\newtheorem{theo}{Theorem}
\newtheorem{theorem}{Theorem}
\newtheorem{lemma}{Lemma}
\newtheorem{remark}{Remark}
\newtheorem{proposition}{Proposition}
\newtheorem{corollary}{Corollary}
\title{The role of forward self-similar solutions in the Cauchy problem for semi-linear heat equations with exponential nonlinearity }
\author{Daesu Jeong  \\
Graduate School of Mathematics, Nagoya University \\
 JAPANESE
ADDRESS: Furocho, Chikusaku, Nagoya, Japan}
\date{\empty}
\begin{document}

\section*{Abstract}
We consider the Cauchy problem for semi-linear heat equations with exponential nonlinearity. The main purpose of this paper is to prove the existence of solutions lying on the borderline between global existence and blow-up infinite time. The existence has been shown for semi-linear heat equations with power type nonlinearity. We explain the main strategy to prove the existence. By using the definition of exponential function, we approximate the solution to exponential type equation by that of power type equation. Then we can use directly the knowledge for power type equation.
\\
\\
\maketitle

\section{Introduction}
In this paper, we consider the Cauchy problem: 
\begin{equation}\label{Cauchy problem}
\begin{cases}
u_t-\Delta u=e^u, &  (x,t)\in{\mathbb{R}^N \times (0,\infty)}, \\
u(x,0)=u_0(x), & x\in{\mathbb{R}^N},
\end{cases}
\end{equation}
where $N\geq1$ and $u_0$ is a continuous initial function. We will study the blow-up problem for (\ref{Cauchy problem}). We say that the solution $u$ to (\ref{Cauchy problem}) blows up in finite time if  there exists $T(u_0)<\infty$ such that $u\in{C^{2,1}(\mathbb{R}^N\times(0,T))\cap C(\mathbb{R}^N\times[0,T))}$ is a unique classical solution to (\ref{Cauchy problem}) which is finite in $\mathbb{R}^N\times[0,T(u_0))$ and satisfies 
$$
\limsup_{t\nearrow T(w_0)}\sup_{x\in{\mathbb{R}^N}}u(x,t)=+\infty.
$$
We say that $u$ is a local solution if $u\in{C^{2,1}(\mathbb{R}^N\times(0, \infty))\cap C(\mathbb{R}^N\times[0,\infty))}$ is a unique classical solution to (\ref{Cauchy problem}) which is finite in $\mathbb{R}^N\times[0,\infty)$. It is known that the initial function $u_0$ has to decay to $-\infty$ as $|x|\to\infty$ for the global solution to exist. Tello \cite{Tello} showed that problem (\ref{Cauchy problem}) has a global solution if there exist constants $\varepsilon\in{(0,2)}$ and $C>0$ such that 
\begin{equation}\label{assumption}
-Ce^{|x|^{2-\varepsilon}}\leq u_0(x)\leq C, \quad x\in{\mathbb{R}^N}.
\end{equation}
Throughout this paper, we assume the condition (\ref{assumption}). In this paper, we are interested in the existence of solution to (\ref{Cauchy problem}) lying on the borderline between global existence and blow-up in finite time.  \\
We introduce some known results for a semi-linear heat equation with power type nonlinearity. We consider the Cauchy problem:
\begin{equation}\label{Power type}
\begin{cases}
u_t-\Delta u = u^p, & (x,t)\in{\mathbb{R}^N\times(0,\infty)} \\
u(x,0)=u_0(x) & x\in{\mathbb{R}^N} 
\end{cases}
\end{equation}
where $u_t=\frac{\partial}{\partial t}u, \ \Delta u= \sum_{i=1}^{i=N}\frac{{\partial}^2}{{\partial x_i}^2} u, \ p>1$ and $u_0$ is a non-negative and bounded continuous initial function. It is well known that  the exponent $p_F:=(N+2)/N$ which is called the Fujita exponent, plays an important role in the existence of global solution of (\ref{Power type}). In fact, If $1<p\leq p_F$ then non-trivial non-negative solutions must blow-up in finite time. On the other hand, if $p>p_F$, there exist global solutions for suitable small initial data. The existence of global solution to problem (\ref{Power type}) strongly depends on the decay rate of initial function $u_0$ at $x=\infty$. In fact, Fujita \cite{Fujita} showed that (\ref{Power type}) has a global solution if  $u_0$ has the form of a small multiple of Gaussian, which decays exponentially at $x=\infty$. Weissler \cite{Weissler2} showed that (\ref{Power type}) has global solutions if $u_0$ has polynomial decay at $x=\infty$. Lee and Ni \cite{Lee} showed that the borderline decay rate of $u_0$ is to be $|x|^{-2/(p-1)}$ at $x=\infty$. In order to study the borderline decay rate, we consider the stationary problem of (\ref{Power type})  ,that is, positive solutions $u$ to the equation 
\begin{equation}\label{stationary problem of power type}
\Delta u + u^p=0 \quad \mbox{in} \ \mathbb{R}^N,
\end{equation}
where $N\geq3$. When $p>N/(N-2)$, equation (\ref{stationary problem of power type}) has a singular solution of the form:
$$
u^*(x):=l^*|x|^{-\frac{2}{p-1}}, \quad l^*:=\biggl(\frac{2}{p-1}\biggl(N-2-\frac{2}{p-1}\biggr)\biggr)^{1/(p-1)}.
$$
When $p\geq(N+2)/(N-2)$, equation (\ref{stationary problem of power type}) has one parameter family of radially symmetric regular solutions $\{u_{\alpha}\}_{\alpha}$ with initial condition $u_{\alpha}(0)=\alpha>0$, where every $u_{\alpha}$ satisfy $\lim_{|x|\to\infty}|x|^{\frac{2}{p-1}}u_{\alpha}(|x|)=L$ and their stability was studied in \cite{Gui}. Define the exponent $p_{JL}$ by 
$$
p_{JL}=\begin{cases} 
\infty, & 3\leq N\leq 10, \\
\displaystyle
1+\frac{4}{N-4-2\sqrt{N-1}}, & N\geq11.
\end{cases}
$$
This exponent $p_{JL}$ which is called the Joseph-Lundgren exponent plays an important role  in the stability of radially symmetric stationary solutions of (\ref{Power type}). \par
The equation in (\ref{Power type}) is invariant under the similarity transform 
$$ 
u_{\lambda}(x,t)={\lambda}^{2/(p-1)}u(\lambda x, \lambda^2t), \quad \mbox{for \ all} \ \lambda>0.
$$ 
In particular, a solution $u$ is said to be self-similar if 
\begin{equation}\label{self-similar solution}
u(x,t)={\lambda}^{2/(p-1)}u(\lambda x, \lambda^2t), \quad \mbox{for \ all} \ \lambda>0.
\end{equation}
We call the solution $u$ to (\ref{Power type}) the forward self-similar solution if $u$ is of the form:

\begin{equation}\label{forward self-similar solution}
u(x,t)=t^{-1/(p-1)}\varphi(x/\sqrt{t})
\end{equation}
where $\varphi$ satisfies the elliptic equation 
\begin{equation}\label{v}
\Delta \varphi+\frac{1}{2}x\cdot\nabla \varphi+\frac{1}{p-1}\varphi+\varphi^p=0 \ \mbox{in} \ \mathbb{R}^N.
\end{equation}
Such forward self-similar solutions are useful tools to describe the large time behavior of the solution to (\ref{Power type}). 
In particular, if $\varphi=\varphi(r), r=|x|,$ then $\varphi$ satisfies $\varphi'(0)=0$ and 
\begin{equation}\label{radial solution}
\varphi''+\biggl(\frac{N-1}{r}+\frac{r}{2}\biggr)\varphi'+\frac{1}{p-1}\varphi+\varphi^p=0 \quad \mbox{for} \ r>0.
\end{equation}
Then we can use ODE theory in investigating forward self-similar solutions.
We are interested in positive solutions $\varphi$ to (\ref{radial solution}) satisfying $\varphi'(0)=0$ and 
\begin{equation}\label{l}
\lim_{r\to\infty}r^{2/(p-1)}\varphi(r)=l
\end{equation}
with some $l>0$. For each $l>0$, we introduce the solution set 
\begin{equation}\label{S_l}
S_l=\{ \varphi\in{C^2[0,\infty)}: \varphi>0 \mbox{\ is a solution to (\ref{radial solution}) satisfying $\varphi'(0)=0$ and (\ref{l})}\}.
\end{equation}
We call $\underline{\varphi}_{l}$ a minimal solution of $S_l$ if $\underline{\varphi}_l\leq \varphi$ for all $\varphi\in{S_l}$. Naito \cite{Naito1} showed the existence of a minimal solution of $S_l$  by the comparison principle.
\begin{theo}[Naito \cite{Naito1}]\label{theoremA}
 Let $S_l$ be defined by \rm{(\ref{S_l})}. If $S_l\neq\emptyset,$ then $S_l$ has a minimal solution.
\end{theo}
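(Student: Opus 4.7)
My plan is to construct $\underline{\varphi}_l$ as the pointwise monotone limit of a sequence in $S_l$ obtained by shrinking the initial value at $r=0$. The key structural ingredient is a non-crossing lemma saying that the (in principle partial) pointwise order on $S_l$ is actually a total order determined entirely by the value at the origin. Concretely, if $\varphi_1,\varphi_2\in S_l$ with $\varphi_1(0)<\varphi_2(0)$, I claim $\varphi_1<\varphi_2$ on $[0,\infty)$. Setting $w:=\varphi_2-\varphi_1$ and writing $\varphi_2^p-\varphi_1^p=c(r)w$ with $c(r)=p\int_0^1(\varphi_1+\theta w)^{p-1}\,d\theta\ge0$, $w$ solves a linear homogeneous ODE. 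If $r_0>0$ were the first zero of $w$, then $w\ge0$ on $[0,r_0]$, so $w'(r_0)\le0$; the case $w'(r_0)<0$ is ruled out because the ODE at $r_0$ forces $w''(r_0)=-\bigl(\frac{N-1}{r_0}+\frac{r_0}{2}\bigr)w'(r_0)>0$, incompatible with $w$ attaining a zero from above, and the case $w'(r_0)=0$ is ruled out by uniqueness for the linear equation (which would give $w\equiv0$, contradicting $w(0)>0$).

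Next I would set $\alpha^{*}:=\inf\{\varphi(0):\varphi\in S_l\}$ and pick $\varphi_n\in S_l$ with $\varphi_n(0)\downarrow\alpha^{*}$. By the non-crossing lemma the sequence $\{\varphi_n\}$ is pointwise decreasing, so $\underline{\varphi}_l(r):=\lim_n\varphi_n(r)$ exists and is nonnegative. Continuous dependence of the radial IVP \eqref{radial solution} with data $\varphi(0)=\alpha_n,\varphi'(0)=0$ on initial data yields $C^2_{\mathrm{loc}}$ convergence to the IVP solution at $\alpha^{*}$; hence $\underline{\varphi}_l$ is itself a $C^2$ solution of \eqref{radial solution} with $\underline{\varphi}_l'(0)=0$, provided $\alpha^{*}>0$. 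The bound $\alpha^{*}>0$ follows because $\alpha^{*}=0$ would produce the trivial limit, inconsistent with the nontrivial asymptotic $r^{2/(p-1)}\varphi_n\to l$ inherited at large $r$ (the $\varphi_n$ are uniformly bounded below by $\tfrac{l}{2}r^{-2/(p-1)}$ on a common exterior, so the limit cannot vanish identically). Positivity on $[0,\infty)$ then follows once the prescribed asymptotic is established, since a zero of $\underline{\varphi}_l$ together with the ODE would force the solution to go negative, contradicting a positive asymptotic.

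The genuinely delicate step, and the one I expect to be the main obstacle, is verifying the asymptotic condition $\lim_{r\to\infty}r^{2/(p-1)}\underline{\varphi}_l(r)=l$ in the limit. The upper bound $\limsup_{r\to\infty}r^{2/(p-1)}\underline{\varphi}_l(r)\le l$ is immediate from $\underline{\varphi}_l\le\varphi_n$. The matching lower bound, however, cannot be read off from pointwise monotone convergence because the decay constant could a priori drop in the limit. To pin it down I would rely on an a priori lower bound for positive global solutions of \eqref{radial solution}, typically obtained by comparison with the singular solution $l^{*}|x|^{-2/(p-1)}$ and/or an energy (Pohozaev-type) argument that shows the shooting map $\alpha\mapsto\lim_{r\to\infty}r^{2/(p-1)}\psi_\alpha(r)$ is continuous at $\alpha^{*}$; combined with the fact that each $\varphi_n$ has asymptotic constant exactly $l$, this yields the reverse inequality. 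Once the asymptotic is confirmed, $\underline{\varphi}_l\in S_l$; minimality $\underline{\varphi}_l\le\varphi$ for every $\varphi\in S_l$ is a direct consequence of the non-crossing lemma of Step~1 applied to $\underline{\varphi}_l=\psi_{\alpha^{*}}$ and any $\varphi=\psi_{\alpha}$ with $\alpha\ge\alpha^{*}$.
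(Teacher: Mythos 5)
Note first that the paper itself contains no proof of this statement: Theorem \ref{theoremA} is quoted from Naito's work, and the only indication given is that the minimal solution is obtained ``by the comparison principle,'' i.e.\ by a monotone iteration from below (solving the linear problem with the prescribed decay $l\,r^{-2/(p-1)}$ and repeatedly inserting the previous iterate into the nonlinearity, each iterate being dominated by every element of $S_l$). Your shooting-parameter approach is a genuinely different route, but as written it fails at its foundation. In your non-crossing lemma the case $w'(r_0)<0$ is not ruled out by the argument you give: the conditions $w(r_0)=0$, $w'(r_0)<0$, $w''(r_0)>0$ are perfectly compatible with $w>0$ on $[0,r_0)$ (take $w(r)=(r-r_0)^2-\varepsilon(r-r_0)$ near $r_0$: it is positive just to the left of $r_0$, vanishes at $r_0$ with negative slope, and has positive second derivative there). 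A positive second derivative at a transversal zero does not prevent the function from reaching that zero from above; it only governs what happens afterwards. Indeed, solutions of the initial value problem for (\ref{radial solution}) with different initial heights do intersect in general --- this is precisely the phenomenon that Lemma \ref{lemma9} and Lemma \ref{lemma10} of this paper (and Naito's corresponding lemmas) describe and exploit --- so any ordering statement for elements of $S_l$ must use the shared asymptotics at infinity globally (say, a Wronskian or Sturm-type identity on $[r_0,\infty)$ combined with the decay), not a local computation at the first crossing. Since the total ordering of $S_l$ by the value at the origin underpins every later step of your plan (monotonicity of the approximating sequence, identification of the limit, and the final minimality), the proof does not stand without repairing this.

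Second, even granting the ordering, the step you yourself flag as delicate remains genuinely open. The assertion that the $\varphi_n$ are ``uniformly bounded below by $\tfrac{l}{2}r^{-2/(p-1)}$ on a common exterior'' is exactly the uniformity that pointwise monotone convergence does not supply, and continuity of the shooting map $\alpha\mapsto\lim_{r\to\infty}r^{2/(p-1)}\psi_\alpha(r)$ at $\alpha^{*}$ is nontrivial: the limit need not exist for nearby $\alpha$, and the map cannot be monotone (otherwise $S_l$ would never contain two distinct elements, which is the very situation this paper is concerned with). The comparison-principle construction sidesteps both difficulties --- the iterates are squeezed between the linear profile and an arbitrary element of $S_l$, so the asymptotic constant is controlled from the start --- which is presumably why the cited proof takes that route.
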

Naito \cite{Naito3} also showed the following results.
\begin{theo}[Naito \cite{Naito3}]\label{theorem for power nonlinearity}
Let $p_F<p<p_{JL}.$ Assume that there exists a non minimal solution $\varphi_l$ of $S_l.$ Define a self-similar solution $u_l$ by 
\begin{equation}\label{thm-1}
{u}_l(x,t)=t^{-\frac{1}{(p-1)}}{\varphi}_l\biggl(\frac{|x|}{\sqrt{t}}\biggr).
\end{equation}

\begin{enumerate}
\item If $u_0(x)\geq u_l(x,t_0)$ and $u_0(x)\not\equiv u_l(x,t_0) \ \mbox{for} \ x\in{\mathbb{R}^N}$ with some $t_0>0$, then the solution $u$ to \rm{(\ref{Power type})}  blows up in finite time. 
\item If $u_0(x)\leq u_l(x,t_0)$ and $u_0(x)\not\equiv  u_l(x,t_0) \ \mbox{for} \ x\in{\mathbb{R}^N}$ with some $t_0>0$, then  the solution $u$ to \rm{(\ref{Power type})} exists globally in time. 
%Furthermore, if $u_0$ satisfies (\ref{l}), then the solution $u$ satisfies 
%$$
%\lim_{t\to\infty}t^{\frac{1}{(p-1)}}\|u(\cdot, t)-\underline{u}_l(\cdot,t)\|_{L^{\infty}(\mathbb{R}^N)}=0.
%$$
\end{enumerate}
\end{theo}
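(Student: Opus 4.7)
The plan is to pass to similarity variables tailored to $u_l$, in which $u_l$ becomes the stationary profile $\varphi_l$ and the hypotheses turn into one-sided orderings with it; the dichotomy then follows from the instability of $\varphi_l$ from above encoded in its non-minimality. Setting $y = x/\sqrt{t+t_0}$, $s = \log(t+t_0)$, and $w(y,s) = (t+t_0)^{1/(p-1)} u(x,t)$, one finds
\begin{equation*}
w_s = \Delta w + \tfrac12\, y\cdot\nabla w + \tfrac{1}{p-1}\, w + w^p,
\end{equation*}
the self-similar solution $u_l(\cdot,\cdot+t_0)$ becomes the stationary profile $\varphi_l(y)$, and the hypotheses translate to $w(\cdot,s_0)\gtreqless\varphi_l$ at $s_0 := \log t_0$, with strict inequality somewhere. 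The parabolic comparison principle propagates this ordering on the existence interval of $w$. For statement (ii), $w\le\varphi_l$ combined with the boundedness of $\varphi_l$ (continuous, $|y|^{2/(p-1)}\varphi_l\to l$) yields $u(x,t)\le (t+t_0)^{-1/(p-1)}\|\varphi_l\|_\infty$, and standard parabolic theory then gives global existence.

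For statement (i), the strong maximum principle forces $\tilde w := w - \varphi_l > 0$ strictly for $s > s_0$, and convexity of $z\mapsto z^p$ gives the supersolution inequality $\tilde w_s \ge \mathcal{L}\tilde w$ for the linearization
\begin{equation*}
\mathcal{L} := \Delta + \tfrac12\, y\cdot\nabla + \tfrac{1}{p-1} + p\varphi_l^{p-1}.
\end{equation*}
I would view $\mathcal{L}$ as self-adjoint on the weighted Ornstein--Uhlenbeck space $L^2(\mathbb{R}^N, e^{|y|^2/4}\,dy)$ and show that its principal eigenvalue $\lambda_1$ is strictly positive, using the non-minimality of $\varphi_l$: by Theorem~A there exists $\underline{\varphi}_l\in S_l$ with $\underline{\varphi}_l\le\varphi_l$, $\underline{\varphi}_l\not\equiv\varphi_l$, and subtracting the stationary equations satisfied by $\varphi_l$ and $\underline{\varphi}_l$ shows that $\psi := \varphi_l - \underline{\varphi}_l$ is a nontrivial nonnegative supersolution
\begin{equation*}
\mathcal{L}\psi = \bigl(p\varphi_l^{p-1} - c(y)\bigr)\psi \ge 0,\qquad c(y) := \frac{\varphi_l^p - \underline{\varphi}_l^p}{\psi} \le p\varphi_l^{p-1},
\end{equation*}
strictly on the set where $\underline{\varphi}_l < \varphi_l$. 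Pairing $\psi$ against the positive principal eigenfunction of $\mathcal{L}$ in the weighted inner product and using self-adjointness forces $\lambda_1 > 0$. Projecting $\tilde w_s \ge \mathcal{L}\tilde w$ onto that eigenfunction then shows $\tilde w$ cannot stay bounded in $s$, and the superlinear term $w^p$ drives finite-$s$ blow-up via a Kaplan-type Riccati inequality $A' \ge \lambda_1 A + cA^p$ (from Jensen's inequality applied to the convexity surplus $w^p - \varphi_l^p - p\varphi_l^{p-1}\tilde w$) or, equivalently, via descent of the Lyapunov functional associated with the equation in similarity variables. Finite-$s$ blow-up of $w$ then translates to finite-time blow-up of $u$.

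The main obstacle is the rigorous closure of the blow-up step. Eigenfunctions of $\mathcal{L}$ in $L^2(e^{|y|^2/4}\,dy)$ decay only like $e^{-|y|^2/4}$, so the Kaplan projection $\int\tilde w\,\phi_1 e^{|y|^2/4}\,dy$ is marginally convergent, and $\psi$ itself need not lie in this weighted space; the spectral pairing and projection therefore require careful treatment, typically via truncation to balls $B_R$ with Dirichlet principal eigenfunctions followed by a limit passage, and integral growth must then be upgraded to pointwise blow-up by parabolic regularity. The range $p_F < p < p_{JL}$ enters essentially here, ensuring that $\varphi_l$ has the appropriate decay at infinity, that the linearized spectrum is discrete with a simple positive principal eigenvalue, and that non-minimality truly encodes strict instability rather than marginal stability.
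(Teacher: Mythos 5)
First, note that the paper does not prove this statement at all: it is Theorem~B, quoted from Naito \cite{Naito3} as a known result, and the paper's own contribution is the exponential analogue (Theorem~\ref{main theorem}), proved in Sections~4--5 by a route quite different from yours. That route is an intersection-comparison / monotone-dynamics argument: pass to similarity variables, build a radial intermediate weak sub- or supersolution by gluing a shooting solution $\varphi(\cdot;\alpha)$ to $\varphi_l$ at their first crossing (Proposition~\ref{prop8}, Lemma~\ref{lemma11}), use monotonicity in $s$ of solutions emanating from such data (Lemma~\ref{lemma12}) to force convergence to a regular or singular stationary profile lying above $\varphi_l$ (Lemma~\ref{lemma13}), and then derive a contradiction from Liouville-type nonexistence statements for profiles above $\varphi_l$ (Proposition~\ref{prop7}(ii),(iv)), which are themselves proved by Sturmian zero-counting (Lemma~\ref{lemma9}), not by spectral theory. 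Your part~(ii) coincides with the paper's, and your idea of extracting $\lambda_1>0$ from non-minimality by pairing the principal eigenfunction with $\psi=\varphi_l-\underline{\varphi}_l$ is a legitimate and attractive alternative in spirit.

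The genuine gap is the closure of the blow-up step, which you yourself flag but do not resolve, and it is not merely technical. (a) The Kaplan functional $A(s)=\int \tilde w\,\phi_1 e^{|y|^2/4}\,dy$ is divergent, not ``marginally convergent'': the $L^2(e^{|y|^2/4})$-eigenfunction with eigenvalue $\lambda_1$ behaves like $|y|^{2(\lambda_1+\frac{1}{p-1})-N}e^{-|y|^2/4}$, so $\phi_1 e^{|y|^2/4}$ is not integrable against perturbations $\tilde w$ that are merely bounded (which is all the hypothesis $u_0\ge u_l(\cdot,t_0)$ gives), precisely because $\lambda_1>0$; the truncation-to-$B_R$ fix changes $\lambda_1$ into a Dirichlet eigenvalue whose positivity must be re-established uniformly in $R$, and this is where the real work lies. (b) The Riccati inequality $A'\ge \lambda_1 A+cA^p$ rests on the pointwise bound $(a+b)^p-a^p-pa^{p-1}b\ge c\,b^p$, which is false for $1<p<2$ (the surplus is only of order $a^{p-2}b^2\ll b^p$ when $b\ll a$), and $p\in(p_F,2)$ is allowed here for every $N\ge 3$. (c) Without the superlinear term the linear estimate $A'\ge\lambda_1 A$ only yields exponential growth of $A$, which is compatible with infinite-time grow-up rather than finite-time blow-up, so the conclusion does not follow from the eigenvalue computation alone. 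Finally, in your sketch the hypothesis $p<p_{JL}$ is never actually used ($\lambda_1>0$ is derived from non-minimality alone), whereas in Naito's argument it enters through the structure of $S_l$ and the exclusion of singular profiles above $\varphi_l$; an argument in which a stated hypothesis plays no role should be treated as a warning sign that a case (here, the possible convergence of the rescaled flow to a singular steady state, or marginal behavior at $p=p_{JL}$) has been missed.
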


The purpose of this paper is to prove the same conclusions of Theorem \ref{theoremA} and B to problem (\ref{Cauchy problem}). We consider stationary solutions, that is, solutions to elliptic equation;
\begin{equation}\label{equation with exponential nonlinearity}
-\Delta u=e^{u}.
\end{equation}
For $N\geq3$, the function $u_*$ defined by
$$
u_*(x):=-2\log{|x|}+\log{(2N-4)},
$$
is a singular solution to problem (\ref{equation with exponential nonlinearity}). Fujishima \cite{Fujishima} showed that the decay rate $-2\log{|x|}$ at space infinity gives the critical decay rate for the existence of global solutions to (\ref{Cauchy problem}). In this paper we are concerned with the case where initial function $u_0$  decays to $-2\log{|x|}$ at space infinity, that is, 
$$
\lim_{|x|\to\infty}(2\log{|x|}+u_0(x))=L
$$ 
with $L\in{\mathbb{R}}$.
%we obtain the optimal bounded of $\lim_{|x|\to\infty}(2\log{|x|}+w_0(x))$ which classifies the global solutions and blowing up solutions for problem (\ref{Power type}) by studying the structure of forward self-similar solutions for (\ref{equation with exponential nonlinearity}). \\
Note that the equation in (\ref{Cauchy problem}) does not have scale invariance, but the equation in (\ref{Cauchy problem}) is invariant under the transformation 
$$
u_{\lambda}(x,t)=\log{\lambda^2}+u(\lambda x, \lambda^2 t) \quad for \  \lambda>0.
$$
The function $u=u(x,t)$ is called a self-similar solution to the equation in (\ref{Cauchy problem}) if $u$ is of the form
\begin{equation}\label{self-similar with exp}
u(x,t)=-\log{t}+\varphi\biggl(\frac{x}{\sqrt{t}}\biggr),
\end{equation}
where $\varphi(y):=u(y,1)$ satisfies the elliptic equation
\begin{equation}\label{ee}
\Delta \varphi + \frac{1}{2} y\cdot\nabla \varphi+ e^{\varphi}+1= 0 \quad \mbox{in} \ \mathbb{R}^N.
\end{equation}
In particular, if $\varphi=\varphi(r),r=|y|,$ then $\varphi$ satisfies  
\begin{equation}\label{e}
\begin{cases} \displaystyle
\varphi''+\biggl(\frac{N-1}{r}+\frac{r}{2}\biggr)\varphi'+e^{\varphi}+1=0, & r>0, \\
\varphi'(0)=0 
\end{cases}
\end{equation}
We are interested in solutions $\varphi$ to (\ref{e}) satisfying 
\begin{equation}\label{L}
\lim_{r\to\infty}(2\log{r}+\varphi(r))=L
\end{equation}
with $L\in{\mathbb{R}}$. For any $L\in{\mathbb{R}}$, we introduce the solution set
\begin{equation}\label{S_L}
S_L:=\bigl\{\varphi\in{C^2([0,\infty))}: \varphi \ \mbox{is a solution to (\ref{e}) satisfying} \   (\ref{L}) \bigr\}.
\end{equation}
%Fujishima \cite{Fujishima} showed the following results.
%\begin{theorem}
%Let $N\geq3$. Then the constant $L^*$ defined by 
%$$
%L^*:=\sup\{L\in{\mathbb{R}}:S_L\neq\emptyset\}
%$$
%is finite and there holds
%$$
%L^*>\log{(2N-4)} \quad if \ 3\leq N\leq 9, \qquad L^*=\log{(2N-4)} \quad if \ N\geq 10.
%$$
%Furthermore, $S_{L^*}\neq\emptyset \ if \ 3\leq N\leq9$ and $S_{L^*}=\emptyset \ if \ N\geq10$.
%\end{theorem}
Then we are in position to state our main theorems:
\begin{theorem}\label{theorem5}
If $S_L\neq\emptyset,$ then there exists a minimal solution of $S_L$.
\end{theorem}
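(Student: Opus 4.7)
I would imitate Naito's shooting-based proof of Theorem~A in the exponential setting, invoking the power-type approximation announced in the abstract for the key step. Since (\ref{e}) is an ODE with $\varphi'(0)=0$, each $\varphi\in S_L$ is uniquely determined by $\alpha:=\varphi(0)$ (the regular singularity at $r=0$ is handled by the standard Taylor expansion). Write $\varphi_\alpha$ for the maximal solution and set $A:=\{\alpha\in\mathbb{R}:\varphi_\alpha\in S_L\}$; we have $A\neq\emptyset$ by hypothesis, and the theorem reduces to showing that $\alpha_*:=\inf A$ is finite, attained, and produces the pointwise minimum of $S_L$.

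For boundedness of $A$ from below, I note that for $\alpha\ll 0$ the term $e^{\varphi_\alpha}$ is negligible on any fixed compact interval, so $\varphi_\alpha$ shadows the solution of the linear equation obtained by dropping $e^{\varphi}$ from (\ref{e}); this linear equation is integrable in closed form via the integrating factor $\mu(r)=r^{N-1}e^{r^2/4}$, and the explicit formula shows that the shadow cannot asymptote to $L-2\log r$. For closedness, given $\alpha_n\downarrow\alpha_*$ in $A$, continuous dependence yields $\varphi_{\alpha_n}\to\varphi_{\alpha_*}$ in $C^2_{\mathrm{loc}}([0,\infty))$; to preserve (\ref{L}) under the limit I would linearize (\ref{e}) around the profile $L-2\log r$, obtaining
\[
\varepsilon''+\Big(\tfrac{N-1}{r}+\tfrac{r}{2}\Big)\varepsilon'+\frac{e^{L}e^{\varepsilon}-2(N-2)}{r^{2}}=0,\qquad \varepsilon(r):=\varphi(r)+2\log r-L,
\]
from which uniform-in-$n$ tail barriers $|\varepsilon_{\alpha_n}(r)|\le\omega(r)\to 0$ can be extracted by variation of parameters against $\mu$.

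For minimality, which is the heart of the proof, I would invoke the power-type approximation. For each large $p$ the substitution $\Phi_p(r):=1+(\varphi(r)-\log p)/p$ (valid on intervals where $\Phi_p>0$, which exhaust $[0,\infty)$ as $p\to\infty$) turns (\ref{e}), after multiplication by $p$, into an $O(1/p)$-perturbation of the radial power-type profile equation (\ref{radial solution}), because $p\Phi_p^{p}\to e^{\varphi}$ and $\tfrac{p}{p-1}\Phi_p\to 1$; at the same time, the exponential tail (\ref{L}) translates into the power-type tail (\ref{l}) with $l=l_p:=1+(L-\log p)/p\to 1$. Applying Theorem~A to each $S_{l_p}$ produces minimal power-type profiles $\underline{\Phi}^{(p)}\le\Phi_p$ for every competitor $\varphi\in S_L$; undoing the substitution and letting $p\to\infty$, combined with the tail control of the previous step, identifies $\inf_{\varphi\in S_L}\varphi(\cdot)$ with a $C^2$ solution of (\ref{e}), necessarily equal to $\varphi_{\alpha_*}$.

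The main obstacle lies precisely in this last step. The substitution $\varphi\leftrightarrow\Phi_p$ intertwines (\ref{e}) and (\ref{radial solution}) only up to $O(1/p)$ errors, so passing to the limit in the power-type minimum requires quantitative, uniform-in-$r$ error control all the way out to $r=\infty$. Such uniform control is unavoidable: because $e^\varphi$ is strictly increasing in $\varphi$, two distinct solutions of (\ref{e}) with the same asymptotic (\ref{L}) could a priori cross, and ruling this out is what the power-type approximation ultimately accomplishes.
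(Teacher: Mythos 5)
There is a genuine gap at what you yourself call ``the heart of the proof,'' and it is not a technicality you could fix by more careful error estimates: the substitution $\Phi_p(r)=1+(\varphi(r)-\log p)/p$ does not produce an element of $S_{l_p}$, so Theorem~A cannot be invoked. Two things go wrong. First, $\Phi_p$ is only an $O(1/p)$-approximate solution of the power-type profile equation, whereas Theorem~A (Naito's minimality result) is a statement about the set of \emph{exact} solutions; there is no version of it for approximate solutions, and the crossing/comparison arguments behind it break under perturbation of the equation. Second, the tail translation is false: if $\varphi(r)=-2\log r+L+o(1)$, then for \emph{fixed} $p$ one has $\Phi_p(r)=1+(-2\log r+L-\log p)/p\to-\infty$ logarithmically as $r\to\infty$, while membership in $S_{l_p}$ requires $r^{2/(p-1)}\Phi_p(r)\to l_p$, i.e.\ decay like $r^{-2/(p-1)}$; the product $r^{2/(p-1)}\Phi_p(r)$ in fact diverges to $-\infty$. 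You flag the need for ``uniform-in-$r$ error control all the way out to $r=\infty$'' as the main obstacle, but you do not supply it, and with this substitution it cannot be supplied.

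The paper avoids this by running the approximation in the opposite direction: instead of deforming a given exponential profile into an approximate power-type profile, it solves the approximate ODE ${\psi^{(n)}}''+(\frac{N-1}{r}+\frac r2){\psi^{(n)}}'+\frac{1}{n-1}\psi^{(n)}+(\psi^{(n)}/n)^n=0$ \emph{exactly} with data $\psi^{(n)}(0)=\alpha+n$, ${\psi^{(n)}}'(0)=0$, proves $\varphi^{(n)}=\psi^{(n)}-n\to\varphi$ locally uniformly (Theorem 3, via Ascoli--Arzel\`a on the integral equation), and proves that the exact asymptotic constants satisfy $\lim_{r\to\infty}r^{2/(n-1)}\psi^{(n)}(r)=L^{(n)}+n$ with $L^{(n)}\to L$ (Theorem 4), the key input being decay estimates for $\psi^{(n)}$ and ${\psi^{(n)}}'$ with constants \emph{independent of $n$} (Proposition 1, via an energy function). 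Because $\psi^{(n)}$ genuinely lies in $S^{(n)}_{L^{(n)}+n}$, Naito's Theorem~A applies and yields exact minimal solutions $\underline{\psi}^{(n)}$; one then shows $\{\underline{\psi}^{(n)}(0)-n\}$ is bounded, extracts a locally uniform limit $\underline{\varphi}$, and verifies $\underline{\varphi}\in S_L$ and $\underline{\varphi}\le\varphi$ by passing to the limit in the same integral identities. Your shooting-set framing ($\alpha_*=\inf A$, boundedness below, closedness) is also not how the paper argues and is itself incomplete -- minimality of $S_L$ in the pointwise order does not follow from minimality of the shooting parameter without an additional non-crossing argument -- but the decisive defect is the unjustified appeal to Theorem~A for a function that is neither a solution of (\ref{radial solution}) nor asymptotic to $l_pr^{-2/(p-1)}$.
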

\begin{theorem}\label{main theorem}
Let $3\leq N\leq9.$ Assume that there exists a non-minimal solution $\varphi_L$ of $S_L$. Define a self-similar solution $u_L$ by
\begin{equation}\label{thm}
u_L(x,t)=-\log{t}+{\varphi}_L\biggl(\frac{|x|}{\sqrt{t}}\biggr).
\end{equation} 

\begin{enumerate}
\item If $u_0(x)\geq u_L(x,t_0)$ and $ u_0(x)\not\equiv u_L(x,t_0)$ for $x\in{\mathbb{R}^N}$ with some $t_0>0$, then the solution $u$ to \rm{(\ref{Cauchy problem})} blows up in finite time. 
\item If $u_0(x)\leq u_L(x,t_0)$ and $ u_0(x)\not\equiv u_L(x,t_0)$ for $x\in{\mathbb{R}^N}$ with some $t_0>0$, then the solution $u$ to \rm{(\ref{Cauchy problem})} exists globally in time.
\end{enumerate}
\end{theorem}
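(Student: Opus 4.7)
The strategy, following the abstract, is to approximate the exponential nonlinearity by a power-type one and to reduce Theorem \ref{main theorem} to Theorem \ref{theorem for power nonlinearity}. The algebraic key is that for any $p>1$ the change of variable $v=1+u/p$ converts the ``quasi-power'' equation $u_t-\Delta u=(1+u/p)^p$ into $v_t-\Delta v=v^p/p$, which reduces to the standard form $w_t-\Delta w=w^p$ of \eqref{Power type} via the spatially-homogeneous rescaling $w=p^{-1/(p-1)}v$. The pointwise inequality $(1+u/p)^p\le e^u$ (immediate from $\log(1+x)\le x$) shows that any solution of \eqref{Cauchy problem} is a super-solution of the quasi-power equation with the same initial datum, so the parabolic comparison principle will transfer blow-up from the power regime to the exponential regime.

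The first substantive step is to associate to $\varphi_L$ a non-minimal self-similar profile for the power problem. I would search for a power self-similar profile of the form $\psi_p(r)=1+\chi_p(r)/p$ with $\chi_p\to\varphi_L$ as $p\to\infty$. Substituting the ansatz into the self-similar ODE arising from $v_t-\Delta v=v^p/p$ yields
\[
\chi_p''+\Bigl(\tfrac{N-1}{r}+\tfrac{r}{2}\Bigr)\chi_p'+\tfrac{p}{p-1}+\tfrac{\chi_p}{p-1}+\Bigl(1+\tfrac{\chi_p}{p}\Bigr)^{p}=0,
\]
which limits to equation \eqref{e} as $p\to\infty$. A perturbation/fixed-point argument then produces, for each large $p$, an actual solution $\psi_p$ of the power self-similar ODE with $p^{-1/(p-1)}\psi_p\in S_{l_p}$ for some $l_p\to 1$. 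The hypothesis $3\le N\le 9$ forces $p_{JL}=\infty$, so Theorem \ref{theorem for power nonlinearity} applies for every such $p$. To see that $\psi_p$ is \emph{non-minimal} in its solution set, I would argue by contradiction: if $\psi_p$ were minimal for a subsequence $p_k\to\infty$, the inverse transforms $p_k(\psi_{p_k}-1)$ would converge to the minimal element of $S_L$ supplied by Theorem \ref{theoremA}, contradicting the assumed non-minimality of $\varphi_L$.

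For part (i), fix $p$ large and denote by $u_p$ the solution of the quasi-power problem with initial datum $u_0$; set $v_p=1+u_p/p$. The ordering $u_0\ge u_L(\cdot,t_0)$ translates, via the correspondence above and an appropriate time shift $t_0'$, to $v_p(\cdot,0)\ge p^{1/(p-1)}w_{L,p}(\cdot,t_0')$, where $w_{L,p}$ is the power self-similar solution built from $\psi_p$. Theorem \ref{theorem for power nonlinearity}(i) applied to the rescaled power problem then yields finite-time blow-up of $v_p$, hence of $u_p$. Because $u$ is a super-solution of the quasi-power equation with the same datum, parabolic comparison gives $u\ge u_p$ on the joint existence interval, and blow-up of $u_p$ forces blow-up of $u$.

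For part (ii) no approximation is needed: with $t_0>0$ the function $u_L(\cdot,t+t_0)=-\log(t+t_0)+\varphi_L(\cdot/\sqrt{t+t_0})$ is an exact classical solution of \eqref{Cauchy problem} that is uniformly bounded above on $\mathbb{R}^N\times[0,\infty)$, since $\varphi_L\in C^2([0,\infty))$ satisfies \eqref{L}. From $u_0\le u_L(\cdot,t_0)$ with strict inequality somewhere, parabolic comparison yields $u(\cdot,t)\le u_L(\cdot,t+t_0)$ for as long as $u$ exists, ruling out blow-up. The main obstacle throughout is the construction in the second step: producing $\psi_p$ as an \emph{actual} non-minimal element of the power-type solution set (rather than a mere asymptotic ansatz) and controlling the convergence $p(\psi_p-1)\to\varphi_L$ strongly enough to preserve strict inequalities under the transformation. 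This demands a quantitative perturbation analysis of the power self-similar ODE, uniform in $p$, together with a stable version of the limit $(1+u/p)^p\to e^u$.
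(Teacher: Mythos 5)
Your part (ii) is essentially the paper's argument: compare $u$ from above with the exact global solution $u_L(\cdot,t+t_0)$ (equivalently, with the stationary profile $\varphi_L$ in the self-similar variables \eqref{eq101}), and conclude global existence. That part is fine.

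Part (i), however, has a genuine gap at the step where you transfer the ordering hypothesis to the approximating power problem. The threshold $u_L(x,t_0)=-\log t_0+\varphi_L(|x|/\sqrt{t_0})$ behaves like $-2\log|x|$ at spatial infinity by \eqref{L}, so in the critical regime (e.g.\ $u_0\equiv u_L(\cdot,t_0)$ outside a compact set, which is exactly the case the theorem is about) the transformed datum $v_p(\cdot,0)=1+u_0/p$ becomes \emph{negative} for $|x|>e^{cp}$, whereas the forward self-similar solution of \eqref{Power type} built from a profile in $S_{l_p}$ is strictly positive everywhere. Hence the hypothesis $v_p(\cdot,0)\ge u_{l_p}(\cdot,t_0')$ of Theorem \ref{theorem for power nonlinearity}(i) can never be verified, for any $p$ and any time shift; Theorem \ref{theorem for power nonlinearity} also presupposes nonnegative bounded data, which $1+u_0/p$ is not. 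Even on compact sets the difficulty persists: the ordering $u_0\ge u_L(\cdot,t_0)$ is not strict (equality is allowed on large sets), so it cannot survive an approximation $\psi_p\to 1+\varphi_L/p$ that converges only uniformly on compacta with no sign information on the error. Your closing caveat about ``preserving strict inequalities under the transformation'' is precisely the point at which the plan breaks, and no quantitative perturbation estimate fixes the sign problem at spatial infinity.

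The paper circumvents this by using the approximation $(1+u/n)^n\to e^u$ \emph{only at the ODE level}: it transfers Naito's rigidity results for the power profile equation to the limit equation \eqref{e} (Proposition \ref{prop7}: no solution of \eqref{e} can lie strictly above $\varphi_L$, neither a regular one nor a singular one blowing up at $r=0$). The parabolic part of the blow-up proof is then run entirely for the exponential equation in self-similar variables: assuming global existence, one inserts a radially nonincreasing weak subsolution between $\varphi_L$ and $w_0$ (Proposition \ref{prop8}, built by gluing shooting solutions of \eqref{e} via Lemmas \ref{lemma9}--\ref{lemma11}), uses monotonicity in $s$ (Lemma \ref{lemma12}) and convergence of the orbit to a stationary or singular stationary profile (Lemma \ref{lemma13}), and derives a contradiction with Proposition \ref{prop7}(ii) or (iv). If you want to keep your comparison-based outline, you would need to replace the invocation of Theorem \ref{theorem for power nonlinearity} at the parabolic level by this omega-limit argument, or else prove a version of Theorem \ref{theorem for power nonlinearity} for sign-changing data dominating a shifted self-similar solution, which is not available.
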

We remark that the assumption $p_{JL}=\infty$ when $3\leq N\leq10,$ here assumption $p_F<p<p_{JL}$ in Theorem B allows exponential nonlinearity in this case.  In the case $N=10,$ it is known by \cite{Fujishima} that there is no non-minimal solution of $S_L$ for any $L\in{\mathbb{R}}.$ \cite{Fujishima} also says that there exists an $L\in{\mathbb{R}}$ such that $S_L\neq\emptyset$ when $3\leq N\leq9.$ \\

We explain the main strategy to prove Theorem \ref{theorem5} and \ref{main theorem}. We first approximate the solution to equation (\ref{Cauchy problem}) by that of equation (\ref{Power type}) by using the formula 
$$
e^u=\lim_{n\to\infty}\biggl(1+\frac{u}{n}\biggr)^n;
$$
that is, we consider the following approximate equation
\begin{equation}\label{approximation}
u^{(n)}_t-\Delta u^{(n)}=\biggl(1+\frac{u^{(n)}}{n}\biggr)^n \quad \mbox{in} \ \mathbb{R}^N \times (0,\infty).
\end{equation}
Then we can use directly the knowledge for power type nonlinear equation (\ref{Power type}) to induce desired property for exponential type nonlinear equation (\ref{Cauchy problem}).
\\
The paper is organized as follows: In Section 2 we present some preliminary results. In Section 3 we prove the existence of approximate self-similar solution. In Section 4 we investigate properties of solution set $S_L$, in particular we establish the existence of a minimal solution of $S_L$ by using approximate solutions. In section 5, we prove Theorem \ref{main theorem}.
\section{The existence of approximate solutions.}
In this section we consider the non-linear heat equation:
\begin{equation}\label{approximation}
u^{(n)}_t-\Delta u^{(n)}=\biggl(1+\frac{u^{(n)}}{n}\biggr)^n \quad \mbox{in} \ \mathbb{R}^N \times (0,\infty).
\end{equation}
The equation in (\ref{approximation}) is invariant under the transformation:
$$
u^{(n)}_{\lambda}(x,t)=n(\lambda^{2/(n-1)}-1)+\lambda^{2/(n-1)}u^{(n)}(\lambda x, \lambda^2 t) \quad \mbox{for all} \ \lambda>0.
$$
In particular, we call $u^{(n)}$ a self-similar solution when $u^{(n)}=u^{(n)}_{\lambda}$ for all $\lambda>0.$ Forward self-similar solutions are of the form:
\begin{equation}\label{approximated forward self-similar solution}
u^{(n)}(x,t)=n(t^{-1/(n-1)}-1)+t^{-1/(n-1)}\varphi^{(n)}(\frac{x}{\sqrt{t}}),
\end{equation}
where $\varphi^{(n)}$ satisfies elliptic equation
\begin{equation*}
\Delta \varphi^{(n)} + \frac{1}{2} x\cdot\nabla \varphi^{(n)}+ \frac{1}{n-1}(\varphi^{(n)}+n)+\biggl(1+\frac{\varphi^{(n)}}{n}\biggr)^n= 0 \quad \mbox{in} \ \mathbb{R}^N.
\end{equation*}
Note here that $\varphi^{(n)}(r)$ of (\ref{approximated forward self-similar solution}) converges to $\varphi(r)$ of (\ref{self-similar with exp}) as $n\to\infty.$ 
In particular, if $\varphi^{(n)}=\varphi^{(n)}(r), r=|x|,$ then $\varphi^{(n)}$ satisfies 
\begin{equation}\label{n}
\begin{cases}\displaystyle
{\varphi^{(n)}}''+\biggl(\frac{N-1}{r}+\frac{r}{2}\biggr){\varphi^{(n)}}'+\frac{1}{n-1}(\varphi^{(n)}+n)+\biggl(1+\frac{\varphi^{(n)}}{n}\biggr)^n= 0, & r>0, \\
{\varphi^{(n)}}'(0)=0.
\end{cases} 
\end{equation}
We establish that the forward self-similar solution of semi-linear heat equations with exponential nonlinearity is approximated by that of semi-linear heat equations with power type nonlinearity.
\begin{theorem}\label{theorem3}
Let $\varphi_{\alpha}$ be the solution to \rm{(\ref{e})} with $\varphi_{\alpha}(0)=\alpha\in{\mathbb{R}}.$ Then there exists a sequence $\{\varphi^{(n)}_{\alpha}\}_{n\geq 1}$ of (\ref{n}) such that $\varphi^{(n)}_{\alpha}>-n$ and
\begin{equation}\label{sup}
\lim_{n\to\infty}\sup_{0\leq r \leq r_0}| \varphi^{(n)}_{\alpha}(r)-\varphi_{\alpha}(r)|=0 \quad \mbox{for} \  r_0>0.
\end{equation}
\end{theorem}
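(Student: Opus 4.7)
The plan is to define $\varphi^{(n)}_\alpha$ as the unique $C^2$ solution to the initial value problem (\ref{n}) with $\varphi^{(n)}_\alpha(0)=\alpha$, ${\varphi^{(n)}_\alpha}'(0)=0$, and then to establish (\ref{sup}) by a continuous-dependence argument wrapped in a bootstrap. For $n>\max(1,|\alpha|)$ the nonlinearity $(1+\varphi^{(n)}/n)^n$ is well defined near $r=0$, so standard radial ODE theory produces a unique local solution, which can be extended as long as $\varphi^{(n)}_\alpha>-n$. The coefficients of (\ref{n}) converge formally to those of (\ref{e}): one has $\tfrac{1}{n-1}(\varphi^{(n)}+n)=\tfrac{n}{n-1}+\tfrac{\varphi^{(n)}}{n-1}\to 1$ and $(1+\varphi^{(n)}/n)^n\to e^{\varphi^{(n)}}$, the latter uniformly on compact sets in the $\varphi^{(n)}$ variable.

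To turn convergence of coefficients into convergence of solutions on a fixed interval $[0,r_0]$, set $M:=1+\sup_{[0,r_0]}|\varphi_\alpha|$ and $w^{(n)}:=\varphi^{(n)}_\alpha-\varphi_\alpha$. Subtracting (\ref{n}) from (\ref{e}) and applying the mean value theorem to $x\mapsto(1+x/n)^n$, I will derive a linear ODE of the form
\begin{equation*}
{w^{(n)}}''+\Bigl(\tfrac{N-1}{r}+\tfrac{r}{2}\Bigr){w^{(n)}}'+a_n(r)\,w^{(n)}=g_n(r),\qquad w^{(n)}(0)={w^{(n)}}'(0)=0,
\end{equation*}
where $a_n(r)=\tfrac{1}{n-1}+(1+\xi_n(r)/n)^{n-1}$ for some intermediate value $\xi_n(r)$ between $\varphi^{(n)}_\alpha(r)$ and $\varphi_\alpha(r)$, and $g_n(r)$ collects the two explicit residuals $e^{\varphi_\alpha}-(1+\varphi_\alpha/n)^n$ and $1-\tfrac{\varphi_\alpha+n}{n-1}$. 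Both residuals vanish uniformly on $[0,r_0]$ as $n\to\infty$, while on the set where $|w^{(n)}|\le 1$ one has $|\xi_n|\le M$ and hence $a_n\le 2e^M$ independently of $n$. Now the bootstrap: let $r^*_n$ be the supremum of $r\in[0,r_0]$ such that $\varphi^{(n)}_\alpha$ exists on $[0,r]$ and $|w^{(n)}|\le 1$ throughout; after rewriting the ODE in divergence form with the integrating factor $r^{N-1}e^{r^2/4}$ and running a one-dimensional Gronwall estimate, I expect $\|w^{(n)}\|_{C([0,r^*_n])}\le C(r_0,M)\,\|g_n\|_{L^\infty([0,r_0])}$, which tends to $0$. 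For $n$ large, this estimate forces $|w^{(n)}|<1$ strictly on $[0,r^*_n]$, so $r^*_n$ cannot be interior to $[0,r_0]$ and must equal $r_0$; this simultaneously yields the bound $\varphi^{(n)}_\alpha\ge\varphi_\alpha-1\ge-M>-n$ (so the solution survives) and the conclusion (\ref{sup}).

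The main obstacle is the circularity built into the previous paragraph: the Gronwall bound on $w^{(n)}$ depends on a uniform bound on $a_n$, which itself depends on $|w^{(n)}|\le 1$. The bootstrap definition of $r^*_n$ is what closes this loop, and the key quantitative input is the classical uniform convergence $(1+x/n)^n\to e^x$, together with its first $x$-derivative, on any compact interval. A secondary technical point is the singular $(N-1)/r$ coefficient at $r=0$, which the integrating factor $r^{N-1}e^{r^2/4}$ handles cleanly so that the Gronwall inequality stays uniformly bounded as $r\downarrow 0$.
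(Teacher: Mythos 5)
Your proposal is correct in outline, but it proves the convergence by a genuinely different mechanism than the paper. You take the same sequence (the unique solution of the initial value problem (\ref{n}) with $\varphi^{(n)}_{\alpha}(0)=\alpha$, ${\varphi^{(n)}_{\alpha}}'(0)=0$) and run a quantitative continuous-dependence argument: linearize the difference of the two nonlinearities by the mean value theorem, put the resulting equation for $w^{(n)}=\varphi^{(n)}_{\alpha}-\varphi_{\alpha}$ into the integral form with the weight $\rho_N(r)=r^{N-1}e^{r^2/4}$ (using $\rho_N(t)/\rho_N(s)\le 1$ for $t\le s$ to tame the $(N-1)/r$ singularity), and close a Gronwall estimate inside a bootstrap on the set where $|w^{(n)}|\le 1$. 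The paper instead writes $\psi^{(n)}_{\alpha}=\varphi^{(n)}_{\alpha}+n$, derives uniform $C^1$ bounds on $[0,r_0]$ from the integral equation and the inequality $(1+a/n)^n\le e^{a}$, extracts a uniformly convergent subsequence by Arzel\`a--Ascoli, passes to the limit in the integral equation, and identifies the limit with $\varphi_{\alpha}$ by ODE uniqueness. Your route buys an explicit rate ($\|w^{(n)}\|_{C[0,r_0]}\le C(r_0,M)\|g_n\|_{\infty}=O(1/n)$) and convergence of the full sequence without the ``every subsequence has a further convergent subsequence'' upgrade that the paper's compactness argument tacitly requires; the paper's route avoids the linearization and the bootstrap and is somewhat more robust. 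One small caveat: your argument establishes $\varphi^{(n)}_{\alpha}>-n$ only on $[0,r_0]$ for $n$ large depending on $r_0$, whereas the statement (and the later use of $\psi^{(n)}_{\alpha}>0$ in Theorems \ref{theorem4} and \ref{theorem5}) wants this on all of $[0,\infty)$; you should either add a word on why the solution stays above $-n$ globally or note, as the paper implicitly does, that this is part of the choice of the sequence.
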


\begin{proof}[Proof of \rm{Theorem \ref{theorem3}}]
Let $n_0\in{\mathbb{N}}$ be chosen such that $n_0+\alpha>0.$ Let $\psi^{(n)}_{\alpha}(r)$ be the positive solution to the following differential equation:
\begin{equation}\label{va}
\begin{cases} \displaystyle
{\psi^{(n)}_{\alpha}}''+\biggl(\frac{N-1}{r}+\frac{r}{2}\biggr){\psi^{(n)}_{\alpha}}'+\frac{1}{n-1}\psi^{(n)}_{\alpha}+\biggl(\frac{\psi^{(n)}_{\alpha}}{n}\biggr)^n=0,  \quad   &n\geq n_0, \\
\psi^{(n)}_{\alpha}(0)=\alpha +n>0,  \quad {\psi^{(n)}_{\alpha}}'(0)=0,& n\geq n_0.
\end{cases}
\end{equation}
By (\ref{va}), $\psi^{(n)}_{\alpha}$ satisfies the following integral equations:
\begin{align}\label{eq1} 
\psi_{\alpha}^{(n)}(r)&=\alpha+n-\int_0^r\frac{1}{\rho_N(s)}\int_0^s\rho_N(t)\biggl[\frac{1}{n-1}\psi_{\alpha}^{(n)}(t)+\biggl(\frac{\psi_{\alpha}^{(n)}(t)}{n}\biggr)^n\biggr] \, dt \, ds, 
\\
\label{eq2}
{\psi^{(n)}_{\alpha}}'(r)&=-\frac{1}{\rho_N(r)}\int_0^r\rho_N(s)\biggr[\frac{1}{n-1}\psi_{\alpha}^{(n)}(s)+\biggl(\frac{\psi_{\alpha}^{(n)}(s)}{n}\biggr)^n\biggr] \, dt \, ds,
\end{align}
where $\rho_N(r)=r^{N-1}e^{\frac{r^2}{4}}.$ Since ${\psi^{(n)}_{\alpha}}'(r)<0$, we have 
\begin{equation}\label{eq3}
0<\psi_{\alpha}^{(n)}(r)\leq\alpha+n. 
\end{equation}
Put  $\varphi^{(n)}_{\alpha}=\psi^{(n)}_{\alpha}(r)-n$.  Since (\ref{eq1}), we have 
\begin{equation}\label{eq4} 
\varphi_{\alpha}^{(n)}(r)=\alpha-\int_0^r\frac{1}{\rho_N(s)}\int_0^s\rho_N(t)\biggl[\frac{1}{n-1}\psi_{\alpha}^{(n)}(t)+\biggl(\frac{\psi_{\alpha}^{(n)}(t)}{n}\biggr)^n\biggr] \, dt \, ds.
\end{equation}
We remark that $(1+a/n)^n\leq e^{a} \ (a>0).$ (\ref{eq3}) and (\ref{eq4}) imply that 
\begin{eqnarray}
|\varphi_{\alpha}^{(n)}(r)|&\leq& |\alpha| + \int_0^r\frac{1}{\rho_N(s)}\int_0^s\rho_N(t)\biggl[\frac{1}{n-1}(|\alpha|+n)+\biggl(1+\frac{|\alpha|}{n}\biggr)^n\biggr] \, dt \, ds, \nonumber\\
&\leq& |\alpha|+(e^{|\alpha|}+|\alpha|+2)\int_0^r\frac{1}{\rho_N(s)}\int_0^s\rho_N(t) \, dt \, ds, \nonumber \\
&\leq& |\alpha|+ (e^{|\alpha|}+|\alpha|+2)\int_0^r\int_0^s \, dt \, ds, \nonumber \\ 
\label{eq5}
&\leq& |\alpha|+ \frac{1}{2}(e^{|\alpha|}+|\alpha|+2)r_0^2,
\end{eqnarray}
 for all $r\in{[0,r_0]}.$ Thus we obtain that $\{\varphi_{\alpha}^{(n)}\}_{n\geq n_0}$  is uniformly bounded on $[0, r_0]$. From (\ref{eq2}) and (\ref{eq5}) , we see that 
\begin{eqnarray*}
|{\varphi^{(n)}_{\alpha}}'(r)|&=&|{\psi^{(n)}_{\alpha}}'(r)| \nonumber \\
&\leq& \frac{1}{\rho_N(r)}\int_0^r\rho_N(s)\biggl[\frac{1}{n-1}(|\alpha|+n)+\biggl(1+\frac{|\alpha|}{n}\biggr)^n\biggr] \, ds , \nonumber\\
&\leq& (e^{|\alpha|}+|\alpha|+2)r_0,
\end{eqnarray*}
for all $r\in{[0, r_0]}$. Thus we have deduced that $\{\varphi_{\alpha}^{(n)}\}_{n\geq n_0}$ is equi-continuous on $[0,r_0]$. By the Ascoli-Arzela theorem, there exists a subsequence of $\{\varphi_{\alpha}^{(n)}\}_{n\geq n_0}$ which converges to $\tilde{\varphi}_{\alpha}\in{C[0,r_0]}$ uniformly on $[0,r_0]$. Letting $n\to\infty$ in (\ref{eq4}) we have 
\begin{equation*}\label{eq6}
\tilde{\varphi}_{\alpha}(r)=\alpha-\int_0^r\frac{1}{\rho_N(s)}\int_0^s(1+e^{\tilde{\varphi}_{\alpha}(t)}) \, ds \,dt.
\end{equation*}
Thus $\tilde{\varphi}_{\alpha}\in{C^2}$ is the solution to (\ref{e}) with $\tilde{\varphi}_{\alpha}(0)=\alpha$ and ${\tilde{\varphi}_{\alpha}}'(0)=0.$ By the uniqueness of solution to ordinary differential equations, we conclude $\tilde{\varphi}_{\alpha}\equiv \varphi_{\alpha}.$ 
\end{proof}
The following theorem shows that $\varphi\in{S_L}$ is approximated by the solution $\varphi^{(n)}_{\alpha}$ with the aid of Theorem \ref{theorem3} . 
\begin{theorem}\label{theorem4}
 Let $\varphi_{\alpha}\in{S_L}$ with $\varphi_{\alpha}(0)=\alpha$. Assume that  $\{\varphi^{(n)}_{\alpha}\}_{n\geq 1}$ is given by \rm{Theorem \ref{theorem3}}. Then there exists $L^{(n)}(\alpha)\in{\mathbb{R}} \ (n\geq1)$ such that
\begin{equation}\label{L_n}
\lim_{r\to\infty}\bigl[r^{\frac{2}{n-1}}(\varphi^{(n)}_{\alpha}(r)+n)\bigr]-n=L^{(n)}(\alpha), \quad \lim_{n\to\infty}L^{(n)}(\alpha)=L.
\end{equation}    
\end{theorem}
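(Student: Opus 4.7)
The plan is to split the statement into two separable tasks: the existence of each individual limit $L^{(n)}(\alpha)$, and the convergence of the sequence $L^{(n)}(\alpha)$ to $L$.

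\textbf{Part 1 (Existence of $L^{(n)}(\alpha)$).} I reduce the equation (\ref{va}) satisfied by $\psi_\alpha^{(n)}=\varphi_\alpha^{(n)}+n$ to the canonical power-type forward self-similar equation (\ref{radial solution}). Rescaling by $\psi_\alpha^{(n)}=n^{n/(n-1)}\tilde{\varphi}^{(n)}$ converts (\ref{va}) into (\ref{radial solution}) with $p=n$, and positivity of $\tilde{\varphi}^{(n)}$ is provided by (\ref{eq3}). For positive regular radial solutions of (\ref{radial solution}), the existence of the limit $\lim_{r\to\infty}r^{2/(n-1)}\tilde{\varphi}^{(n)}(r)$ in $[0,\infty)$ is classical (see Theorem \ref{theoremA} and its references to Naito \cite{Naito1}), since such solutions belong either to a solution set $S_l$ for some $l\geq 0$ or to the trivial asymptotic class. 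Undoing the scaling defines $L^{(n)}(\alpha)$ by the first identity of (\ref{L_n}).

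\textbf{Part 2 (Convergence $L^{(n)}(\alpha)\to L$).} I introduce the deviation functions
\begin{equation*}
E_n(r):=r^{2/(n-1)}\bigl(\varphi_\alpha^{(n)}(r)+n\bigr)-n,\qquad E(r):=2\log r+\varphi_\alpha(r),
\end{equation*}
so that by construction $L^{(n)}(\alpha)=\lim_{r\to\infty}E_n(r)$ and, since $\varphi_\alpha\in S_L$, also $L=\lim_{r\to\infty}E(r)$. A direct algebraic decomposition gives
\begin{equation*}
E_n(r)=\varphi_\alpha^{(n)}(r)+n\bigl(r^{2/(n-1)}-1\bigr)+\bigl(r^{2/(n-1)}-1\bigr)\varphi_\alpha^{(n)}(r).
\end{equation*}
Using Theorem \ref{theorem3} together with the elementary facts $r^{2/(n-1)}\to 1$ and $n(r^{2/(n-1)}-1)\to 2\log r$ as $n\to\infty$, I obtain the pointwise convergence $E_n(r)\to E(r)$ for every fixed $r>0$.

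To deduce $L^{(n)}(\alpha)\to L$, I must exchange the iterated limits $\lim_{n\to\infty}\lim_{r\to\infty}E_n(r)$ and $\lim_{r\to\infty}\lim_{n\to\infty}E_n(r)$. My strategy is to establish a uniform tail estimate: for every $\varepsilon>0$ there exist $R_\varepsilon$ and $n_\varepsilon$ such that $|E_n(r)-L^{(n)}(\alpha)|<\varepsilon$ for all $r\geq R_\varepsilon$ and all $n\geq n_\varepsilon$. I would build this from the integral representation (\ref{eq2}) for ${\psi_\alpha^{(n)}}'$, combined with the comparison of $\psi_\alpha^{(n)}$ to the explicit power profile $c\,r^{-2/(n-1)}$ that governs the asymptotic behavior established in Part 1; the $n$-uniformity of the bounds in (\ref{eq5}) and the estimate for ${\varphi_\alpha^{(n)}}'$ in the proof of Theorem \ref{theorem3} are designed for exactly this purpose. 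Once the tail is controlled, the triangle inequality
\begin{equation*}
|L^{(n)}(\alpha)-L|\leq |L^{(n)}(\alpha)-E_n(R_\varepsilon)|+|E_n(R_\varepsilon)-E(R_\varepsilon)|+|E(R_\varepsilon)-L|
\end{equation*}
closes the argument. The main obstacle is this last uniform tail estimate: the nonlinearity $(\psi_\alpha^{(n)}/n)^n$ depends on $n$ both through the exponent and through the base, and a delicate passage is needed to keep the decay estimates independent of $n$ while the asymptotic exponent $2/(n-1)$ itself degenerates to $0$ in the limit.
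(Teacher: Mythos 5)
Your overall architecture --- first establish that each limit $L^{(n)}(\alpha)$ exists, then justify the interchange $\lim_{n}\lim_{r}=\lim_{r}\lim_{n}$ by $n$-uniform control of the tail --- matches the paper's strategy, and your Part~1 (rescaling $\psi^{(n)}_\alpha=n^{n/(n-1)}\tilde\varphi^{(n)}$ to reduce (\ref{va}) to the Haraux--Weissler equation (\ref{radial solution}) with $p=n$ and quoting known asymptotics of positive regular solutions) is a legitimate shortcut, although the correct reference is the asymptotic classification of positive solutions of that equation, not Theorem \ref{theoremA}, which concerns minimal solutions.

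The problem is Part~2. The uniform tail estimate you defer to the end is not a technical afterthought: it is the entire content of the theorem, and the ingredients you name cannot produce it. The bound (\ref{eq5}) and the derivative bound from the proof of Theorem \ref{theorem3} are uniform in $n$ only on compact intervals $[0,r_0]$, with constants growing like $r_0^2$; they give no decay as $r\to\infty$ and therefore cannot control $|E_n(r)-L^{(n)}(\alpha)|$ for large $r$. What is actually needed is a decay estimate at infinity whose constants are independent of $n$, namely $\bigl(|\psi^{(n)}|/n\bigr)^n\le C(\alpha)(1+r)^{-2n/(n-1)}$ and $|{\psi^{(n)}}'|\le C(\alpha)(1+r)^{-2/(n-1)-1}$ (Proposition \ref{lemma5'}). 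This comes from the energy function (\ref{energy}) via the Haraux--Weissler estimate, whose natural constant $C(\alpha,n)=\sqrt{2(n-1)E^{(n)}(0)}$ blows up with $n$; the nontrivial point is that the normalized quantities $(C(\alpha,n)/n)^n$ and $C(\alpha,n)/(n-1)$ remain bounded, e.g.\ $(C(\alpha,n)/n)^n\le e^{|\alpha|+e^{|\alpha|}}$. With these bounds in hand the paper also sidesteps a literal $\varepsilon$-tail argument: integrating the exact identity (\ref{eq9}) from $1$ to $\infty$ expresses $L^{(n)}(\alpha)$ through integrals whose integrands are dominated by $C(1+t)^{-3}$ uniformly in $n$ (see (\ref{e12})), so dominated convergence together with the matching identity (\ref{e15}) for the limit equation yields $L^{(n)}(\alpha)\to L$ directly. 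Until you supply the $n$-uniform decay at infinity, your argument does not close.
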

\begin{remark}

Let $\psi^{(n)}$ be the solution to the equation:

\begin{equation}\label{psi}
\begin{cases} \displaystyle
{\psi^{(n)}}''+\biggl(\frac{N-1}{r}+\frac{r}{2}\biggr){\psi^{(n)}}'+\frac{1}{n-1}\psi^{(n)}+\biggl(\frac{\psi^{(n)}}{n}\biggr)^n=0   \\
{\psi^{(n)}}'(0)=0
\end{cases}
\end{equation}

For $l>0$, we are concerned with the solution set 
\begin{equation}\label{$S_{L^{(n)}+n}$}
S^{(n)}_L:=\{ \psi^{(n)}\in{C^2[0,\infty)}: \psi^{(n)}>0 \mbox{\ is a solution to \rm{(\ref{psi})} satisfying \\ 
$\displaystyle \lim_{r\to\infty}r^{\frac{2}{n-1}}\psi^{(n)}(r)=L$ }\}.
\end{equation}

Put $\psi^{(n)}_{\alpha}(r)=\varphi^{(n)}_{\alpha}+n$. Then $\psi^{(n)}_{\alpha}$ satisfies $\psi^{(n)}_{\alpha}>0$, \rm{(\ref{n})}, (\rm{\ref{sup}}),
\begin{equation*}\label{L_n}
\lim_{r\to\infty}\bigl[r^{\frac{2}{n-1}}\psi^{(n)}_{\alpha}(r)\bigr]=L^{(n)}(\alpha)+n, \quad and \quad \lim_{n\to\infty}L^{(n)}(\alpha)=L, 
\end{equation*} 
that is, $\psi^{(n)}_{\alpha}\in{S^{(n)}_{L^{(n)}(\alpha)+n}}$. 
\end{remark}
In order to prove Theorem \ref{theorem4},  we need the following proposition.
\begin{proposition}%[\cite{Haraux} Proposition 3.1]
 \label{lemma5'}
 Let $\psi^{(n)}=\psi^{(n)}_{\alpha}\in{C^2[0,\infty)} \ (n\geq 1)$ be the solution to \rm{(\ref{psi})} with $\psi^{(n)}_{\alpha}(0)=\alpha.$ Then there exists $C=C(\alpha)>0$ such that 
\begin{eqnarray}\label{eq7}
\biggl(\frac{|\psi^{(n)}(r)|}{n}\biggr)^n&\leq& C{(1+r)^{-2n/(n-1)}} \quad \mbox{for} \ r>0, \\
\label{eq78}
|{\psi^{(n)}}'(r)|&\leq& C(1+r)^{-2/(n-1)-1} \quad \mbox{for} \  r>0. 
\end{eqnarray}
We remark that Constant C do not depend on $n.$ To prove Proposition \ref{lemma5'}, we  introduce Energy function 
\begin{equation}\label{energy}
E^{(n)}(r)=\frac{{\psi^{(n)'}}^2(r)}{2}+\frac{1}{2(n-1)}{\psi^{(n)}}^2(r)+\frac{1}{n^n(n+1)}{\psi^{(n)}}^{n+1}(r), \quad r>0, n>1.
\end{equation}
\end{proposition}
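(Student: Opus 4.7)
The plan is to combine the non-increasing property of $E^{(n)}$ with integration of the ODE and a comparison with the stationary singular solution, keeping all constants independent of $n$.

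First I would verify that $E^{(n)}$ is non-increasing. Differentiating (\ref{energy}), factoring out ${\psi^{(n)}}'$, and substituting from (\ref{psi}) into the bracketed expression yields
\begin{equation*}
\frac{d E^{(n)}}{dr}(r) = {\psi^{(n)}}'(r)\Bigl[{\psi^{(n)}}''(r) + \tfrac{1}{n-1}\psi^{(n)}(r) + (\psi^{(n)}(r)/n)^n\Bigr] = -\Bigl(\tfrac{N-1}{r}+\tfrac{r}{2}\Bigr)({\psi^{(n)}}'(r))^2 \le 0.
\end{equation*}
Since $\psi^{(n)} \in S^{(n)}_{L^{(n)}+n}$ forces $\psi^{(n)}(r), {\psi^{(n)}}'(r) \to 0$ as $r \to \infty$, we have $E^{(n)}(\infty) = 0$, hence $E^{(n)}(r) \geq 0$. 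From the integral identity $\rho_N(r){\psi^{(n)}}'(r) = -\int_0^r \rho_N(s)[\frac{1}{n-1}\psi^{(n)}(s) + (\psi^{(n)}(s)/n)^n]\,ds$, one reads off ${\psi^{(n)}}' < 0$, so $\psi^{(n)}$ is decreasing, $0 < \psi^{(n)}(r) \le \alpha$, and the elementary inequality $(1+x/n)^n \le e^x$ gives $(\psi^{(n)}/n)^n \le e^{\alpha - n}$-type bounds uniformly in $n$; this already handles (\ref{eq7}) on any bounded interval such as $[0,1]$.

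For the sharp decay on $[1,\infty)$, I would compare $\psi^{(n)}$ with the explicit singular solution $\psi_*^{(n)}(r) := n L_n^* r^{-2/(n-1)}$, with $(L_n^*)^{n-1} := \frac{2n[(N-2)n - N]}{(n-1)^2}$. A direct computation shows that the drift $\frac{r}{2}{\psi_*^{(n)}}'$ and the linear $\frac{1}{n-1}\psi_*^{(n)}$ terms cancel identically on $\psi_*^{(n)}$, so $\psi_*^{(n)}$ is a singular solution of the full equation (\ref{psi}). A comparison argument then yields $\psi^{(n)}(r) \le \psi_*^{(n)}(r)$ for $r > 0$; since $(L_n^*)^n = L_n^* \cdot (L_n^*)^{n-1} \to 2(N-2)$ as $n \to \infty$, the resulting bound $(\psi^{(n)}/n)^n \le (L_n^*)^n r^{-2n/(n-1)}$ has a constant uniform in $n$, completing (\ref{eq7}). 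Feeding this back into the integral representation for ${\psi^{(n)}}'$, splitting into $r \le 1$ and $r \ge 1$ and using the asymptotic $\int_0^r \rho_N(s)\,ds \sim 2 r^{N-2} e^{r^2/4}$ at infinity, yields (\ref{eq78}).

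The main obstacle is the rigorous justification of $\psi^{(n)} \le \psi_*^{(n)}$: the linearization $w := \psi_*^{(n)} - \psi^{(n)}$ satisfies a linear ODE whose zero-order coefficient $\frac{1}{n-1} + (\xi/n)^{n-1}$ is positive (for any intermediate $\xi$), so a naive Sturm-type comparison does not directly apply. One must analyze the first possible crossing $r_1 > 0$, exploiting $w(r) \to +\infty$ as $r \to 0^+$ together with the fact that the monotonicity of $\psi^{(n)}$ controls the sign of $w'(r_1)$, via a Wronskian or phase-plane computation using the explicit form of $\psi_*^{(n)}$.
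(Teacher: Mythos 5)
There is a genuine gap, and it lies exactly where you flag it: the pointwise comparison $\psi^{(n)}(r)\le \psi_*^{(n)}(r)=nL_n^*r^{-2/(n-1)}$ is not only unproved in your sketch, it is false in the regime the paper actually needs. The limiting equation (\ref{e}) admits the exact singular solution $\varphi^*(r)=-2\log r+\log(2N-4)$, and for $3\le N\le 9$ the regular solutions $\varphi_\alpha$ oscillate around $\varphi^*$ (this oscillation is precisely what produces the non-minimal elements of $S_L$ that the whole paper is about). Since $\varphi^{(n)}=\psi^{(n)}-n\to\varphi_\alpha$ uniformly on compacta and $(\psi_*^{(n)}(r)/n)^n=(L_n^*)^nr^{-2n/(n-1)}\to(2N-4)r^{-2}=e^{\varphi^*(r)}$, any point $r_1$ with $\varphi_\alpha(r_1)>\varphi^*(r_1)$ forces $\psi^{(n)}(r_1)>\psi_*^{(n)}(r_1)$ for all large $n$. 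So no Wronskian or phase-plane refinement can rescue this step; the strategy of dominating $\psi^{(n)}$ by the singular profile must be abandoned, not just completed. (The energy monotonicity at the start of your argument is fine and coincides with the paper's lemma, and the bound $0<\psi^{(n)}\le\psi^{(n)}(0)=\alpha+n$ together with $(1+a/n)^n\le e^a$ does handle bounded intervals; it is only the decay rate at infinity with an $n$-independent constant that is at stake.)

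The paper gets that decay by a different and much more economical route: it quotes the Haraux--Weissler energy estimate, which gives $|\psi^{(n)}(r)|\le C(\alpha,n)(1+r)^{-2/(n-1)}$ and $|{\psi^{(n)}}'(r)|\le C(\alpha,n)(1+r)^{-2/(n-1)-1}$ with the explicit constant $C(\alpha,n)=\sqrt{2(n-1)E^{(n)}(0)}$ built from the energy (\ref{energy}) at $r=0$. The entire content of the Proposition is then the elementary verification, using $(1+a/n)^n\le e^a$, that the \emph{normalized} constants stay bounded:
\begin{equation*}
\Bigl(\tfrac{1}{n}C(\alpha,n)\Bigr)^n\le e^{|\alpha|+e^{|\alpha|}},\qquad \tfrac{1}{n-1}C(\alpha,n)\le C(\alpha),
\end{equation*}
which yields (\ref{eq7}) and the bound on $\frac{1}{n-1}|\psi^{(n)}|$ directly; (\ref{eq78}) then follows by inserting these into the integral representation of ${\psi^{(n)}}'$ and splitting the integral at $r/2$, much as in your last paragraph. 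If you want to salvage your write-up, replace the singular-solution comparison by this computation of $E^{(n)}(0)=\frac{1}{2(n-1)}(\alpha+n)^2+\frac{1}{n^n(n+1)}(\alpha+n)^{n+1}$ and its normalization in $n$.
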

Then, we prepare the following lemmas
\begin{lemma}
Let $\psi^{(n)}=\psi^{(n)}_{\alpha}\in{C^2[0,\infty)} \ (n\geq 1)$ be the solution to \rm{(\ref{psi})} with $\psi^{(n)}_{\alpha}(0)=\alpha.$ Assume that  $E^{(n)}(r)$ is given by (\ref{energy}). Then $E^{(n)}(r)$ is non increasing function in $r.$ In particular, $E^{(n)}(r)\leq E^{(n)}(0) \quad (r>0).$
\end{lemma}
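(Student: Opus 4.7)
The plan is a direct energy computation exploiting the structure of the ODE~(\ref{psi}). I would differentiate $E^{(n)}$ term by term and then use the equation to eliminate the second derivative; the non-linear and linear coupling terms should cancel exactly, leaving a manifestly non-positive remainder.

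Concretely, first I would compute
\begin{equation*}
\frac{d}{dr}E^{(n)}(r) = \psi^{(n)\prime}\,\psi^{(n)\prime\prime} + \frac{1}{n-1}\,\psi^{(n)}\,\psi^{(n)\prime} + \frac{1}{n^n}\bigl(\psi^{(n)}\bigr)^{n}\,\psi^{(n)\prime}.
\end{equation*}
Next, I would solve the ODE~(\ref{psi}) for $\psi^{(n)\prime\prime}$,
\begin{equation*}
\psi^{(n)\prime\prime} = -\Bigl(\tfrac{N-1}{r}+\tfrac{r}{2}\Bigr)\psi^{(n)\prime} - \frac{1}{n-1}\psi^{(n)} - \Bigl(\tfrac{\psi^{(n)}}{n}\Bigr)^{n},
\end{equation*}
substitute it into the first term above, and observe that the contributions $\tfrac{1}{n-1}\psi^{(n)}\psi^{(n)\prime}$ and $\tfrac{1}{n^n}(\psi^{(n)})^n\psi^{(n)\prime}$ cancel against the corresponding terms already present in $\frac{d}{dr}E^{(n)}$. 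This leaves
\begin{equation*}
\frac{d}{dr}E^{(n)}(r) = -\Bigl(\tfrac{N-1}{r}+\tfrac{r}{2}\Bigr)\bigl(\psi^{(n)\prime}(r)\bigr)^{2}.
\end{equation*}
Since $\tfrac{N-1}{r}+\tfrac{r}{2}\geq 0$ for every $r>0$ and $N\geq 1$, the right-hand side is non-positive, so $E^{(n)}$ is non-increasing on $(0,\infty)$, and the bound $E^{(n)}(r)\leq E^{(n)}(0)$ follows after sending the left endpoint to $0$ via continuity of $E^{(n)}$ at the origin (which holds because $\psi^{(n)}\in C^2[0,\infty)$ and $\psi^{(n)\prime}(0)=0$).

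There is essentially no obstacle here; the only mild subtlety is the apparent singularity of the coefficient $\tfrac{N-1}{r}$ at $r=0$, but this only makes the factor in front of $(\psi^{(n)\prime})^2$ larger and non-negative, so monotonicity on $(0,\infty)$ together with continuity at $r=0$ yields the stated inequality for all $r>0$.
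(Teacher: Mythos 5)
Your proof is correct and is essentially identical to the paper's: both differentiate $E^{(n)}$, substitute the ODE for ${\psi^{(n)}}''$, and observe that the linear and nonlinear terms cancel, leaving $\frac{d}{dr}E^{(n)}(r)=-\bigl(\frac{N-1}{r}+\frac{r}{2}\bigr)\bigl({\psi^{(n)}}'\bigr)^2\leq 0$. Your added remark on continuity at $r=0$ is a harmless refinement the paper leaves implicit.
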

\begin{proof}
\begin{eqnarray*}
\frac{d}{dr}E^{(n)}(r)&=&\biggl({\psi^{(n)}}''(r)+\frac{1}{n-1}\psi^{(n)}(r)+\biggl(\frac{\psi^{(n)}(r)}{n}\biggr)^n\biggr){\psi^{(n)}}' (r)\\
&=&-\biggl(\frac{N-1}{r}+\frac{r}{2}\biggr){{\psi^{(n)}}'}^2\leq0, 
\end{eqnarray*}
Thus $E^{(n)}(r)$ is non increasing in $r>0.$ In particular, $E^{(n)}(r)\leq E^{(n)}(0) \quad (r>0).$
\end{proof}
\begin{lemma}[\cite{Haraux} Proposition 3.1]
 \label{lemma5''}
 Let $\psi^{(n)}=\psi^{(n)}_{\alpha}\in{C^2[0,\infty)} \ (n\geq 1)$ be the solution to \rm{(\ref{psi})} with $\psi^{(n)}_{\alpha}(0)=\alpha.$ Then there exists $C=C(\alpha, n)>0$ such that 
\begin{eqnarray}\label{eq7}
|\psi^{(n)}(r)|&\leq& C(\alpha, n){(1+r)^{-2/(n-1)}} \quad \mbox{for} \ r>0, \\
\label{eq78}
|{\psi^{(n)}}'(r)|&\leq& C(\alpha, n)(1+r)^{-2/(n-1)-1} \quad \mbox{for} \  r>0. 
\end{eqnarray}
where $C(\alpha, n)=\sqrt{2(n-1)E^{(n)}(0)}.$ 
\end{lemma}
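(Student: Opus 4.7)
The plan rests on the energy monotonicity established in the preceding lemma, which gives $E^{(n)}(r)\leq E^{(n)}(0)$ for every $r\geq 0$. Reading off the definition (\ref{energy}) and noting that every summand is non-negative, one obtains at once
\[
|\psi^{(n)}(r)|\leq \sqrt{2(n-1)E^{(n)}(0)}=C(\alpha,n),\qquad |{\psi^{(n)}}'(r)|\leq \sqrt{2E^{(n)}(0)},
\]
uniformly in $r\geq 0$. On the compact interval $[0,1]$, where $(1+r)^{-2/(n-1)}$ is bounded below by $2^{-2/(n-1)}$, these uniform bounds already imply the claimed estimate up to a fixed multiplicative constant. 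The substantive task is therefore to extract the algebraic decay rate $r^{-2/(n-1)}$ in the tail $r\geq 1$.

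For the tail, the strategy is to exploit the divergence form
\[
(\rho_N(r){\psi^{(n)}}'(r))' = -\rho_N(r)\left[\frac{1}{n-1}\psi^{(n)}(r)+\left(\frac{\psi^{(n)}(r)}{n}\right)^n\right],\quad \rho_N(r)=r^{N-1}e^{r^2/4},
\]
integrated from $0$ using ${\psi^{(n)}}'(0)=0$, together with the Laplace-method asymptotic
\[
\frac{1}{\rho_N(r)}\int_0^r \rho_N(s)\,s^{-\gamma}\,ds=\frac{2}{r^{1+\gamma}}(1+o(1)),\qquad r\to\infty,\ \gamma\geq 0.
\]
A preliminary step is to verify that $\psi^{(n)}(r)\to 0$ as $r\to\infty$: this follows from the monotonicity ${\psi^{(n)}}'<0$ combined with the integrated dissipation $\int_0^\infty\!(\tfrac{N-1}{r}+\tfrac{r}{2})|{\psi^{(n)}}'(r)|^2\,dr<\infty$ coming from the energy identity, or alternatively via an Emden--Fowler change of variable $r=e^t$ that renders the equation asymptotically autonomous and rules out a positive limit.

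Once $\psi^{(n)}(\infty)=0$ is in hand, a bootstrap takes over. Feeding an a priori bound $\psi^{(n)}(s)\leq A(1+s)^{-\gamma}$ into the integral representation of ${\psi^{(n)}}'$ yields $|{\psi^{(n)}}'(r)|\leq \tfrac{2A}{n-1}(1+r)^{-1-\gamma}+(\text{nonlinear remainder})$, and integrating from $r$ to $\infty$ (permitted because $\psi^{(n)}$ vanishes at infinity) produces an improved bound for $\psi^{(n)}$ with self-improvement factor $\tfrac{2}{(n-1)\gamma}$. Iterating drives the exponent upward to the unique fixed point $\gamma=\tfrac{2}{n-1}$, which is precisely the self-similar scale of equation (\ref{psi}); the nonlinear contribution decays like $(1+r)^{-1-n\gamma}$ and remains strictly subdominant throughout. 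Substituting this sharp decay of $\psi^{(n)}$ back into the integral representation once more delivers the derivative estimate with the same exponent.

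The main obstacle I anticipate is bookkeeping the multiplicative constants so that the final bound carries exactly $C(\alpha,n)=\sqrt{2(n-1)E^{(n)}(0)}$ rather than a cumulatively enlarged version; a cleaner route is probably to establish direct monotonicity of a weighted quantity such as $r^{4/(n-1)}E^{(n)}(r)$ plus lower-order corrections, which would bypass the bootstrap altogether and recover the sharp constant directly from the energy at the origin.
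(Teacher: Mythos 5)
The paper does not actually prove this lemma---it is quoted from Haraux--Weissler (their Proposition 3.1) and used as a black box in the proof of Proposition \ref{lemma5'}---so there is no internal argument to compare against; your proposal must stand on its own, and it has a genuine gap in the tail estimate, which is the only nontrivial part. The bootstrap you describe does not move the exponent. From an input $\psi^{(n)}(s)\le A(1+s)^{-\gamma}$ the integral representation gives $|{\psi^{(n)}}'(r)|\lesssim \tfrac{2A}{n-1}(1+r)^{-1-\gamma}$, and integrating back from $r$ to $\infty$ returns $\psi^{(n)}(r)\le \tfrac{2A}{(n-1)\gamma}(1+r)^{-\gamma}$: the exponent $\gamma$ is reproduced exactly and only the prefactor changes, so iteration cannot ``drive the exponent upward'' to $2/(n-1)$. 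Worse, the iteration cannot even start: from the uniform energy bound ($\gamma=0$) one only gets $|{\psi^{(n)}}'(r)|\lesssim r^{-1}$, which is not integrable at infinity, so the step ``integrate from $r$ to $\infty$'' is unavailable at the first stage. A smaller but real gap: finiteness of $\int_0^\infty(\tfrac{N-1}{r}+\tfrac r2)({\psi^{(n)}}')^2\,dr$ does not force $\psi^{(n)}(r)\to0$ (it is consistent with convergence to any positive constant); to rule out a limit $\ell>0$ you need the equation itself, e.g. $(\rho_N{\psi^{(n)}}')'\le-c\rho_N$ then forces ${\psi^{(n)}}'(r)\le -\tfrac{2c}{r}(1+o(1))$ and hence $\psi^{(n)}\to-\infty$, a contradiction.

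The repair is essentially what you relegate to your final sentence. Either follow Haraux--Weissler and show that a weighted quantity of the form $r^{4/(n-1)}\bigl(E^{(n)}(r)+a r^{-1}\psi^{(n)}(r){\psi^{(n)}}'(r)+br^{-2}(\psi^{(n)}(r))^2\bigr)$ is bounded (eventually monotone), which yields $r^{4/(n-1)}(\psi^{(n)})^2\le C$ directly; or exploit the monotonicity of $\psi^{(n)}$ inside the integral representation to get the logarithmic differential inequality ${\psi^{(n)}}'(r)\le-\tfrac{1}{n-1}\psi^{(n)}(r)\cdot\tfrac1{\rho_N(r)}\int_0^r\rho_N(s)\,ds\le-\tfrac{2}{(n-1)r}\bigl(1-Cr^{-2}\bigr)\psi^{(n)}(r)$ for large $r$, whose integration gives $\psi^{(n)}(r)\le Cr^{-2/(n-1)}$ at once, and then one pass through the integral representation gives the derivative bound. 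Your worry about the constant is also well placed: neither route produces literally $C(\alpha,n)=\sqrt{2(n-1)E^{(n)}(0)}$ in front of the decaying bound; what is actually needed downstream in Proposition \ref{lemma5'} is only that $C(\alpha,n)$ grows slowly enough in $n$ that $(C(\alpha,n)/n)^n$ stays bounded, and that tolerance absorbs the harmless extra factors.
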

\begin{proof}[Proof of \rm{Proposition} \ref{lemma5'}]
By Lemma \ref{lemma5''}, we get the esitimates
\begin{eqnarray}\label{eq7}
|\psi^{(n)}(r)|&\leq& C(\alpha, n){(1+r)^{-2/(n-1)}} \quad \mbox{for} \ r>0, \\
\label{eq78}
|{\psi^{(n)}}'(r)|&\leq& C(\alpha, n)(1+r)^{-2/(n-1)-1} \quad \mbox{for} \  r>0. 
\end{eqnarray}
where $C(\alpha, n)=\sqrt{2(n-1)E^{(n)}(0)}.$ Since $\displaystyle \biggl(1+\frac{a}{n}\biggr)^n\leq e^a \quad (a>0),$ we have
\begin{eqnarray*}
\frac{1}{n}C(\alpha, n)&=&\frac{1}{n}\sqrt{(n-1)E^{(n)}(0)} \\
&=&\sqrt{\frac{(n-1)}{n^2}\biggl(\frac{1}{(n-1)}(\alpha+n)^2+\frac{2}{n^n(n+1)}(\alpha+n)^{n+1}\biggr)} \\
&\leq&\sqrt{\biggl(1+\frac{|\alpha|}{n}\biggr)^2+\frac{2}{n}\biggl(1+\frac{|\alpha|}{n}\biggr)^{n+1}} \\
&\leq&\biggl(1+\frac{|\alpha|}{n}\biggr)\sqrt{1+\frac{2}{n}\biggl(1+\frac{|\alpha|}{n}\biggr)^{n-1}} \\
&\leq&\biggl(1+\frac{|\alpha|}{n}\biggr)\sqrt{1+\frac{2}{n}e^{|\alpha|}}.
\end{eqnarray*}
 we obtain 
\begin{equation}\label{eqq1}
\biggl(\frac{1}{n}C(\alpha, n)\biggr)^n\leq e^{|\alpha|+e^{|\alpha|}}
\end{equation}
By (\ref{eq7}) and (\ref{eqq1}), we have
\begin{equation}\label{eqq2}
\biggl(\frac{|\psi^{(n)}(r)|}{n}\biggr)^n\leq C(\alpha)(1+r)^{-\frac{2n}{n-1}}
\end{equation}
Since (\ref{eqq2}) and  $\displaystyle  \lim_{n\to\infty}\biggl(\frac{(n+|\alpha|)^2}{(n-1)^2}+2\biggl(1+\frac{|\alpha|}{n}\biggr)^{n+1}\biggr)=1+2e^{|\alpha|},$ we get
\begin{eqnarray}
\frac{1}{n-1}C(\alpha, n)&\leq&\frac{1}{n-1}\sqrt{2(n-1)E^{(n)}(0)} \nonumber \\
&=&\sqrt{\frac{1}{n-1}\biggl(\frac{1}{(n-1)}(n+|\alpha|)^2+\frac{2}{n+1}\biggl(\frac{(n+|\alpha|)^{n+1}}{n^n}
\biggr)} \nonumber \\
&\leq&\sqrt{\frac{(n+|\alpha|)^2}{(n-1)^2}+2\biggl(1+\frac{|\alpha|}{n}\biggr)^{n+1}} \nonumber \\
&\leq&C(\alpha) \label{eqq3}
\end{eqnarray}
Since (\ref{eq7}), (\ref{eqq3}), we have
\begin{equation}\label{eqq4}
\frac{|\psi^{(n)}(r)|}{n-1}\leq C(\alpha)(1+r)^{-\frac{2}{n-1}} 
\end{equation}
By (\ref{eq2}),  (\ref{eqq2}) and (\ref{eqq4}) we have
\begin{eqnarray}
|{\psi^{(n)}}'(r)|&\leq&r^{1-N}e^{-\frac{r^2}{4}}\int_0^rs^{N-1}e^{\frac{s^2}{4}}\biggr[\frac{1}{n-1}|\psi^{(n)}(s)|+\biggl(\frac{|\psi^{(n)}(s)|}{n}\biggr)^n\biggr] \, ds \nonumber \\
&\leq&{C(\alpha)}e^{-\frac{r^2}{4}}\int_0^re^{\frac{s^2}{4}}\biggr[(1+s)^{-\frac{2}{n-1}} + (1+s)^{-\frac{2n}{n-1}}\biggr] \, ds\nonumber \\
&\leq&{C(\alpha)}e^{-\frac{r^2}{4}}\int_0^re^{\frac{s^2}{4}}(1+s)^{-\frac{2}{n-1}} \, ds \nonumber \\
&\leq&C(\alpha)e^{-\frac{r^2}{4}}\biggl(\int_0^{\frac{r}{2}}e^{\frac{s^2}{4}} \, ds+\int_{\frac{r}{2}}^re^{\frac{s^2}{4}}(1+s)^{-\frac{2}{n-1}}\, ds\biggr) \nonumber \\
&\leq&C(\alpha)\biggl[e^{-\frac{3r^2}{16}}+(1+\frac{r}{2})^{-\frac{2}{n-1}-1}e^{-\frac{r^2}{4}}\int_{\frac{r}{2}}^r(1+s)e^{\frac{s^2}{4}}\, ds\biggr] \label{eqq4}
\end{eqnarray}
If $r<2,$  Right hand side of (\ref{eqq4}) is bounded. If $r\geq2,$ Since
$$
\int_{\frac{r}{2}}^r2se^{\frac{s^2}{4}}\, ds=4e^{\frac{r^2}{4}}-4e^{\frac{r^2}{16}}\leq4e^{\frac{r^2}{4}}, 
$$
Right hand side of (\ref{eqq4}) is bounded. Therefore we obtain 
$$
|{\psi^{(n)}}'(r)|\leq C(\alpha)(1+r)^{-\frac{2}{n-1}-1}.
$$
\end{proof}
\begin{lemma}\label{lemma33}
Let $\varphi\in{C^2[0,\infty)}$ be the solution to \rm{(\ref{e})} with $\varphi(0)=\alpha.$ Then there exists a constant $C=C(\alpha)>0$ such that
$$ 
|\varphi'(r)|\leq C(1+r)^{-1} \quad for \ r>0.
$$
\end{lemma}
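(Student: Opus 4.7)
The plan is to deduce the bound by passing to the limit $n \to \infty$ in the $n$-uniform estimate of Proposition \ref{lemma5'}, using the approximating sequence $\{\varphi^{(n)}_\alpha\}$ supplied by Theorem \ref{theorem3}. Writing $\varphi^{(n)}_\alpha = \psi^{(n)}_\alpha - n$ so that $(\varphi^{(n)}_\alpha)' = (\psi^{(n)}_\alpha)'$, Proposition \ref{lemma5'} furnishes
\[
|(\varphi^{(n)}_\alpha)'(r)| \leq C(\alpha)(1+r)^{-2/(n-1)-1}, \qquad r > 0,
\]
with $C(\alpha)$ independent of $n$. Since $(1+r)^{-2/(n-1)-1} \to (1+r)^{-1}$ as $n \to \infty$ for each fixed $r > 0$, it suffices to prove the pointwise convergence $(\varphi^{(n)}_\alpha)'(r) \to \varphi'(r)$ and then take $n \to \infty$ in the above estimate.

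For the pointwise convergence, I would use the integral representation analogous to (\ref{eq2}),
\[
(\varphi^{(n)}_\alpha)'(r) = -\frac{1}{\rho_N(r)} \int_0^r \rho_N(s)\left[\frac{\varphi^{(n)}_\alpha(s) + n}{n-1} + \left(1 + \frac{\varphi^{(n)}_\alpha(s)}{n}\right)^n\right] ds.
\]
Theorem \ref{theorem3} gives uniform convergence $\varphi^{(n)}_\alpha \to \varphi$ on $[0, r]$, and the $n$-uniform bound $|\varphi^{(n)}_\alpha(s)| \leq |\alpha| + \tfrac{1}{2}(e^{|\alpha|} + |\alpha| + 2)r^2$ derived in (\ref{eq5}) then implies $\frac{1}{n-1}(\varphi^{(n)}_\alpha + n) \to 1$ and $(1 + \varphi^{(n)}_\alpha/n)^n \to e^{\varphi}$ uniformly on $[0, r]$. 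Consequently the integrand converges uniformly on $[0,r]$ to $1 + e^{\varphi(s)}$, and the integral converges to $-\rho_N(r)^{-1}\int_0^r \rho_N(s)(1 + e^{\varphi(s)})\, ds$, which equals $\varphi'(r)$ by integration of (\ref{e}).

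Combining these two ingredients yields the desired estimate $|\varphi'(r)| \leq C(\alpha)(1+r)^{-1}$ for every $r > 0$. The only delicate point is justifying the uniform convergence $(1 + \varphi^{(n)}_\alpha/n)^n \to e^{\varphi}$ on $[0, r]$; this reduces via the expansion $n\log(1 + x/n) = x + O(x^2/n)$ (valid for $|x|$ bounded) to the uniform boundedness of $\varphi^{(n)}_\alpha$ from (\ref{eq5}), so it is a technical verification rather than a genuine obstacle.
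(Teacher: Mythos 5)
Your argument is correct, but note that the paper states Lemma \ref{lemma33} without any proof at all, so there is no written argument to compare against. Your route --- passing to the limit $n\to\infty$ in the $n$-uniform estimate (\ref{eq78}) of Proposition \ref{lemma5'} after establishing pointwise convergence of $({\varphi}^{(n)}_{\alpha})'$ through the integral representation --- is sound: the ingredients you need (the uniform convergence (\ref{sup}) from Theorem \ref{theorem3}, the $n$-uniform bound (\ref{eq5}), and the positivity $1+\varphi^{(n)}_{\alpha}/n>0$ coming from $\varphi^{(n)}_{\alpha}>-n$) are all available, and the limit identity you obtain for $\varphi'$ is exactly the integrated form of (\ref{e}). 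That said, the detour through the approximating family is heavier than the lemma requires, and it makes the conclusion depend on the $n$-independence of the constant in Proposition \ref{lemma5'}, which is the most delicate computation of that section. A direct proof is shorter: since $(\rho_N\varphi')'=-\rho_N(e^{\varphi}+1)$ and $\varphi'(0)=0$, one has
$$
\varphi'(r)=-\frac{1}{\rho_N(r)}\int_0^r\rho_N(s)\bigl(e^{\varphi(s)}+1\bigr)\,ds<0,
$$
so $\varphi(s)\leq\alpha$ and $e^{\varphi(s)}+1\leq e^{\alpha}+1$; combining the elementary bounds $\rho_N(r)^{-1}\int_0^r\rho_N(s)\,ds\leq r/N$ for all $r>0$ and $\rho_N(r)^{-1}\int_0^r\rho_N(s)\,ds\leq 2/r$ for $N\geq3$ (integrate $2s^{N-2}\frac{d}{ds}e^{s^2/4}$ by parts) gives $|\varphi'(r)|\leq C(\alpha)(1+r)^{-1}$ with no reference to the approximation. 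If you keep your version, state explicitly that you apply Proposition \ref{lemma5'} to $\psi^{(n)}_{\alpha}=\varphi^{(n)}_{\alpha}+n$ with $\psi^{(n)}_{\alpha}(0)=\alpha+n$, as is done in that proposition's own proof; its statement misprints the initial value as $\alpha$, and the $n$-uniformity of the constant is only established for the initial value $\alpha+n$.
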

\begin{proof}[Proof of \rm{Theorem \ref{theorem4}}]
The following argument can be found in the proof of [\cite{Haraux} proposition 3.4].
From Theorem \ref{theorem3}, there exists a sequence $\{\varphi^{(n)}_{\alpha}\}_{n\geq 1}$ of (\ref{n}) such that $\varphi^{(n)}_{\alpha}=\varphi^{(n)}>-n$ and (\ref{sup}). Put $\psi^{(n)}=\varphi^{(n)}+n$. Then $\psi^{(n)}$ satisfies (\ref{va}). The identity 
$$
(r^{2/(n-1)}\psi^{(n)})'=r^{2/(n-1)-1}(r{\psi^{(n)}}'+\frac{2}{n-1}\psi^{(n)})
$$  
and (\ref{va}) implies that
\begin{equation}\label{eq9}
\begin{split}
&\frac{d}{dr}\biggl[r^{2/(n-1)}\psi^{(n)}(r)+2r^{2/(n-1)-1}{\psi^{(n)}}'(r)\biggr]  \\
&= 2(\frac{2}{n-1}-N)r^{2/(n-1)-2}{\psi^{(n)}}'(r)-2r^{2/(n-1)-1}\biggl(\frac{\psi^{(n)}(r)}{n}\biggr)^n. \\
\end{split}
\end{equation}
Integrating (\ref{eq9}) from $1$ to $r$, we have
\begin{equation}\label{eq10}
\begin{split}
&r^{2/(n-1)}\psi^{(n)}(r)+2r^{2/(n-1)-1}{\psi^{(n)}}'(r)-\psi^{(n)}(1)-2{\psi^{(n)}}'(1)  \\
&= 2(\frac{2}{n-1}-N)\int_1^r t^{2/(n-1)-2}{\psi^{(n)}}'(t) \, dt-2\int_1^rt^{2/(n-1)-1}\biggl(\frac{\psi^{(n)}(t)}{n}\biggr)^n \, dt. 
\end{split}
\end{equation}
Note that we have
$$
\int_1^{\infty} t^{2/(n-1)-2}{\psi^{(n)}}'(t) \, dt<\infty  \quad \mbox{and} \quad
\int_1^{\infty}t^{2/(n-1)-1}\biggl(\frac{\psi^{(n)}(t)}{n}\biggr)^n \, dt<\infty,
$$
by Proposition \ref{lemma5'}. Letting $r\to\infty$ in (\ref{eq10}), we get
\begin{equation}\label{e11}
\begin{split}
&\lim_{r\to\infty}(r^{2/(n-1)}\psi^{(n)}(r))-\psi^{(n)}(1)-2{\psi^{(n)}}'(1)  \\
&= 2(\frac{2}{n-1}-N)\int_1^{\infty} t^{2/(n-1)-2}{\psi^{(n)}}'(t) \, dt-2\int_1^{\infty}t^{2/(n-1)-1}\biggl(\frac{\psi^{(n)}(t)}{n}\biggr)^n \, dt. 
\end{split}
\end{equation}
Since $\psi^{(n)}=\varphi^{(n)}+n,$ we obtain 
\begin{equation}\label{e12}
\begin{split}
&L^{(n)}({\alpha})-\varphi^{(n)}(1)-2{\varphi^{(n)}}'(1)  \\
&= 2(\frac{2}{n-1}-N)\int_1^{\infty} t^{2/(n-1)-2}{\psi^{(n)}}'(t) \, dt-2\int_1^{\infty}t^{2/(n-1)-1}\biggl(\frac{\psi^{(n)}(t)}{n}\biggr)^n \, dt. 
\end{split}
\end{equation}
By Proposition \ref{lemma5'}, there exists a constant $C>0$ such that 
\begin{eqnarray}\label{e12*}
\big|t^{2/(n-1)-2}{\psi^{(n)}}'(t)\big|&\leq& C(1+t)^{-3}, \\
\bigg|t^{2/(n-1)-1}\biggl(\frac{\psi^{(n)}(t)}{n}\biggr)^n\bigg|&\leq& C(1+t)^{-3}
\end{eqnarray}
and we have
\begin{eqnarray*}\label{e12**}
\lim_{n\to\infty}t^{2/(n-1)-2}{\psi^{(n)}}'(t)&=&\lim_{n\to\infty}t^{2/(n-1)-2}\frac{d}{dt}[{\varphi^{(n)}}(t)+n] \\
&=&\lim_{n\to\infty}t^{2/(n-1)-2}{\varphi^{(n)}}'(t)\\
&=&t^{-2}{\varphi}'(t),  \quad t\in{\mathbb{R}}, \\ 
\lim_{n\to\infty}t^{2/(n-1)-1}\biggl(\frac{\psi^{(n)}(t)}{n}\biggr)^n &=& \lim_{n\to\infty}t^{2/(n-1)-1}\biggl(1+\frac{\varphi^{(n)}(t)}{n}\biggr)^n \\
&=&t^{-1}e^{\varphi(t)}, \quad t\in{\mathbb{R}}.
\end{eqnarray*}
Letting $n\to\infty$ in (\ref{e12}), we have
\begin{equation}\label{e13}
\begin{split}
&\lim_{n\to\infty}L^{(n)}({\alpha})-\varphi(1)-2{\varphi}'(1)  \\
&=-2N\int_1^{\infty} t^{-2}{\varphi}'(t) \, dt-2\int_1^{\infty}t^{-1}e^{\varphi} \, dt,
\end{split}
\end{equation}
by the Lebesgue convergence theorem and (\ref{sup}). Thus $\lim_{n\to\infty}L^{(n)}({\alpha})$ exists. On the other hand, since
$$
(2\log{r}+\varphi(r))'=r^{-1}(r\varphi'(r)+2),
$$
we have
\begin{equation}\label{e14}
\frac{d}{dr}(2\log{r}+\varphi(r)+2r^{-1}\varphi'(r))
= -2Nr^{-2}\varphi'(r)-2r^{-1}e^{\varphi(r)}.
\end{equation}
We remark that $\varphi'(r)/r\to0 \ \mbox{as} \ r\to\infty$ by Lemma \ref{lemma33}. Integrating (\ref{e14}) from $1$ to $\infty$, we have  
\begin{equation}\label{e15}
L-\varphi(1)-2\varphi'(1)= -2N\int_{1}^{\infty}t^{-2}\varphi'(t) \, dt-2\int_{1}^{\infty}t^{-1}e^{\varphi(t)} \, dt.
\end{equation}
From (\ref{e13}) and (\ref{e15}), we conclude that
$$
\lim_{n\to\infty}L^{(n)}({\alpha})=L.
$$
\end{proof}

\section{Properties  of solution set $S_L$.}

In this section, we will demonstrate the existence of a minimal solution of solution set $S_L$. To prove Theorem \ref{theorem5}, we prepare the following lemma.
\begin{lemma}[\cite{Naito1}  Lemma 3.1]\label{lemma3}
Let $S_{l}^{(n)}$ be defined by \rm{(\ref{$S_{L^{(n)}+n}$})}. If $S_{l}^{(n)}\neq\emptyset,$ then $S_{l}^{(n)}$ has a minimal solution.
\end{lemma}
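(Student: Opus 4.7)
The plan is to reduce the statement to Theorem \ref{theoremA} (Naito) via a direct rescaling of the dependent variable. The ODE \eqref{psi} defining $S_l^{(n)}$ differs from the power-type equation \eqref{radial solution} only by the factor $n^{-n}$ attached to the nonlinearity. Setting $w(r) := n^{-n/(n-1)} \psi^{(n)}(r)$ and using the identity $n \cdot \tfrac{n}{n-1} - \tfrac{n}{n-1} - n = 0$, I would check by a direct computation that $w$ satisfies
\[
w'' + \left(\frac{N-1}{r} + \frac{r}{2}\right) w' + \frac{1}{n-1} w + w^n = 0, \qquad w'(0) = 0,
\]
which is precisely \eqref{radial solution} with exponent $p = n > 1$. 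Positivity transfers immediately, and the asymptotic condition $\lim_{r \to \infty} r^{2/(n-1)} \psi^{(n)}(r) = l$ becomes $\lim_{r \to \infty} r^{2/(n-1)} w(r) = l\, n^{-n/(n-1)}$.

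Consequently, the map $\psi^{(n)} \mapsto w$ is an order-preserving bijection between $S_l^{(n)}$ and the power-type solution set $S_{\tilde l}$ associated with $p = n$ and $\tilde l := l\, n^{-n/(n-1)}$. From $S_l^{(n)} \neq \emptyset$ it then follows that $S_{\tilde l} \neq \emptyset$, so Theorem \ref{theoremA} produces a minimal element $\underline w \in S_{\tilde l}$. Pulling back through the rescaling, $\underline{\psi}^{(n)} := n^{n/(n-1)} \underline w$ is the desired minimal element of $S_l^{(n)}$.

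There is no substantive analytic obstacle, since all the work has been absorbed into Theorem \ref{theoremA}; the only care needed is in the bookkeeping of exponents in the rescaling. If a self-contained argument were preferred instead, one would parametrize \eqref{psi} by $\alpha = \psi^{(n)}(0)$, set $\alpha^* := \inf\{\alpha > 0 : \psi^{(n)}_\alpha \in S_l^{(n)}\}$, use uniqueness for the IVP to exclude crossings (so that $\alpha_1 < \alpha_2$ forces $\psi^{(n)}_{\alpha_1} < \psi^{(n)}_{\alpha_2}$ wherever both are defined), and pass to the limit by continuous dependence on initial data. The delicate step in that alternative route would be showing that the asymptotic rate $l$ is preserved under the limit $\alpha_k \searrow \alpha^*$; this can be handled using the uniform decay estimate \eqref{eq7} of Proposition \ref{lemma5'} together with an integral identity analogous to \eqref{e11} and dominated convergence.
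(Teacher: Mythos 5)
Your reduction is correct and is essentially the route the paper takes: the paper offers no argument of its own for this lemma, simply citing Naito's Lemma 3.1 (restated here as Theorem \ref{theoremA}), and your rescaling $w = n^{-n/(n-1)}\psi^{(n)}$ supplies exactly the order-preserving bijection between $S_l^{(n)}$ and the power-type set $S_{\tilde l}$ with $p=n$, $\tilde l = l\,n^{-n/(n-1)}>0$, that justifies that citation. The exponent bookkeeping is right ($c^{\,n-1}=n^{-n}$ forces $c=n^{-n/(n-1)}$), minimality transfers under the order-preserving map, and nothing further is needed.
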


\begin{proof}[Proof of \rm{Theorem 1}]  Let $\varphi\in{S_L}$ with $\varphi(0)=\alpha$. Assume that $\varphi^{(n)}=\varphi^{(n)}_{\alpha}$ and let $L^{(n)
 }=L^{(n)}(\alpha)$ be defined by \rm{Theorem \ref{theorem4}}. $\psi^{(n)}=\varphi^{(n)}+n$. Take $n\in{\mathbb{N}}$ so large that $L^{(n)}+n>0$. Then we have
$$
\lim_{r\to\infty}r^{2/(n-1)}\psi^{(n)}(r)>0,
$$
that is, $\psi^{(n)}\in{S_{L^{(n)
 }+n}^{(n)}}.$ Hence there exists a minimal solution $\underline{\psi}^{(n)}\in{S_{L^{(n)
 }+n}^{(n)}}$  by Lemma \ref{lemma3}. We remark that $\underline{\psi}^{(n)}$  does not depend on $\varphi(0)=\alpha.$ Since  $\underline{\psi}^{(n)}$ satisfies (\ref{va}), we have the following integral equations:
\begin{eqnarray*}\label{eq87} 
\underline{\psi}^{(n)}(r)&=&\underline{\psi}^{(n)}(0)-\int_0^r\frac{1}{\rho_N(s)}\int_0^s\rho_N(t)\biggl[\frac{1}{n-1}\underline{\psi}^{(n)}(t)+\biggl(\frac{\underline{\psi}^{(n)}(t)}{n}\biggr)^n\biggr] \, dt \, ds, 
\\
\label{eq88}
{\underline{\psi}^{(n)'}}(r)&=&-\frac{1}{\rho_N(r)}\int_0^r\rho_N(s)\biggr[\frac{1}{n-1}\underline{\psi}^{(n)}(s)+\biggl(\frac{\underline{\psi}^{(n)}(s)}{n}\biggr)^n\biggr] \, ds,
\end{eqnarray*}
where $\rho_N(r)=r^{N-1}e^{r^2/4}$.
Put $\underline{\varphi}^{(n)}=\underline{\psi}^{(n)}-n.$
Since $\underline{\psi}^{(n)}(r)\leq{\psi}^{(n)}(r) \ (r>0),$ we have $\underline{\varphi}^{(n)}(0)\leq\alpha.$
We now claim that $\{\underline{\varphi}^{(n)}(0)\}$ is bounded below. We integrate equation (\ref{eq9}) with $\psi^{(n)}$ replaced by $\underline{\psi}^{(n)}$ from $1$ to $r.$ Then 
\begin{equation}\label{1}
\begin{split}
&r^{2/(n-1)}\underline{\psi}^{(n)}(r)+2r^{2/(n-1)-1}{\underline{\psi}^{(n)}}'(r)-\underline{\psi}^{(n)}(1)-2{\underline{\psi}^{(n)}}'(1) \\
&= 2(\frac{2}{n-1}-N)\int_1^r t^{2/(n-1)-2}{\underline{\psi}^{(n)}}'(t) \, dt-2\int_1^rt^{2/(n-1)-1}\biggl(\frac{\underline{\psi}^{(n)}(t)}{n}\biggr)^n \, dt, 
\end{split}
\end{equation}
since $\displaystyle \lim_{r\to0}r^{2/(n-1)}\underline{\varphi}^{(n)}(r)=0,  \ 2\lim_{r\to0}r^{2/(n-1)-1}{\underline{\varphi}^{(n)}}'(r)=0.$
Letting $r\to\infty$ in (\ref{1}), we have
\begin{equation}\label{e11}
\begin{split}
&L^{(n)}+n-\underline{\psi}^{(n)}(1)-2{\underline{\psi}^{(n)}}'(1) \\
 &= 2(\frac{2}{n-1}-N)\int_1^{\infty} t^{2/(n-1)-2}{\underline{\psi}^{(n)}}'(t) \, dt-2\int_1^{\infty}t^{2/(n-1)-1}\biggl(\frac{\underline{\psi}^{(n)}(t)}{n}\biggr)^n \, dt. 
\end{split}
\end{equation}
By $\lim_{n\to\infty}L^{(n)}=L,$ there exists $C>0$ such that 
\begin{equation}\label{eqq5}
|L^{(n)}|\leq C \quad \mbox{for} \ n\in{\mathbb{N}}.
\end{equation}
Since Proposition \ref{lemma5'} and (\ref{eqq5}), we have
\begin{eqnarray*}
|\underline{\psi}^{(n)}(1)-n|&\leq&|L^{(n)}|+{|\underline{\psi}^{(n)}}'(1)| \\
&&  \hspace{-3cm}+2(\frac{2}{n-1}-N)\int_1^\infty t^{2/(n-1)-2}|{\underline{\psi}^{(n)}}'(t)| \, dt+2\int_1^{\infty}t^{2/(n-1)-1}\biggl(\frac{|\underline{\psi}^{(n)}(t)|}{n}\biggr)^n \, dt \\
&\leq&2C+2C\int_1^\infty t^{-3} \, dt \\
&\leq&C
\end{eqnarray*}
Thus $\{\underline{\psi}^{(n)}(1)-n\}_{n\in{\mathbb{N}}}$ is bounded. Then there exists $C>0$ such that $|\underline{\psi}^{(n)}(1)-n|\leq C.$ Since $\psi^{(n)}(r)$  is non increasing in $r>0,$ we obtain 
$$
-C\leq\underline{\psi}^{(n)}(1)-n\leq\underline{\psi}^{(n)}(0)-n
$$
Therefore $\{\underline{\psi}^{(n)}(0)-n\}_{n\in{\mathbb{N}}}$ is bounded. By the Bolzano-Weierstrass theorem, there exists a subsequence $\underline{\varphi}^{(n_k)}(0)$ of $\underline{\varphi}^{(n)}(0).$ Then $\underline{\varphi}^{(n_k)}$ satisfies the following:
\begin{eqnarray*}\label{eq89} 
\underline{\varphi}^{(n_k)}(r)&=&\underline{\varphi}^{(n_k)}(0)-\int_0^r\frac{1}{\rho_N(s)}\int_0^s\rho_N(t)\biggl[\frac{1}{n-1}(\underline{\varphi}^{(n_k)}(t)+n)+\biggl(1+\frac{\underline{\varphi}^{(n_k)}(t)}{n}\biggr)^n\biggr] \, dt \, ds, 
\\
\label{eq90}
{\underline{\varphi}^{(n_k)}}'(r)&=&-\frac{1}{\rho_N(r)}\int_0^r\rho_N(s)\biggr[\frac{1}{n-1}(\underline{\varphi}^{(n_k)}(s)+n)+\biggl(1+\frac{\underline{\varphi}^{(n_k)}(s)}{n}\biggr)^n\biggr] \, ds,
\end{eqnarray*}
where $\rho_N(r)=r^{N-1}e^{\frac{r^2}{4}}.$ By the same argument as that in Theorem \ref{theorem3}, $\underline{\varphi}^{(n_k)}$ converges to some $\underline{\varphi}$  uniformly in ${[0,r_0]}$. In particular, $\underline{\varphi}^{(n_k)}$ converges pointwisely to $\underline{\varphi}.$ We show that  $\displaystyle \lim_{r\to\infty}(\underline{\varphi}(r)+2\log{r})=L.$ since (\ref{1}), we have
\begin{equation}
\begin{split}\label{eqq6}
&r^{2/(n-1)}\underline{\varphi}^{(n)}(r)+nr^{2/(n-1)}-n+2r^{2/(n-1)-1}{\underline{\varphi}^{(n)}}'(r)-\underline{\varphi}^{(n)}(1)-2{\underline{\varphi}^{(n)}}'(1) \\
&= 2(\frac{2}{n-1}-N)\int_1^r t^{2/(n-1)-2}{\underline{\varphi}^{(n)}}'(t) \, dt-2\int_1^rt^{2/(n-1)-1}\biggl(1+\frac{\underline{\varphi}^{(n)}(t)}{n}\biggr)^n \, dt
\end{split}
\end{equation}
Letting $n\to\infty$  in (\ref{eqq6}), we have 
\begin{equation}
\begin{split}\label{eqq7}
\underline{\varphi}(r)+2\log{r}+2r^{-1}{\underline{\varphi}}'(r)-&\underline{\varphi}(1)-2{\underline{\varphi}}'(1) \\
&= -2N\int_1^r t^{-2}{\underline{\varphi}}'(t) \, dt-2\int_1^rt^{-1}e^{\underline{\varphi}(t)} \, dt, 
\end{split}
\end{equation}
for $r>0.$ Letting $r\to\infty$ in (\ref{eqq7}), we obtain 
\begin{equation}
\begin{split}\label{eqq8}
\lim_{r\to\infty}\bigl(\underline{\varphi}(r)+2\log{r}\bigr)-&\underline{\varphi}(1)-2{\underline{\varphi}}'(1) \\
&= -2N\int_1^{\infty} t^{-2}{\underline{\varphi}}'(t) \, dt-2\int_1^{\infty}t^{-1}e^{\underline{\varphi}(t)} \, dt, 
\end{split}
\end{equation}
by $\displaystyle \lim_{r\to\infty}r^{-1}{\underline{\varphi}}'(r)=0.$ On the other hand, Letting $n\to\infty$ in (\ref{e15}) with $\varphi$   replaced by $\underline{\varphi},$ we obtain 
\begin{equation}\label{eqq9}
L-\underline{\varphi}(1)-2\underline{\varphi}'(1)= -2N\int_{1}^{\infty}t^{-2}\underline{\varphi}'(t) \, dt-2\int_{1}^{\infty}t^{-1}e^{\underline{\varphi}(t)} \, dt.
\end{equation}
From (\ref{eqq8}) and (\ref{eqq9}), we obtain  $\displaystyle \lim_{r\to\infty}(\underline{\varphi}(r)+2\log{r})=L.$  Therefore $\underline{\varphi}\in{S_L}.$
 From $\underline{\varphi}^{(n_k)}\leq \varphi^{(n_k)}$, letting $n_k\to\infty$, we conclude that $\underline{\varphi}\leq \varphi$.  Note that $\underline{\varphi}$ does not depend on  $\varphi.$ Therefore $\underline{\varphi}$ is a minimal solution of $S_L$ ,i.e., $\underline{\varphi}\leq\varphi \ \mbox{for all} \ \varphi\in{S_L}.$
\end{proof}

\begin{corollary}\label{Cor1}
Assume that there exist at least two solutions $\underline{\varphi}$ and $\varphi$ of $S_L,$ where $\underline{\varphi}$ is a minimal solution of $S_L.$ Then there exist at least two solutions $\underline{\psi}^{(n)}$ and ${\psi}^{(n)}$ of $S^{(n)}_{L^{(n)}+n},$ where $\underline{\psi}^{(n)}$ is a minimal solution of $S^{(n)}_{L^{(n)}+n}.$
\end{corollary}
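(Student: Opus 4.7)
The plan is to construct both solutions of $S^{(n)}_{L^{(n)}+n}$ starting from the non-minimal member $\varphi\in S_L$, and then verify that they are distinct for all large $n$. Setting $\alpha:=\varphi(0)$, Theorem \ref{theorem3} produces a sequence $\{\varphi^{(n)}_{\alpha}\}$ of solutions to (\ref{n}) with $\varphi^{(n)}_{\alpha}(0)=\alpha$ converging locally uniformly to $\varphi$, and Theorem \ref{theorem4} supplies $L^{(n)}:=L^{(n)}(\alpha)\to L$ such that $\psi^{(n)}:=\varphi^{(n)}_{\alpha}+n\in S^{(n)}_{L^{(n)}+n}$. In particular $S^{(n)}_{L^{(n)}+n}\neq\emptyset$, so Lemma \ref{lemma3} produces a minimal solution $\underline{\psi}^{(n)}$ of this set; this is the natural candidate for the minimal solution asserted by the corollary.

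The remaining task is to verify that $\underline{\psi}^{(n)}\not\equiv\psi^{(n)}$ for all sufficiently large $n$. For this I would re-run the compactness argument from the proof of Theorem \ref{theorem5}: Proposition \ref{lemma5'} combined with the integrated relation (\ref{1}) shows $\{\underline{\psi}^{(n)}(0)-n\}$ to be bounded, so by Bolzano--Weierstrass a subsequence $\underline{\psi}^{(n_k)}-n_k$ converges locally uniformly to a limit $w\in S_L$. Passing to the limit in (\ref{eqq6}) and comparing with (\ref{eqq9}) identifies $w$ as the minimal element of $S_L$, so $w\equiv\underline{\varphi}$. Meanwhile, Theorem \ref{theorem3} yields $\psi^{(n_k)}-n_k\to\varphi$. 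If $\underline{\psi}^{(n)}\equiv\psi^{(n)}$ were to hold along some subsequence, passing to the limit in this equality would give $\underline{\varphi}\equiv\varphi$, contradicting the hypothesis that $\underline{\varphi}$ and $\varphi$ are distinct members of $S_L$.

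Consequently, for all $n$ large enough, $S^{(n)}_{L^{(n)}+n}$ contains the two distinct solutions $\underline{\psi}^{(n)}$ (minimal, by Lemma \ref{lemma3}) and $\psi^{(n)}$, as required. I expect the main obstacle to be the identification $w\equiv\underline{\varphi}$: one must confirm that the minimal solution of the approximate set $S^{(n)}_{L^{(n)}+n}$ actually tracks the \emph{minimal} element of $S_L$ in the $n\to\infty$ limit, rather than some other element. This is precisely the content of the proof of Theorem \ref{theorem5}, where $w$ is shown to satisfy $w\le\varphi'$ for every $\varphi'\in S_L$ by applying the same construction to an arbitrary initial value $\alpha'=\varphi'(0)$ and using $\underline{\psi}^{(n)}\le\psi^{(n)}_{\alpha'}$. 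Once that identification is in place, the strict separation of $\underline{\psi}^{(n)}$ and $\psi^{(n)}$ follows from the contradiction argument above.
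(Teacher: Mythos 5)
Your proposal matches the paper's own proof, which simply observes that the construction in the proof of Theorem \ref{theorem5} already produces the minimal solution $\underline{\psi}^{(n)}$ of $S^{(n)}_{L^{(n)}+n}$ (via Lemma \ref{lemma3}) converging to $\underline{\varphi}$, alongside $\psi^{(n)}=\varphi^{(n)}_{\alpha}+n$ converging to $\varphi$. You are in fact slightly more careful than the paper in making explicit the final step that $\underline{\psi}^{(n)}\not\equiv\psi^{(n)}$ for large $n$ because their normalized limits $\underline{\varphi}$ and $\varphi$ are distinct.
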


\begin{proof}
In the proof of Theorem \ref{theorem5}, there exist $\underline{\psi}^{(n)}$ and  ${\psi}^{(n)}$ of  $S^{(n)}_{L^{(n)}+n}$ such that $\underline{\varphi}^{(n)}:=\underline{\psi}^{(n)}-n$ and $\varphi^{(n)}:={\psi}^{(n)}-n$ converge to $\underline{\varphi}$ and $\varphi$, respectively, where $\underline{\psi}^{(n)}$ is a minimal solution of $S^{(n)}_{L^{(n)}+n}.$ 
\end{proof}

We will show the following properties of $S_L$. 

\begin{proposition}\label{prop7}
Let $S_L$ be given by \rm{(\ref{S_L})}. Assume that there exist at least two solutions $\underline{\varphi}_L$ and $\varphi_L$ of $S_L,$ where $\underline{\varphi}_L$ is a minimal solution of $S_L$. 
\begin{enumerate}
\item If $\varphi\in{S_L}$ satisfies $\varphi(r)\leq \varphi_L(r) \ for \ r>0$ then $\varphi(r)\equiv\underline{\varphi}_L(r)$ or $\varphi(r)\equiv \varphi_L(r) \ for \ r>0$.
\item Assume that $\varphi$ is a solution to (\ref{e}) satisfying $\varphi'(0)=0$ and $\varphi(r)\geq \varphi_L(r) \ for \ r\geq0$. Then $\varphi(r)\equiv \varphi_L(r) \ for \ r\geq0.$
\item Let $\varphi\in{S_{L_0}}$ with some $L_0\in(0,L]$. Assume that $\varphi(r)\leq \underline{\varphi}_L(r) \ for \ r\geq0$. Then $\varphi\in{S_{L_0}}$ is a minimal solution.
\item There exists no positive solution $\varphi\in{C^2(0,\infty)}$ to (\ref{e}) satisfying $\varphi(r)>\varphi_L(r) \ for  \ r\in{(0, \infty)}$ and $\varphi(r)\to\infty \ as  \ r\to0$.
\end{enumerate}
\end{proposition}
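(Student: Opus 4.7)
The plan is to prove each part by transporting the corresponding structural property of $S_l^{(n)}$ for the power-type equation (which is due to Naito, see \cite{Naito1} and \cite{Naito3}) through the approximation scheme developed in Section 2, and then passing to the limit $n\to\infty$. Concretely, given the hypothesized non-minimal $\varphi_L\in S_L$ and a minimal $\underline{\varphi}_L\in S_L$ from Theorem \ref{theorem5}, Corollary \ref{Cor1} produces a minimal solution $\underline{\psi}^{(n)}$ and a non-minimal solution $\psi^{(n)}_L=\varphi^{(n)}_L+n$ in $S^{(n)}_{L^{(n)}+n}$ such that $\underline{\psi}^{(n)}-n\to\underline{\varphi}_L$ and $\psi^{(n)}_L-n\to\varphi_L$ locally uniformly by Theorem \ref{theorem3}. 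Any additional candidate $\varphi$ appearing in (i)--(iv) will similarly be approximated through its initial value $\alpha=\varphi(0)$ by a sequence $\psi^{(n)}\in S^{(n)}_{L^{(n)}(\alpha)+n}$.

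For (i), I would apply to $\psi^{(n)},\psi^{(n)}_L,\underline{\psi}^{(n)}$ the known dichotomy for $S^{(n)}_{L^{(n)}+n}$ saying that any element lying pointwise below a non-minimal solution must coincide with either the minimal one or the non-minimal one. This gives, for each $n$, one of two alternatives; extracting a subsequence along which the same alternative holds for all $n$ and invoking the local uniform convergence produces $\varphi\equiv\underline{\varphi}_L$ or $\varphi\equiv\varphi_L$. For (ii), the analogous rigidity at the power level states that no regular solution of (\ref{va}) can strictly dominate a given solution in the same $S^{(n)}_{L^{(n)}+n}$: at the approximate level this is standard from the ODE comparison principle in (\ref{va}) combined with the behavior of the asymptotic constant, and passing to the limit yields $\varphi\equiv\varphi_L$.

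For (iii), I would observe that the approximations of $\varphi\in S_{L_0}$, which satisfy $\psi^{(n)}\le\underline{\psi}^{(n)}$ by the comparison principle (after verifying the inequality persists at $r=0$ from the ordering $\varphi(0)\le\underline{\varphi}_L(0)$), must themselves be minimal in their respective $S^{(n)}_{L_0^{(n)}+n}$, since no strictly smaller regular solution lies below the minimal $\underline{\psi}^{(n)}$. Taking the limit preserves minimality as in the end of the proof of Theorem \ref{theorem5}. For (iv), I would argue by contradiction: a putative singular solution $\varphi$ dominating $\varphi_L$ would, via truncation at small radii and ODE comparison applied to (\ref{n}) with $\psi^{(n)}=\varphi^{(n)}+n$, yield for large $n$ a singular solution of the power-type equation above $\psi^{(n)}_L$, contradicting the corresponding non-existence result in \cite{Naito1, Naito3}.

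The principal obstacle I anticipate is the bookkeeping across approximation levels: the asymptotic constants $L^{(n)}(\alpha)$ depend on the initial value $\alpha=\varphi(0)$, so the approximants of $\varphi$ and of $\varphi_L$ a priori lie in \emph{different} sets $S^{(n)}_{L^{(n)}(\cdot)+n}$ for each fixed $n$, and one cannot directly quote the power-type dichotomy unless $L^{(n)}(\alpha)=L^{(n)}(\varphi_L(0))$. The remedy is to use Theorem \ref{theorem4} (so that both sequences of asymptotic constants converge to the same $L$) together with the monotone ordering propagated from $r=0$ by the ODE in (\ref{va}), so that in the limit the relevant comparison between regular solutions of (\ref{e}) is recovered even though the power-type assertion is applied one $n$ at a time. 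The other delicate point, parallel to the proof of Theorem \ref{theorem5}, is uniform control on the constants so that the Bolzano--Weierstrass extraction of a convergent subsequence of initial values is legitimate; this should follow from Proposition \ref{lemma5'} together with the uniform bound on $L^{(n)}$.
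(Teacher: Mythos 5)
Your proposal follows essentially the same route as the paper: approximate $\varphi$, $\varphi_L$, $\underline{\varphi}_L$ by solutions $\psi^{(n)}$ of the power-type equation via Theorems \ref{theorem3}--\ref{theorem4} and Corollary \ref{Cor1}, invoke Naito's structural results for $S^{(n)}_{L^{(n)}+n}$ (Lemma \ref{lemma7}) at each level $n$, and pass to the limit (the paper phrases (i) and (ii) as contradiction arguments via strict ordering from uniqueness of the initial value problem, which is only a cosmetic difference from your subsequence extraction). The bookkeeping issue you flag about the $\alpha$-dependence of $L^{(n)}(\alpha)$ is genuine and is in fact passed over silently in the paper's own proof, so identifying it is a point in your favor rather than a defect.
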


In order to prove Proposition \ref{prop7},  we prepare the following lemma.

\begin{lemma}[Naito \cite{Naito3} Proposition 4.1.]\label{lemma7}
Let $S^{(n)}_{L^{(n)}+n}$ be given by \rm{(\ref{$S_{L^{(n)}+n}$})}. Assume that there exist at least two solutions $\underline{\psi}^{(n)}_L$ and ${\psi}^{(n)}_L$ of $S^{(n)}_{L^{(n)}+n},$ where $\underline{\psi}^{(n)}_L$ is a minimal solution of $S^{(n)}_{L^{(n)}+n}$. 
\begin{enumerate}
\item If ${\psi}^{(n)}\in{S^{(n)}_{L^{(n)}+n}}$ satisfies ${\psi}^{(n)}(r)\leq {\psi}^{(n)}_L(r) \ for \ r>0$ then ${\psi}^{(n)}(r)\equiv\underline{\psi}^{(n)}_L(r)$ or ${\psi}^{(n)}(r)\equiv {\psi}^{(n)}_L(r) \ for \ r>0$.
\item Assume that ${\psi}^{(n)}$ is a solution to (\ref{psi}) satisfying ${\psi}^{(n)}(r)\geq {\psi}^{(n)}_L(r) \ for \ r\geq0$. Then ${\psi}^{(n)}(r)\equiv {\psi}^{(n)}_L(r) \ for \ r\geq0.$
\item Let ${\psi}^{(n)}\in{{S^{(n)}_{L^{(n)}_0+n}}}$ with some $L_0^{(n)}\in(0,L^{(n)}]$. Assume that ${\psi}^{(n)}(r)\leq \underline{{\psi}}^{(n)}_L(r) \ for \ r\geq0$. Then ${\psi}^{(n)}\in{S^{(n)}_{L^{(n)}_0+n}}$ is a minimal solution.
\item There exists no positive solution ${\psi}^{(n)}\in{C^2(0,\infty)}$ to (\ref{psi}) satisfying ${\psi}^{(n)}(r)>{\psi}^{(n)}_L(r) \ for  \ r\in{(0, \infty)}$ and ${\psi}^{(n)}(r)\to\infty \ as  \ r\to0$.
\end{enumerate}

\end{lemma}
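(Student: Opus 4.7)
The plan is to deduce Lemma \ref{lemma7} from Naito \cite{Naito3}, Proposition 4.1, via a direct rescaling. First I set $\psi^{(n)}(r) = n^{n/(n-1)}\bar\psi^{(n)}(r)$; substituting in (\ref{psi}) and using $\frac{1}{n-1}\psi^{(n)} + (\psi^{(n)}/n)^n = n^{n/(n-1)}(\frac{1}{n-1}\bar\psi^{(n)} + (\bar\psi^{(n)})^n)$, one obtains
$$
\bar\psi'' + \biggl(\frac{N-1}{r}+\frac{r}{2}\biggr)\bar\psi' + \frac{1}{n-1}\bar\psi + \bar\psi^n = 0,
$$
which is exactly the radial profile equation (\ref{radial solution}) with $p=n$. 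Under the same substitution the set $S^{(n)}_{L^{(n)}+n}$ corresponds, bijectively and in an order-preserving way, to Naito's set with asymptotic level $(L^{(n)}+n)\,n^{-n/(n-1)}$, and the minimal element is sent to the minimal element. Because the standing hypothesis $3\le N\le 9$ gives $p_{JL}=\infty$, the Joseph-Lundgren subcritical condition $p_F<n<p_{JL}$ is satisfied for every integer $n\ge 2$, so Naito's proposition applies, and translating its four assertions back through the rescaling yields exactly (i)-(iv) of Lemma \ref{lemma7}.

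For orientation, the core of Naito's argument is an Emden-Fowler reduction $t=\log r$, $v(t)=r^{2/(n-1)}\bar\psi(r)$, which recasts the radial ODE as an autonomous planar system. The unique positive equilibrium $v\equiv l^*$ corresponds to the singular steady state, and in the range $p_F<n<p_{JL}$ its linearization is a focus, so the shooting map $\alpha\mapsto\lim_{t\to\infty}v_\alpha(t)$ oscillates about $l^*$ and every positive level set is discrete. A Sturmian/zero-number argument on the linearized radial equation then forbids any further solution with the same asymptotic limit between two consecutive members of such a level set, which produces the dichotomy in (i). Parts (ii) and (iii) are monotonicity and minimality consequences of the same phase portrait: two ordered trajectories with a common asymptotic limit must coincide, and a trajectory lying below the minimal one in a smaller level must itself be leftmost, hence minimal. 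Part (iv) is a phase-plane exclusion: any singular trajectory dominating $\bar\psi^{(n)}_L$ near $r=0$ would have to cross the trajectory of $\bar\psi^{(n)}_L$, contradicting the assumed strict inequality $\bar\psi^{(n)}>\bar\psi^{(n)}_L$ on $(0,\infty)$.

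The main obstacle, if one tried to reproduce the result self-contained rather than quote Naito, is the intersection-comparison bookkeeping behind the oscillation of the shooting map and the discreteness of its level sets; both depend on the spectral computation at $v=l^*$ that defines $p_{JL}$ and on Sturm-type arguments for the linearized radial equation, and both use the Joseph-Lundgren subcritical hypothesis in an essential way.
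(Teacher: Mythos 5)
Your proposal is correct and follows essentially the same route as the paper: the paper does not prove Lemma \ref{lemma7} at all but quotes it as Naito's Proposition 4.1, and you likewise reduce to that citation. The one genuine addition you make is the explicit rescaling $\psi^{(n)}=n^{n/(n-1)}\bar\psi^{(n)}$, which is actually needed and which the paper leaves implicit: the nonlinearity in (\ref{psi}) is $(\psi^{(n)}/n)^n$ rather than $(\psi^{(n)})^n$, so Naito's proposition does not apply verbatim, and your computation correctly shows the substitution turns (\ref{psi}) into the profile equation with $p=n$ while preserving positivity, order, minimality, and rescaling the asymptotic level by $n^{-n/(n-1)}$ (so all four assertions transfer). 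Your check that $p_F<n<p_{JL}$ holds for every integer $n\ge 2$ when $3\le N\le 10$ is exactly the point the paper itself flags in the remark following Theorem \ref{main theorem}; the second, ``orientation'' paragraph sketching Naito's phase-plane argument is not needed for the proof and is not verified here, but nothing in it is objectionable.
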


\begin{proof}[Proof of Proposition \ref{prop7}]
Let  $\underline{\psi}^{(n)}_L$ and $\psi^{(n)}_L$ be given by Corollary \ref{Cor1}. \\
(i) Since $\varphi\in{S_L},$ there exists $\psi^{(n)}$ and $L^{(n)}$  by Theorem \ref{theorem4}. Since $\underline{\varphi}_L\in{S_L}$ is a minimal solution of $S_L,$ we have $\underline{\varphi}_L(r)\leq \varphi(r) \ for \ r\geq0$. Assume to the contrary that $\underline{\varphi}_L\neq \varphi$ and $\varphi\neq \varphi_L$. Then by the uniqueness of the  initial value problems to (\ref{e}), we get $\underline{\varphi}_L(r)<\varphi(r)<\varphi_L(r) \ for \ r\geq0,$ hence there exists $N\in{\mathbb{N}}$ such that $\underline{\psi}^{(n)}_L(r)<\psi^{(n)}(r)<{\psi}^{(n)}_L (r)\ for \ r\geq0, n\geq N,$ and $\psi^{(n)}\in{S^{(n)}_{L^{(n)}+n}}.$ By Lemma \ref{lemma7} (i), we have ${\psi}^{(n)}(r)\equiv\underline{\psi}^{(n)}_L(r)$ or ${\psi}^{(n)}(r)\equiv {\psi}^{(n)}_L(r) \ for \ r>0$. This is contradiction. Therefore $\varphi(r)\equiv\underline{\varphi}_L(r)$ or $\varphi(r)\equiv \varphi_L(r) \ for \ r>0$. \\
(ii) The proof is given by contradiction argument. Assume to the contrary that $\varphi\neq \varphi_L$. Then, by the uniqueness of the initial value problems to equation (\ref{e}), we have $\underline{\varphi}_L(r)<\varphi(r)<\varphi_L(r) \ \mbox{for\ all} \ r>0.$ Then there exist $N\in{\mathbb{N}}$ such that $\underline{\psi}^{(n)}_L(r)<\psi^{(n)}_L(r)<{\psi}^{(n)} (r)\ for \ r\geq0, n\geq N.$ By Lemma \ref{lemma7} (ii) we have ${\psi}^{(n)}(r)\equiv {\psi}^{(n)}_L(r) \ for \ r\geq0.$ Letting $n\to\infty,$ we obtain $\varphi(r)\equiv \varphi_L(r) \ for \ r\geq0.$ This is contradiction. Therefore $\varphi(r)\equiv \varphi_L(r) \ for \ r\geq0.$ \\
(iii) If $L_0=L$, we see that $\varphi\in{S_{L_0}}$ is a minimal solution. Let $L_0<L$. Assume to the contrary that $\varphi\in{S_{L_0}}$ is  a non-minimal solution. Then this contradicts this Proposition \ref{prop7} (ii). Therefore, $\varphi\in{S_{L_0}}$ is a minimal solution. \\
(iv) Assume to the contrary that there exists a positive solution $\varphi\in{C^2(0,\infty)}$ to (\ref{e}) satisfying the following condition:
$$
\varphi(r)>\varphi_L(r), \ r\in{(0,\infty)}, \quad \lim_{r\to0}\varphi(r)=\infty.
$$
For $\delta>0,$ let $\psi^{(n)}\in{C^2[\delta,\infty)}$ be the positive solution to initial value problem:
$$
\begin{cases}
 \displaystyle
{\psi^{(n)}}''+\biggl(\frac{N-1}{r}+\frac{r}{2}\biggr){\psi^{(n)}}'+\frac{1}{n-1}\psi^{(n)}+\biggl(\frac{\psi^{(n)}}{n}\biggr)^n=0 \\
{\psi^{(n)}}(\delta)=\varphi(\delta)+n, \quad {\psi^{(n)}}'(\delta)=\varphi'(\delta).
\end{cases}
$$
Since $\lim_{r\to0}\psi^{(n)}(r)=\lim_{r\to0}\varphi(r)+n\geq\lim_{r\to0}\varphi(r)=\infty,$ there exists a positive solution ${\psi}^{(n)}\in{C^2(0,\infty)}$ to (\ref{psi}) satisfying ${\psi}^{(n)}(r)>{\psi}^{(n)}_L(r) \ for  \ r\in{(0, \infty)}$ and ${\psi}^{(n)}(r)\to\infty \ as  \ r\to0.$ From Lemma \ref{lemma7} there exists no positive solution ${\psi}^{(n)}\in{C^2(0,\infty)}$ to (\ref{psi}) satisfying ${\psi}^{(n)}(r)>{\psi}^{(n)}_L(r) \ for  \ r\in{(0, \infty)}$ and ${\psi}^{(n)}(r)\to\infty \ as  \ r\to0$. This is  contradiction. Therefore, there exists no positive solution $\varphi\in{C^2(0,\infty)}$ to (\ref{e}) satisfying $\varphi(r)>\varphi_L(r) \ for  \ r\in{(0, \infty)}$ and $\varphi(r)\to\infty \ \mbox{as}  \ r\to0$.
\end{proof}

\section{Proof of Theorem \ref{main theorem}}

We begin this section by introducing the definition of weak supersolution and subsolution. We say that a function $u$ is  a continuous weak supersolution to (\ref{Cauchy problem}) in  $\mathbb{R}^N\times[0,T]$ if $u$ is a continuous on $\mathbb{R}^N\times[0,T],$ $u(x,0)\geq u_0(x) \ x\in{\mathbb{R}^N}$ and satisfies 
\begin{equation}\label{weak sol}
\int_{\mathbb{R}^N}u(x,t)\xi(x,t) \, dx \biggl|_{t=0}^{t=T'}\geq\int_0^{T'}\!\!\!\int_{\mathbb{R}^N}[u(x,t)(\xi_t+\Delta\xi)(x,t)+e^{u(x,t)}\xi(x,t)] \, dx \, dt,
\end{equation}
for all $T'\in{[0,T]}$ and for all $\xi\in{C^{2,1}(\mathbb{R}^N\times[0,T])}$ with $\xi\geq0$ such that $\mbox{supp}\, {\xi(\cdot,t)}$ is compact in $\mathbb{R}^N$ for all $t\in[0,T]$. A continuous weak subsolution to (\ref{Cauchy problem}) in  $\mathbb{R}^N\times[0,T]$ is defined in the same way by reversing the inequalities above.\\
We say that a function $\varphi$ is a continuous weak supersolution to (\ref{ee}) in $\mathbb{R}^N$ if $\varphi\in{C(\mathbb{R}^N)}$ satisfies
$$
\int_{\mathbb{R}^N}\biggl[\varphi\biggl(\Delta \eta -\frac{1}{2}y\cdot\nabla \eta-\frac{N}{2}\eta\biggr)+(e^{\varphi}+1)\eta\biggr] \, dy\leq0
$$
for any  $\eta\in{C^2(\mathbb{R}^N)}$ with $\eta\geq0$ such that $\mbox{supp}\, {\eta(\cdot)}$ is compact in $\mathbb{R}^N$. A continuous weak subsolution to (\ref{ee}) in  $\mathbb{R}^N$ is defined in the same way by reversing the inequalities above. \\

Next we introduce comparison principle for problem (\ref{Cauchy problem}).
\begin{lemma}[\cite{Fujishima} Lemma 2.3 (i)]
 Let $\overline{u}$ and $\underline{u}$ be continuous weak supersolution and subsolution to \rm{(\ref{Cauchy problem})} in $\mathbb{R}^N\times[0,T]$, respectively. Assume that $\overline{u}$ and $\underline{u}$ are bounded above and satisfy  $\overline{u}(x,t)-\underline{u}(x,t)\geq-Ae^{B|x|^2} \ \mbox{in} \ \mathbb{R}^N\times[0,T]$ for some constants $A,B>0$. Then $\underline{u}\leq\overline{u} \ \mbox{in} \ \mathbb{R}^N\times[0,T]$ and there exists a classical solution to (\rm{\ref{Cauchy problem}}) satisfying $\underline{u}\leq u\leq\overline{u} \ \mbox{in} \ \mathbb{R}^N\times[0,T].$ 
\end{lemma}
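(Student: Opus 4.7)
The plan is to establish the ordering $\underline{u}\leq\overline{u}$ directly from the weak formulation by testing against a backward Gaussian kernel, and then to produce the interpolating classical solution $u$ by monotone iteration between the ordered pair.

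First I would subtract the weak subsolution inequality for $\underline{u}$ from the weak supersolution inequality for $\overline{u}$. Writing $w:=\overline{u}-\underline{u}$, for any admissible nonnegative test function $\xi$ with compact $x$-support one obtains
\begin{equation*}
\int_{\mathbb{R}^N}w\,\xi\,dx\bigg|_{t=0}^{t=T'}\geq\int_0^{T'}\!\!\int_{\mathbb{R}^N}\bigl[w(\xi_t+\Delta\xi)+(e^{\overline{u}}-e^{\underline{u}})\xi\bigr]\,dx\,dt.
\end{equation*}
Since both $\overline{u}$ and $\underline{u}$ are bounded above by some $M$, the mean value theorem yields $|e^{\overline{u}}-e^{\underline{u}}|\leq e^{M}|w|$, so the reaction term is a bounded Lipschitz perturbation, which opens the door to a Gronwall-type argument.

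Second, to handle the unbounded spatial domain under only the one-sided bound $w\geq -Ae^{B|x|^2}$, I would fix $\tau\in(0,1/(4B))$ and test against $\xi(x,t)=\chi_R(x)(\tau+s-t)^{-N/2}\exp\bigl(-|x-x_0|^2/(4(\tau+s-t))\bigr)$ for $t\in[0,s]$, where $\chi_R$ is a smooth cutoff equal to $1$ on $B_R$ and supported in $B_{2R}$. The unweighted Gaussian satisfies $\xi_t+\Delta\xi=0$, so the only surviving contributions from $\xi_t+\Delta\xi$ come from derivatives of $\chi_R$, supported in $R\leq|x|\leq 2R$. There the Gaussian weight is dominated by $\exp(-R^2/(4(\tau+s)))$, while $w^{-}$ is at most $Ae^{4BR^2}$; the choice $\tau<1/(4B)$ makes the boundary contributions vanish as $R\to\infty$ for sufficiently small $s$. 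Passing to the limit yields a pointwise estimate controlling $w^{-}(x_0,s)$ by a Gaussian average of $w^{-}(\cdot,0)=0$ plus a Lipschitz-in-$w^{-}$ term, and Gronwall in a suitable weighted norm gives $w^{-}\equiv 0$ on a time interval $[0,\delta]$ with $\delta$ depending only on $B$. Iterating the argument on $[k\delta,(k+1)\delta]$ extends $\underline{u}\leq\overline{u}$ to all of $[0,T]$.

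Third, with the ordering in hand I would construct the classical solution by monotone iteration. Set $u_0:=\underline{u}$ and fix $K\geq e^{M}$, and define $u_{k+1}$ as the solution of the linear Cauchy problem
\begin{equation*}
\partial_t u_{k+1}-\Delta u_{k+1}+K u_{k+1}=K u_k+e^{u_k},\qquad u_{k+1}(\cdot,0)=u_0(\cdot).
\end{equation*}
Because $s\mapsto Ks+e^{s}$ is nondecreasing on $(-\infty,M]$, the linear parabolic comparison principle gives $u_k\leq u_{k+1}\leq\overline{u}$ inductively, so $\{u_k\}$ is monotone and bounded. Pointwise convergence to some limit $u$, combined with standard interior parabolic regularity for the linear iterates, yields convergence in $C^{2,1}_{\mathrm{loc}}$; the limit is a classical solution of (\ref{Cauchy problem}) satisfying $\underline{u}\leq u\leq\overline{u}$.

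The main obstacle is the second step: the one-sided weighted bound $w\geq -Ae^{B|x|^2}$ does not directly control $w^{-}$ uniformly in $x$, so the Gaussian test-function argument must be calibrated so that its decay strictly beats $e^{B|x|^2}$. This calibration forces a short maximal time step $\delta<1/(4B)$, and the ordering on the full interval $[0,T]$ is only recovered after finitely many iterations of the local comparison argument.
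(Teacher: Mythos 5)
The paper offers no proof of this statement: it is imported verbatim as Lemma 2.3\,(i) of \cite{Fujishima}, so there is nothing internal to compare your argument against. Your outline --- duality against a cut-off backward Gaussian calibrated so that its decay beats $e^{B|x|^2}$, Gronwall on short time intervals of length $\delta<1/(4B)$, iteration in time, then monotone iteration between the ordered pair --- is the standard route for such Phragm\'en--Lindel\"of-type comparison results and is the right general strategy.

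Two points, however, are genuine gaps rather than omitted routine details. First, in the cutoff step the error term is $\int_0^s\!\int w\,(\xi_t+\Delta\xi)\,dx\,dt$ with $\xi_t+\Delta\xi$ supported in the annulus $R\le|x|\le 2R$ and of no definite sign, so you must bound $|w|=w^++w^-$ there, not only $w^-$. The hypotheses control $w^-\le Ae^{B|x|^2}$ and bound $\overline{u}$ from above, but they say nothing about how fast $\underline{u}$ may tend to $-\infty$, hence nothing about $w^+$; your limit $R\to\infty$ does not close without an additional growth bound on $\underline{u}$ from below (in this paper it is supplied by the standing assumption (\ref{assumption}), $u_0\ge -Ce^{|x|^{2-\varepsilon}}$, but your proof must invoke it explicitly). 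The same two-sided control is needed to pass $\tau\to 0$ and extract pointwise information at time $s$. Second, your monotone iteration is anchored at $u_0:=\underline{u}$ with initial trace $\underline{u}(\cdot,0)$, whereas a classical solution ``to (\ref{Cauchy problem})'' must take the datum $u_0$ of (\ref{Cauchy problem}) itself (which lies between $\underline{u}(\cdot,0)$ and $\overline{u}(\cdot,0)$ but need not equal either); moreover the inductive inequalities $u_k\le u_{k+1}\le\overline{u}$ compare classical iterates with merely continuous weak sub/supersolutions on all of $\mathbb{R}^N$, so each step again invokes the weak comparison principle under the same Gaussian growth calibration --- you should state this dependence and order the argument so it is not circular. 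Both issues are repairable, but as written the proof does not go through from the stated hypotheses alone.
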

We show the following proposition.
\begin{proposition}\label{prop8}
 Suppose that $S_L$ have at least two elements $\underline{\varphi}_L$ and $\varphi_L$, where $\underline{\varphi}_L$ is  a minimal solution of $S_L.$
\begin{enumerate}
\item Assume that $w_0\in{C(\mathbb{R}^N)}$ satisfies $w_0(x)<\varphi_L(|x|) \ \mbox{for} \ x\in{\mathbb{R}^N}$. Then there exists a continuous weak supersolution $\overline{w}_0$ to \rm{(\ref{ee})} such that $\overline{w}_0=\overline{w}_0(r), r=|x|$ and satisfies $\overline{w}_0\not\equiv \varphi_L$ and  
\begin{equation}\label{eq91}
w_0(x)<\overline{w}_0(|x|)\leq \varphi_L(|x|), \quad x\in{\mathbb{R}^N}.
\end{equation}
\item Assume that  $w_0\in{C(\mathbb{R}^N)}$ satisfies $w_0(x)>\varphi_L(|x|) \  \mbox{for} \ x\in{\mathbb{R}^N}$. Then there exists a continuous weak subsolution $\underline{w}_0$ to \rm{(\ref{ee})} such that $\underline{w}_0=\underline{w}_0(r), r=|x|$ is  nonincreasing in $r>0$ and satisfies $\underline{w}_0\not\equiv \varphi_L$ and
\begin{equation}\label{eq91'}
\varphi_L(|x|) \leq\underline{w}_0(|x|)< w_0(x), \quad x\in{\mathbb{R}^N}.
\end{equation}
\end{enumerate}
\end{proposition}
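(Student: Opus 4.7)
The plan is to mirror the approximation scheme used throughout the paper: deduce Proposition~\ref{prop8} from its power-type analogue in Naito~\cite{Naito3}, and then pass to the limit $n\to\infty$.

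For part (i), start from the setup produced by Corollary~\ref{Cor1}: there exist a minimal $\underline{\psi}^{(n)}_L$ and a non-minimal $\psi^{(n)}_L$ in $S^{(n)}_{L^{(n)}+n}$ with $\underline{\psi}^{(n)}_L-n\to\underline{\varphi}_L$ and $\psi^{(n)}_L-n\to\varphi_L$ locally uniformly. Define the translated data $w_0^{(n)}(x):=w_0(x)+n$. Since $w_0<\varphi_L$ pointwise, $w_0$ has at most the decay prescribed by (\ref{assumption}), and $\psi^{(n)}_L(|x|)\sim(L^{(n)}+n)|x|^{-2/(n-1)}$ at infinity with $L^{(n)}\to L$, one verifies that $w_0^{(n)}<\psi^{(n)}_L$ on $\mathbb{R}^N$ for all $n$ large enough. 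Now apply the power-type analogue of Proposition~\ref{prop8} (established by Naito for (\ref{psi})) to produce, for each such $n$, a continuous radial weak supersolution $\overline{w}_0^{(n)}$ of the stationary power-type equation satisfying
$$ w_0^{(n)}(x)<\overline{w}_0^{(n)}(|x|)\leq\psi^{(n)}_L(|x|),\qquad \overline{w}_0^{(n)}\not\equiv\psi^{(n)}_L. $$

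Next set $v^{(n)}:=\overline{w}_0^{(n)}-n$. From the two-sided bound $w_0\leq v^{(n)}\leq\varphi^{(n)}_L$ and the a~priori derivative estimates of Proposition~\ref{lemma5'} inherited through the supersolution property, the family $\{v^{(n)}\}$ is locally uniformly bounded and equicontinuous; by Ascoli--Arzel\`a a subsequence converges locally uniformly to a radial $\overline{w}_0$ with $w_0\leq\overline{w}_0\leq\varphi_L$. To identify $\overline{w}_0$ as a weak supersolution to (\ref{ee}), write the weak supersolution inequality for $v^{(n)}+n$ against an arbitrary nonnegative $\eta\in C^2(\mathbb{R}^N)$ with compact support. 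The linear drift piece $v^{(n)}(\Delta\eta-\tfrac12 y\cdot\nabla\eta-\tfrac{N}{2}\eta)$ passes to the limit by dominated convergence on $\mathrm{supp}\,\eta$; the term $\tfrac{1}{n-1}(v^{(n)}+n)\eta$ yields the contribution $\eta$ in the limit (which pairs with the ``$+1$'' in (\ref{ee})); and the nonlinearity $\bigl(1+v^{(n)}/n\bigr)^n\eta$ converges pointwise to $e^{\overline{w}_0}\eta$ with a uniform $L^\infty$ majorant on $\mathrm{supp}\,\eta$ (using the local bound on $v^{(n)}$ and $(1+a/n)^n\leq e^a$ exactly as in Theorem~\ref{theorem3}), so it too passes to the limit.

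The property $\overline{w}_0\not\equiv\varphi_L$ is inherited from a pointwise strict inequality in Naito's construction (typically $\overline{w}_0^{(n)}(0)<\psi^{(n)}_L(0)$, which survives the limit as $\overline{w}_0(0)<\varphi_L(0)$), and the strict lower inequality $w_0<\overline{w}_0$ is enforced at the construction step and preserved by the uniform convergence of $v^{(n)}\to\overline{w}_0$. Part~(ii) is entirely parallel: one translates $w_0>\varphi_L$ into $w_0^{(n)}>\psi^{(n)}_L$, applies the corresponding \emph{subsolution} statement of Naito to obtain a radial nonincreasing $\underline{w}_0^{(n)}$ with $\psi^{(n)}_L\leq\underline{w}_0^{(n)}<w_0^{(n)}$ and $\underline{w}_0^{(n)}\not\equiv\psi^{(n)}_L$, and passes to the limit as above; monotonicity in $r$ is preserved under locally uniform convergence.

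The main obstacle will be the passage to the limit in the weak (super/sub)solution inequality at the nonlinear term, where one must carefully produce a majorant valid uniformly in $n$ on the support of the test function. A secondary difficulty is to ensure that the strict pointwise inequalities built into Naito's construction (needed both for $w_0<\overline{w}_0$ and for $\overline{w}_0\not\equiv\varphi_L$) survive the limit; if a naive limit gives only non-strict inequalities, one would reinforce the conclusion by the strong maximum principle applied to the limit $\overline{w}_0$.
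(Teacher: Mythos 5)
Your route is genuinely different from the paper's: you try to import the whole statement from Naito's power-type analogue and pass to the limit $n\to\infty$, whereas the paper constructs $\overline{w}_0$ (resp.\ $\underline{w}_0$) directly at the exponential level. The paper's proof sets $w_M(r)=\max_{|x|=r}w_0(x)$, uses Lemma \ref{lemma10} to locate a crossing radius of the shooting solutions $\varphi(\cdot;\alpha)$ with $\varphi_L$, chooses $\alpha$ close to $\alpha^*=\varphi_L(0)$ by continuous dependence on initial data so that $\varphi(\cdot;\alpha)$ stays within $\varepsilon:=\min_{[0,r_0]}|\varphi_L-w_M|$ of $\varphi_L$, and then glues $\varphi(\cdot;\alpha)$ on $[0,r_1]$ with $\varphi_L$ on $[r_1,\infty)$ using Lemma \ref{lemma11}; the crossing structure needed for the gluing is itself obtained from the Sturm-type identity of Lemma \ref{lemma9}, entirely at the exponential level and with no $n$-approximation.

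Your version has two genuine gaps. First, the step ``$w_0^{(n)}<\psi^{(n)}_L$ on $\mathbb{R}^N$ for all $n$ large'' is not justified: the convergence $\psi^{(n)}_L-n\to\varphi_L$ supplied by Theorem \ref{theorem3} is only uniform on compact sets $[0,r_0]$, while the hypothesis $w_0<\varphi_L$ is a pointwise strict inequality whose gap may shrink to zero as $|x|\to\infty$; nothing in the paper yields a comparison valid on all of $\mathbb{R}^N$ for a single $n$. Second, and more seriously, the three qualitative conclusions of the proposition --- $w_0<\overline{w}_0$, $\overline{w}_0\le\varphi_L$, and $\overline{w}_0\not\equiv\varphi_L$ --- are exactly the statements that do not survive a locally uniform limit: strict inequalities degenerate to non-strict ones, the claimed gap $\overline{w}_0^{(n)}(0)<\psi^{(n)}_L(0)$ may close as $n\to\infty$, and your fallback (the strong maximum principle) does not apply because $w_0$ is merely a continuous function, not a sub- or supersolution. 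The paper's construction obtains strictness for free from the uniform gap $\varepsilon>0$ on the compact set $[0,r_0]$ and from the fact that $\overline{w}_0$ coincides with $\varphi_L$ outside $[0,r_1]$, where $w_0<\varphi_L$ holds by hypothesis; your limit object carries no such explicit structure. To repair the approximation route you would in any case need the exponential-level crossing and gluing lemmas to control the limit, at which point the paper's direct argument is both shorter and cleaner.
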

In order to prove Proposition \ref{prop8}, we prepare the following Lemma.
\begin{lemma}\label{lemma9}
Let $\alpha_1<\alpha_2$. Assume that $\varphi(r;\alpha_i) \ (i=1,2)$ is the solution to \rm{(\ref{e})} satisfying $\varphi'(0)=0$ with initial data $\varphi(0;\alpha_i)=\alpha_i \ (i=1,2)$. Suppose that there exists $r_0>0$ such that
$$
\varphi(r;\alpha_1)<\varphi(r;\alpha_2) \ (0\leq r<r_0), \quad \varphi(r_0;\alpha_1)=\varphi(r_0;\alpha_2).
$$
If $\alpha_3>\alpha_2$, then $\varphi(r;\alpha_3)-\varphi(r;\alpha_2)$ has at least one zero in $(0,r_0).$
\end{lemma}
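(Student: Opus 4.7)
The plan is to recast the differences $w_2(r) := \varphi(r;\alpha_2) - \varphi(r;\alpha_1)$ and $w_3(r) := \varphi(r;\alpha_3) - \varphi(r;\alpha_2)$ as solutions of linear second-order ODEs and run a Sturm--Picone-type comparison through a weighted Wronskian. Subtracting the two copies of \eqref{e} and writing $e^b - e^a = c(a,b)(b-a)$ with $c(a,b) := \int_0^1 e^{a + s(b-a)}\,ds > 0$ puts each $w_i$ in the form
\[
w_i'' + \Bigl(\frac{N-1}{r} + \frac{r}{2}\Bigr) w_i' + c_i(r)\, w_i = 0, \qquad i = 2, 3,
\]
where $c_{21}(r) := c(\varphi(r;\alpha_1), \varphi(r;\alpha_2))$ and $c_{32}(r) := c(\varphi(r;\alpha_2), \varphi(r;\alpha_3))$. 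Strict convexity of $\exp$, applied with $\varphi(\cdot;\alpha_1) < \varphi(\cdot;\alpha_2)$ on $[0,r_0)$ and the hypothesis (to be refuted) $\varphi(\cdot;\alpha_2) < \varphi(\cdot;\alpha_3)$ on $[0,r_0]$, then yields $c_{32}(r) > c_{21}(r)$ on $(0, r_0]$.

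I would argue by contradiction, assuming $w_3 > 0$ throughout $[0, r_0]$. Multiplying by $\rho_N(r) := r^{N-1} e^{r^2/4}$ recasts each equation in self-adjoint form $(\rho_N w_i')' + \rho_N c_i w_i = 0$, so the weighted Wronskian $W(r) := \rho_N(r)\bigl(w_3 w_2' - w_2 w_3'\bigr)$ satisfies
\[
W'(r) = \rho_N(r)\bigl(c_{32}(r) - c_{21}(r)\bigr) w_2(r)\, w_3(r) > 0 \quad \text{on } (0, r_0),
\]
so $W$ is strictly increasing on this interval.

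To close the contradiction I evaluate $W$ at the two endpoints. At the origin the relations $w_i'(0) = 0$ and $N w_i''(0) = -c_i(0)\, w_i(0)$ (obtained by passing to the limit $r \to 0^+$ in the ODE) produce the Taylor expansion
\[
W(r) = \frac{w_2(0)\, w_3(0)\,(c_{32}(0) - c_{21}(0))}{N}\, r^{N} + O(r^{N+2}),
\]
which is strictly positive for small $r > 0$; by monotonicity this forces $W(r_0) > 0$. At $r = r_0$, however, $w_2(r_0) = 0$ and uniqueness for the linear ODE (recall $w_2 \not\equiv 0$) forces $w_2'(r_0) < 0$, so
\[
W(r_0) = \rho_N(r_0)\, w_3(r_0)\, w_2'(r_0) \le 0,
\]
a contradiction irrespective of whether $w_3(r_0) > 0$ or $w_3(r_0) = 0$. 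Hence $w_3$ must vanish at some point of $(0, r_0)$.

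The main obstacle I anticipate is the bookkeeping at the singular endpoint $r = 0$: one has to justify the Taylor expansion of $w_i$ across the $(N-1)/r$ term and extract the precise leading coefficient of $W$ so as to secure $W > 0$ just to the right of the origin. Once that point is settled, the rest is a routine Sturm-type Wronskian argument and relies only on strict convexity of $\exp$ together with standard uniqueness for linear second-order ODEs.
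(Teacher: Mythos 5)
Your proposal is correct and follows essentially the same route as the paper's proof: the same two differences, the same linearization of $e^{\varphi}$ via a mean-value/integral coefficient, the same self-adjoint form with weight $\rho_N(r)=r^{N-1}e^{r^2/4}$, and the same Wronskian identity integrated between the origin and $r_0$ with the identical endpoint signs. The only (harmless) difference is that you extract the leading $r^N$ term of $W$ near the origin, whereas the paper simply uses $\rho_N(0)=0$ together with $\phi_j'(0)=0$ to kill the boundary term there.
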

\begin{proof}
This proof is carried out by the similar argument used in the proof of [\cite{Naito3} Lemma 5.1]. Assume to the contrary that $\varphi(r;\alpha_3)-\varphi(r;\alpha_2)>0, \ \mbox{for} \ 0\leq r<r_0.$ We set $\phi_1(r)=\varphi(r;\alpha_2)-\varphi(r;\alpha_1), \ \phi_2(r)=\varphi(r;\alpha_3)-\varphi(r;\alpha_2)$. Since $\varphi(r;\alpha_i) \ (i=1,2,3)$ is the solution to (\ref{e}) we have
\begin{equation}\label{eq92}
(\rho_N{\phi}'_j)'+\rho_Nm_j\phi_j=0 \quad \mbox{for} \ r>0, \ j=1,2,
\end{equation}
where $\rho_N(r)=r^{N-1}e^{r^2/4}$ and $m_j$ satisfies:
$$
e^{\varphi(r;\alpha_i)}<m_j(r)<e^{\varphi(r;\alpha_{j+1})} \quad 0\leq r\leq r_0, \ j=1,2.
$$
Then, we obtain $m_1(r)<m_2(r) \  \mbox{for} \ 0\leq r<r_0$ and
\begin{equation}\label{eq92'}
\phi_1'(r_0)\leq0, \ \phi_2(r_0)\geq0.
\end{equation}
By (\ref{eq92}) we have
\begin{eqnarray}\label{eq93}
(\rho_N{\phi}'_1)'\phi_2+\rho_Nm_1\phi_1\phi_2&=&0 \quad (r>0), \\
\label{eq94}
 (\rho_N{\phi}'_2)'\phi_1+\rho_Nm_2\phi_1\phi_2&=&0 \quad (r>0).
\end{eqnarray} 
Since (\ref{eq93}) and (\ref{eq94}),  we have
\begin{equation}\label{eq95}
(\rho_N(\phi'_1\phi_2-\phi_1\phi'_2))'=-\rho_N(m_1-m_2)\phi_1\phi_2.
\end{equation}
We integrate (\ref{eq95}) from $0$ to $r_0$, we obtain
$$
\rho_N(\phi'_1\phi_2-\phi_1\phi'_2)\mid^{r=r_0}_{r=0}=-\int_0^{r_0}\rho_N(m_1-m_2)\phi_1\phi_2>0.
$$
On the other hand, since (\ref{eq92'}) and $\phi_i(0)'=0$ we have
$$
\rho_N(\phi'_1\phi_2-\phi_1\phi'_2)\mid^{r=r_0}_{r=0}=\rho_N(r_0)\phi'_1(r_0)\phi_2(r_0)\leq0.
$$
This is contradiction. Therefore $\varphi(r;\alpha_3)-\varphi(r;\alpha_2)$ has at least one zero in $(0,r_0)$.
\end{proof}
\begin{lemma}\label{lemma10}
Assume that $S_L$ has at least two elements $\underline{\varphi}_L$ and  $\varphi_L$. Suppose that $\alpha_*=\underline{\varphi}_L(0), \alpha^*=\varphi_L(0)$ and $\alpha_0\in{(\alpha_*,\alpha^*)}$. Then there exists $r_0>0$ such that
\begin{equation}\label{eq96}
\underline{\varphi}_L(r)<\varphi(r;\alpha_0)<\varphi_L(r) \ \mbox{for} \ 0\leq r<r_0, \quad \varphi(r_0;\alpha_0)=\varphi_L(r_0).
\end{equation}
In addition, we have
\begin{enumerate}
\item If $\alpha\in{(\alpha_0,\alpha^*)}$ then there exists $r_1\in{(0,r_0)}$ such that
\begin{equation}\label{eq97}
\varphi(r;\alpha)<\varphi_L(r) \ (0\leq r<r_1), \quad \varphi(r_1;\alpha)=\varphi_L(r_1);
\end{equation}
\item If $\alpha>\alpha^*$ then there exists $r_2\in{(0,r_0)}$ such that
\begin{equation}\label{eq98}
\varphi(r;\alpha)>\varphi_L(r) \ (0\leq r<r_2), \quad \varphi(r_2;\alpha)=\varphi_L(r_2).
\end{equation}
\end{enumerate}
\end{lemma}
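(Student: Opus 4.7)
The plan is to establish the existence of $r_0$ first via a continuity-and-contradiction argument that combines Proposition \ref{prop7}(i) with Lemma \ref{lemma9}, and then to derive parts (i) and (ii) by invoking Lemma \ref{lemma9} with carefully chosen triples of initial data relative to the crossing produced in the first step.

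To obtain $r_0$, the uniqueness theorem for (\ref{e}) together with the strict initial ordering $\underline{\varphi}_L(0)<\varphi(0;\alpha_0)<\varphi_L(0)$ implies that $\underline{\varphi}_L<\varphi(\cdot;\alpha_0)<\varphi_L$ on a maximal interval $[0,r_0)$ with $r_0\in(0,\infty]$; it then suffices to exclude the two possibilities other than $\varphi(r_0;\alpha_0)=\varphi_L(r_0)$. If $r_0=\infty$, the sandwich between two elements of $S_L$ forces $\lim_{r\to\infty}(2\log r+\varphi(r;\alpha_0))=L$, so $\varphi(\cdot;\alpha_0)\in S_L$, and Proposition \ref{prop7}(i) forces $\varphi(\cdot;\alpha_0)\equiv\underline{\varphi}_L$ or $\equiv\varphi_L$, contradicting $\alpha_*<\alpha_0<\alpha^*$. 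If instead $\varphi(r_0;\alpha_0)=\underline{\varphi}_L(r_0)$, Lemma \ref{lemma9} applied to the triple $(\alpha_1,\alpha_2,\alpha_3)=(\alpha_*,\alpha_0,\alpha^*)$ produces a zero of $\varphi_L-\varphi(\cdot;\alpha_0)$ in $(0,r_0)$, contradicting $\varphi(\cdot;\alpha_0)<\varphi_L$ on $[0,r_0)$. Only $\varphi(r_0;\alpha_0)=\varphi_L(r_0)$ survives, yielding (\ref{eq96}).

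For (ii), with $\alpha>\alpha^*$, I apply Lemma \ref{lemma9} directly with $(\alpha_1,\alpha_2,\alpha_3)=(\alpha_0,\alpha^*,\alpha)$, using the crossing at $r_0$ from the first step; this yields a zero of $\varphi(\cdot;\alpha)-\varphi_L$ in $(0,r_0)$, and taking the first such zero $r_2$ together with $\varphi(0;\alpha)=\alpha>\alpha^*=\varphi_L(0)$ gives $\varphi(\cdot;\alpha)>\varphi_L$ on $[0,r_2)$ with equality at $r_2$. For (i), with $\alpha\in(\alpha_0,\alpha^*)$, I split on whether $\varphi(\cdot;\alpha)$ and $\varphi(\cdot;\alpha_0)$ cross before $r_0$: if they do not, then $\varphi(r_0;\alpha)>\varphi(r_0;\alpha_0)=\varphi_L(r_0)$ while $\varphi(0;\alpha)<\varphi_L(0)$, so the intermediate value theorem supplies $r_1\in(0,r_0)$ with $\varphi(r_1;\alpha)=\varphi_L(r_1)$; if they do cross at some $r'<r_0$, Lemma \ref{lemma9} with $(\alpha_1,\alpha_2,\alpha_3)=(\alpha_0,\alpha,\alpha^*)$ (valid since $\alpha_0<\alpha<\alpha^*$) produces a zero of $\varphi_L-\varphi(\cdot;\alpha)$ in $(0,r')\subset(0,r_0)$. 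In either case, selecting the first zero $r_1$ and using $\varphi(0;\alpha)<\varphi_L(0)$ delivers the required strict inequality on $[0,r_1)$.

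The principal obstacle is purely organizational: the role of $(\alpha_1,\alpha_2)$ in Lemma \ref{lemma9} changes between invocations, and each application must be set up so that the ordering hypotheses $\alpha_1<\alpha_2$, $\alpha_3>\alpha_2$, and the genuine first crossing of $\varphi(\cdot;\alpha_1)$ with $\varphi(\cdot;\alpha_2)$ are simultaneously verified. Once $r_0$ is secured in the first step, no analytic input beyond Lemma \ref{lemma9} and Proposition \ref{prop7}(i) is required.
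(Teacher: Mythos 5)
Your proposal is correct and follows essentially the same route as the paper: the same three-way case analysis for producing $r_0$ (ruling out the permanent sandwich via Proposition \ref{prop7}(i) and the touching of $\underline{\varphi}_L$ via Lemma \ref{lemma9} with the triple $(\alpha_*,\alpha_0,\alpha^*)$), and the same applications of Lemma \ref{lemma9} with $(\alpha_0,\alpha,\alpha^*)$ for part (i) and $(\alpha_0,\alpha^*,\alpha)$ for part (ii). The only difference is cosmetic: you organize part (i) as a case split on whether $\varphi(\cdot;\alpha)$ crosses $\varphi(\cdot;\alpha_0)$ before $r_0$, where the paper runs the equivalent argument by contradiction.
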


\begin{proof}
Since we see that $\underline{\varphi}_L(0)<\varphi(0;\alpha_0)<\varphi_L(0),$ one of the following condition (a)-(c) holds: \\
(a) $\underline{\varphi}_L(r)<\varphi(r;\alpha)<\varphi_L(r) \ r>0;$ \\
(b) There exists $r_0>0$ such that
$$
\underline{\varphi}_L(r)<\varphi(r;\alpha_0)<\varphi_L(r) \ 0\leq r<r_0 \quad \underline{\varphi}_L(r)=\varphi(r_0;\alpha_0);
$$
(c) There exists $r_0>0$ satisfying (\ref{eq96}). \\
The condition (a) does not hold by Proposition \ref{prop7} (i). Assume that condition (b) holds. By Lemma \ref{lemma9}, $\varphi_L(r)-\varphi(r;\alpha_0)$ has at least one zero in $(0,r_0)$. This is contradiction. Therefore the condition (c) holds, and we have (\ref{eq96}).
\begin{enumerate}
\item Let $\alpha\in{(\alpha_0,\alpha^*)}$. Assume that $\varphi(r;\alpha)<\varphi_L(r) \ \mbox{for} \ 0\leq r<r_1.$ By (\ref{eq96}), there exists $r_1\in{(0,r_0]}$ such that 
$$
\varphi(r;\alpha_0)<\varphi(r;\alpha) \ (0\leq r<r_1),\quad \varphi(r_1;\alpha_0)=\varphi(r_1;\alpha).
$$ 
By Lemma \ref{lemma9}, $\varphi_L(r)-\varphi(r;\alpha)$ has at least one zero in $(0,r_0)$.
This is contradiction. We have  (\ref{eq97}).
\item Let $\alpha_1=\alpha_0, \alpha_2=\alpha^*$ and $\alpha_3=\alpha$. By Lemma \ref{lemma9}, $\varphi(r;\alpha)-\varphi_L(r)$ has at least one zero in $(0,r_0)$. Therefore (\ref{eq98}) holds.
\end{enumerate}
\end{proof}
\begin{lemma}[\cite{Fujishima} Lemma 2.5]\label{lemma11}
\begin{enumerate}
\item Let $\varphi_1=\varphi_1(|y|)$ and $\varphi_2=\varphi_2(|y|)$ be radially symmetric subsolutions to \rm{(\ref{ee})}. Assume that there exists $R>0$ such that $\varphi_1(R)=\varphi_2(R)$ and $\varphi'_1(R)\leq\varphi'_2(R)$. Then, $\underline{\varphi}$ defined by 
$$
\underline{\phi}(r):=\begin{cases}
\varphi_1(r), & r\in{[0,R]}, \\
\varphi_2(r) & r\in{[R,\infty)}
\end{cases}
$$
is a continuous weak subsolution to (\ref{ee}).
\item Let $\varphi_1=\varphi_1(|y|)$ and $\varphi_2=\varphi_2(|y|)$ be radially symmetric  supersolutions to \rm{(\ref{ee})}. Assume that there exists $R>0$ such that $\varphi_1(R)=\varphi_2(R)$ and $\varphi'_1(R)\geq\varphi'_2(R)$. Then $\overline{\phi}$ defined by 
$$
\overline{\phi}(r):=\begin{cases}
\varphi_1(r), & r\in{[0,R]}, \\
\varphi_2(r) & r\in{[R,\infty)}
\end{cases}
$$
is a continuous  weak supersolution to (\ref{ee}).
\end{enumerate}
\end{lemma}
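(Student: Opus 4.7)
The plan is to verify the weak subsolution condition for $\underline{\phi}$ in (i) directly, by splitting the spatial integral across the interface $\{|y|=R\}$, integrating by parts on each half, and observing that the residual boundary jump on $\partial B_R$ has precisely the sign dictated by the derivative hypothesis; part (ii) then follows by reversing all inequalities.

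Concretely, I would fix an admissible nonnegative test function $\eta\in C^2(\mathbb{R}^N)$ of compact support and consider
\[
I:=\int_{\mathbb{R}^N}\underline{\phi}\Bigl(\Delta\eta-\tfrac{1}{2}y\cdot\nabla\eta-\tfrac{N}{2}\eta\Bigr)\,dy+\int_{\mathbb{R}^N}(e^{\underline{\phi}}+1)\eta\,dy,
\]
splitting $I=I_{1}+I_{2}$ along $|y|\le R$ and $|y|\ge R$, on which $\underline{\phi}$ coincides with $\varphi_1\in C^2(\overline{B_R})$ and $\varphi_2\in C^2(\mathbb{R}^N\setminus B_R)$, respectively. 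On the inner ball I would apply the divergence theorem twice to the Laplacian term and once to the drift term (using $\nabla\cdot(y\eta\varphi_1)=N\eta\varphi_1+y\cdot\nabla(\eta\varphi_1)$), producing the nonnegative interior integral $\int_{B_R}\eta[\Delta\varphi_1+\tfrac{1}{2}y\cdot\nabla\varphi_1+e^{\varphi_1}+1]\,dy\ge0$ together with a boundary contribution of the form $\int_{\partial B_R}\bigl[\varphi_1(R)\,\partial_r\eta-\eta\,\varphi_1'(R)-\tfrac{R}{2}\varphi_1(R)\,\eta\bigr]\,dS$. The analogous computation on the outer region yields a nonnegative interior piece and the same boundary expression with $\varphi_1$ replaced by $\varphi_2$, but with overall opposite sign, since the outward normal there is $-\partial_r$.

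Summing, the terms $\varphi_j(R)\,\partial_r\eta$ and $\tfrac{R}{2}\varphi_j(R)\,\eta$ cancel exactly thanks to $\varphi_1(R)=\varphi_2(R)$, leaving the single residue $\int_{\partial B_R}\eta\,(\varphi_2'(R)-\varphi_1'(R))\,dS$, which is $\ge 0$ under the hypothesis $\varphi_1'(R)\le\varphi_2'(R)$. Hence $I\ge0$ for every admissible $\eta$, so $\underline{\phi}$ is a continuous weak subsolution to (\ref{ee}); continuity across $\partial B_R$ is automatic from the matching value condition. Part (ii) is obtained by reversing every inequality, and the residue $\int_{\partial B_R}\eta\,(\varphi_2'(R)-\varphi_1'(R))\,dS$ becomes $\le 0$ under $\varphi_1'(R)\ge\varphi_2'(R)$. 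The only real technical hazard is careful sign bookkeeping through the two integrations by parts on the Laplacian and the one on the drift, particularly because $\eta$ need not vanish on $\partial B_R$; once the residue is identified correctly, the derivative hypothesis closes the argument automatically.
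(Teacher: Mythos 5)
Your computation is correct and is the standard proof of this gluing lemma: the paper itself gives no proof, simply citing it as Lemma 2.5 of \cite{Fujishima}, and your split-and-integrate-by-parts verification, with the exact cancellation of the $\varphi_j(R)\,\partial_r\eta$ and $\tfrac{R}{2}\varphi_j(R)\,\eta$ terms and the surviving boundary residue $\int_{\partial B_R}\eta\,(\varphi_2'(R)-\varphi_1'(R))\,dS$ of the right sign, is precisely the expected argument. The only caveat worth recording is that your two integrations by parts tacitly require $\varphi_1,\varphi_2$ to be classical ($C^2$ up to $\partial B_R$) sub/supersolutions on their respective pieces so that $\varphi_j'(R)$ and the interior integrands make pointwise sense; this is the situation in which the lemma is actually used here, since the pieces are genuine solutions of (\ref{e}).
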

\begin{proof}[Proof of Proposition \ref{prop8}]
Let $\alpha_*=\varphi_L(0), \alpha^*=\varphi_L(0)$ and $\alpha_0\in{(\alpha_*,\alpha^*)}$. By Lemma \ref{lemma10}, there exists $r_0>0$ satisfying (\ref{eq96}). 
\begin{enumerate}
\item Put $w_M(r)=\max_{|x|=r}w_0(x) \ \mbox{for} \ r>0$. Then we have $\varphi_L(r)>w_M(r) \ \mbox{for} \ r>0$. Setting $\varepsilon=\min_{0\leq r\leq r_0}|\varphi_L(r)-w_M(r)|$. By continuous dependence of initial data, there exists $\delta>0$ such that if $|\alpha-\alpha^*|<\delta$ then 
\begin{equation}\label{eq99}
|\varphi_L(r)-\varphi(r;\alpha)|<\varepsilon \quad  \mbox{for} \ 0\leq r\leq r_0.
\end{equation}
Let $\alpha\in{(\alpha^*-\delta,\alpha^*)\cap(\alpha_0,\alpha^*)}$. By Lemma \ref{lemma10} (i), there exists $r_0\in{(0,r_1)}$ such that (\ref{eq98}). Then we have 
$$
w_M(r)\leq \varphi_L(r)-\varepsilon<\varphi(r;\alpha)<\varphi_L(r) \quad \mbox{for} \ 0\leq r\leq r_1
$$
and $\varphi(r_1;\alpha)=\varphi_L(r_1)$. Therefore we obtain $\varphi'(r_1;\alpha)\geq v'_L(r_1)$.
Putting
$$
\overline{w}_0(r)=\begin{cases}
\varphi(r;\alpha), & 0\leq r<r_1, \\
\varphi_L(r), & r\geq r_1.
\end{cases}
$$
Then $\overline{w}_0$ satisfies (\ref{eq91}) and we have $\overline{w}_0$ is a continuous weak supersolution to (\ref{e}) by Lemma \ref{lemma11} (ii).
\item Put $w_m(r)=\min_{|x|=r}w_0(x) \ \mbox{for} \ r>0$. Then we have $\varphi_L(r)<w_m(r) \ \mbox{for} \ r>0$. Setting $\varepsilon=\min_{0\leq r\leq r_0}|\varphi_L(r)-w_m(r)|$. By the continuous dependence of initial data, there exists $\delta>0$ such that if $|\alpha-\alpha^*|<\delta$ then 
\begin{equation}\label{eq99}
|\varphi_L(r)-\varphi(r;\alpha)|<\varepsilon \quad \mbox{for} \ 0\leq r\leq r_0.
\end{equation}
Put $\alpha\in{(\alpha^*,\alpha^*+\delta)}$. By Lemma \ref{lemma10} (ii), there exists $r_2\in{(0,r_0)}$ satisfying (\ref{eq98}). Then we have 
$$
\varphi_L(r)<\varphi(r;\alpha)<\varphi_L(r)+\varepsilon\leq w_m(r) \quad \mbox{for} \ 0\leq r<r_2
$$
and $\varphi(r_2;\alpha)=\varphi_L(r_2)$. Therefore we have $\varphi'(r_2;\alpha)\leq \varphi'_L(r_2)$.
Put 
$$
\underline{w}_0(r)=\begin{cases}
\varphi(r;\alpha), & 0\leq r<r_2, \\ 
\varphi_L(r), & r\geq r_2.
\end{cases}
$$
Then $\underline{w}_0$ satisfies (\ref{eq91'}) and  $\underline{w}'_0(r)\leq0 \ \mbox{for} \ r\geq0$. We obtain that $\underline{w}_0$ is a continuous weak subsolution to (\ref{e}).
\end{enumerate}
\end{proof}
In order to prove Theorem \ref{main theorem}, we use the self-similar variables. 
Let $u$ be the solution to (\ref{Cauchy problem}). Then we define $w$ by the following:
\begin{equation}\label{eq100}
w(y,s):=\log{(1+t)}+u(x,t), \quad y=\frac{x}{\sqrt{1+t}}, \quad s=\log{(1+t)}.
\end{equation}
Then, $w$ satisfy
\begin{equation}\label{eq101}
\begin{cases}\displaystyle
w_s=\Delta w+\frac{1}{2}y\cdot\nabla w+e^{w}+1 & \mbox{in}  \ \mathbb{R}^N\times(0,\infty), \\
w(y,0)=w_0(y) & \mbox{on} \ \mathbb{R}^N
\end{cases}
\end{equation}
where $w_0=u_0$. \\

We say that $w$ is a continuous weak supersolution to (\ref{eq101}) in $0\leq s\leq S$ if $w$ is a continuous on $\mathbb{R}^N\times[0,S]$, $w(y,0)\geq w_0(y) \ y\in{\mathbb{R}^N}$ and satisfies
\begin{equation}\label{weak sol}
\int_{\mathbb{R}^N}w(y,s)\xi(y,s) \, dy \biggl|_{s=0}^{s=\sigma}\geq\int_0^{\sigma}\!\!\!\int_{\mathbb{R}^N}[w(y,s)(\xi_s+\Delta\xi)(y,s)+e^{w(y,s)}\xi(y,s)] \, dy \, ds
\end{equation}
for all $\xi\in{C^{2,1}(\mathbb{R}^N\times[0,S])}$ with  $\xi\geq0$ such that $\mbox{supp} \, \xi(\cdot,s)$ is compact in $\mathbb{R}^N$ for all $s\in{[0,\sigma]}$. A continuous weak subsolution is defined in the same way by reversing the inequalities. \\

Next we introduce comparison results of sub- and supersolutions..
\begin{lemma}[\cite{Fujishima} Lemma 2.3 (ii)]\label{lemma14}
 Let $\overline{w}$ and $\underline{w}$ be weak supersolution and subsolution to \rm{(\ref{eq101}}) in $\mathbb{R}^N\times[0,S]$, respectively. Assume that $\overline{w}$ and $\underline{w}$ are bounded above and satisfy $\overline{w}(x,s)-\underline{w}(x,s)\geq-Ae^{B|x|^2} \ \mbox{in} \ \mathbb{R}^N\times[0,S]$ for some constants $A,B>0.$ Then $\underline{w}\leq\overline{w} \ \mbox{in} \ \mathbb{R}^N\times[0,S]$ and there exists the solution $w$ to \rm{(\ref{eq101})} such that $\underline{w}\leq w\leq\overline{w} \ \mbox{in} \ \mathbb{R}^N\times[0,S]$.
\end{lemma}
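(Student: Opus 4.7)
The plan is to reduce this comparison/existence statement in the self-similar variables back to the previously stated comparison principle for the original Cauchy problem (\ref{Cauchy problem}) by inverting the substitution (\ref{eq100}).

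First, set $T := e^S - 1$ and define
\begin{equation*}
\overline{u}(x,t) := \overline{w}\!\left(\tfrac{x}{\sqrt{1+t}},\log(1+t)\right) - \log(1+t), \quad \underline{u}(x,t) := \underline{w}\!\left(\tfrac{x}{\sqrt{1+t}},\log(1+t)\right) - \log(1+t),
\end{equation*}
on $\mathbb{R}^N\times[0,T]$. The pointwise identity
\begin{equation*}
u_t-\Delta u - e^u \;=\; \tfrac{1}{1+t}\left[\,w_s - \Delta w - \tfrac{1}{2}y\cdot\nabla w - e^w - 1\,\right]
\end{equation*}
shows that solutions correspond, and more generally that $\overline{u}$ and $\underline{u}$ are formally super- and subsolutions of (\ref{Cauchy problem}). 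The weak version is obtained by taking a test function of the form $\xi(x,t) = (1+t)^{-N/2}\eta\!\left(x/\sqrt{1+t},\log(1+t)\right)$ in the definition of continuous weak super/subsolution for (\ref{Cauchy problem}) and performing the change of variables $y = x/\sqrt{1+t}$, $s = \log(1+t)$ with Jacobian $dx\,dt = (1+t)^{N/2+1}\,dy\,ds$; the extra drift and constant in (\ref{eq101}) arise from the time derivative of the Jacobian factor and of the shift $-\log(1+t)$, and so transfer the inequality (\ref{weak sol}) for $\overline{w},\underline{w}$ to the weak inequality required of $\overline{u},\underline{u}$.

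Second, transfer the structural hypotheses. Since $|y|=|x|/\sqrt{1+t}\leq|x|$, the assumption $\overline{w}-\underline{w}\geq -A e^{B|y|^2}$ becomes $\overline{u}-\underline{u}\geq -A e^{B|x|^2}$; and since $-\log(1+t)$ is bounded on $[0,T]$, boundedness from above of $\overline{w},\underline{w}$ passes to boundedness from above of $\overline{u},\underline{u}$. Then invoke the already-stated comparison principle for (\ref{Cauchy problem}) (the lemma immediately preceding): it yields $\underline{u}\leq\overline{u}$ on $\mathbb{R}^N\times[0,T]$ together with a classical solution $u$ satisfying $\underline{u}\leq u\leq\overline{u}$. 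Reading (\ref{eq100}) backwards, i.e.\ setting $w(y,s):=s + u(e^{s/2}y,e^s-1)$, produces a solution to (\ref{eq101}) on $\mathbb{R}^N\times[0,S]$ with $\underline{w}\leq w\leq\overline{w}$.

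The main obstacle is the rigorous bookkeeping in the first step: showing that the weak formulation (\ref{weak sol}) in self-similar variables is genuinely equivalent to the weak formulation for (\ref{Cauchy problem}) under the prescribed change of variables. This amounts to verifying that integration by parts, the Jacobian factor $(1+t)^{-N/2}$, and the additive shift $-\log(1+t)$ together produce exactly the operator $\partial_s - \Delta - \tfrac{1}{2}y\cdot\nabla$ acting on $w$ and the extra constant $+1$; once this correspondence of test-function pairings is established, everything else — the growth bound, the upper bound, and the application of the preceding lemma — is routine.
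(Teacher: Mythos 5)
Your proposal is correct in substance, but note that the paper itself offers no proof of this statement at all: Lemma \ref{lemma14} is imported verbatim as \cite{Fujishima} Lemma 2.3(ii), just as the comparison principle for (\ref{Cauchy problem}) earlier in the paper is imported as Lemma 2.3(i) of the same reference. So there is no in-paper argument to compare against; what you have done is supply the missing link, namely a derivation of part (ii) from part (i) via the self-similar change of variables (\ref{eq100}). That reduction is the natural one and your bookkeeping is right: the prefactor $\tfrac{1}{1+t}>0$ preserves the direction of the differential inequality, $|y|=|x|/\sqrt{1+t}\le|x|$ transfers the lower bound $-Ae^{B|\cdot|^2}$, and $-\log(1+t)\le 0$ transfers boundedness from above; the test-function correspondence $\xi(x,t)=(1+t)^{-N/2}\eta(x/\sqrt{1+t},\log(1+t))$ is a bijection on the admissible classes, and the term $-s\,\eta$ coming from the additive shift integrates away because $\int(\Delta\eta-\tfrac12\nabla\cdot(y\eta))\,dy=0$, leaving exactly the drift $-\tfrac12 y\cdot\nabla$, the zeroth-order term $-\tfrac{N}{2}$, and the constant $+1$.

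One caveat worth flagging: the weak formulation (\ref{weak sol}) that the paper writes down for (\ref{eq101}) is literally identical to the one for (\ref{Cauchy problem}) and omits the adjoint drift terms $-\tfrac{N}{2}\eta-\tfrac12 y\cdot\nabla\eta$ and the extra $+\eta$; this is evidently a copy-paste slip in the paper, and your change of variables produces the \emph{corrected} weak formulation, not the printed one. You should state explicitly that your equivalence is with the corrected pairing
\begin{equation*}
\int w\eta\,dy\Big|_{s=0}^{s=\sigma}\;\geq\;\int_0^{\sigma}\!\!\int\Bigl[w\Bigl(\eta_s+\Delta\eta-\tfrac12 y\cdot\nabla\eta-\tfrac{N}{2}\eta\Bigr)+\bigl(e^{w}+1\bigr)\eta\Bigr]\,dy\,ds,
\end{equation*}
otherwise the claimed equivalence with the definition as printed would fail. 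With that understood, the remaining steps (growth bound, upper bound, invocation of Lemma 2.3(i), and transforming the resulting classical solution back) are all routine and correct.
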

We will prove the following proposition.
\begin{proposition}\label{prop9}
Let  $3\leq N\leq9$. Assume that there exist at least two elements $\underline{\varphi}_L$ and $\varphi_L$ of $S_L,$ where $\underline{\varphi}_L$ is a minimal solution of $S_L.$ Suppose that $w_0$ holds the assumption \rm{(\ref{assumption})}.
\begin{enumerate}
\item If $w_0(y)>\varphi_L(|y|) \ \mbox{for} \ y\in{\mathbb{R}^N}$ then the solution $w$ to \rm{(\ref{eq101})} with initial data $w_0$ blows up in finite time.
\item If $w_0(y)<\varphi_L(|y|) \ \mbox{for} \ y\in{\mathbb{R}^N}$
then the solution $w$ to \rm{(\ref{eq101})} with initial data $w_0$  exists globally in time. 
\end{enumerate}
\end{proposition}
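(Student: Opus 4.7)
The plan is to use Proposition~\ref{prop8} to construct radial stationary (sub/super)solutions of \eqref{ee} that sandwich $w_0$ against $\varphi_L$, and then to combine the comparison principle Lemma~\ref{lemma14} with the rigidity result Proposition~\ref{prop7} (ii) to force either uniform boundedness of $w$ (giving global existence of $u$) or finite-time blow-up.

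Part (ii) is essentially a one-shot application of comparison. Apply Proposition~\ref{prop8} (i) to produce a radial continuous weak supersolution $\overline{w}_0=\overline{w}_0(|y|)$ of \eqref{ee} satisfying $w_0(y)<\overline{w}_0(|y|)\leq\varphi_L(|y|)$. Viewed as constant in $s$, $\overline{w}_0$ is a weak supersolution of \eqref{eq101}, so Lemma~\ref{lemma14} gives $w(y,s)\leq\overline{w}_0(|y|)\leq\varphi_L(0)$ for every $(y,s)$. Reversing the change of variables \eqref{eq100}, one obtains $u(x,t)\leq\varphi_L(0)-\log(1+t)$, uniformly bounded above on $\mathbb{R}^N\times[0,\infty)$; hence $u$ cannot blow up in finite time and exists globally.

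For part (i), apply Proposition~\ref{prop8} (ii) to obtain a radial nonincreasing continuous weak subsolution $\underline{w}_0=\underline{w}_0(|y|)$ of \eqref{ee} with $\varphi_L(|y|)\leq\underline{w}_0(|y|)<w_0(y)$ and $\underline{w}_0\not\equiv\varphi_L$. Let $\tilde{w}$ be the solution of \eqref{eq101} with initial datum $\underline{w}_0$. Because $\underline{w}_0$ is a stationary subsolution, comparing $\tilde{w}(\cdot,s+h)$ with $\tilde{w}(\cdot,s)$ through Lemma~\ref{lemma14} shows that $\tilde{w}$ is monotone nondecreasing in $s$; moreover $\tilde{w}(y,s)\leq w(y,s)$. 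Argue by contradiction, assuming $u$ (equivalently $w$) exists globally. Then $\tilde{w}$ is globally defined and the pointwise monotone limit $\tilde{w}(y,s)\nearrow\tilde{\varphi}(y)$ as $s\to\infty$ exists. Interior parabolic regularity applied to \eqref{eq101}, combined with the two-sided control $\underline{w}_0\leq\tilde{w}\leq w$, upgrades this to convergence in $C^2_{\mathrm{loc}}$, so $\tilde{\varphi}$ is a classical radial $C^2$ solution of \eqref{ee} with $\tilde{\varphi}'(0)=0$ and $\tilde{\varphi}\geq\underline{w}_0\geq\varphi_L$. Proposition~\ref{prop7} (ii) then forces $\tilde{\varphi}\equiv\varphi_L$; but $\underline{w}_0\not\equiv\varphi_L$ yields a point $y_0$ with $\tilde{\varphi}(y_0)\geq\underline{w}_0(|y_0|)>\varphi_L(|y_0|)=\tilde{\varphi}(y_0)$, a contradiction. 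Therefore $w$ must blow up in finite $s$, which by \eqref{eq100} means $u$ blows up in finite time.

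The main obstacle is the regularity step in part (i): establishing that the pointwise monotone limit $\tilde{\varphi}$ is finite and a classical stationary solution. In particular one must rule out the \emph{grow-up} scenario, in which $\sup_y\tilde{w}(y,s)\to\infty$ as $s\to\infty$ while $\tilde{w}$ itself stays defined for every finite $s$. The standard route exploits the damping term $\tfrac{1}{2}y\cdot\nabla w$ in \eqref{eq101} to produce a weighted Lyapunov functional whose monotonicity along the flow, combined with interior $C^{2,\alpha}$ Schauder estimates and Arzel\`a--Ascoli, delivers the desired $C^2_{\mathrm{loc}}$ convergence of $\tilde{w}(\cdot,s)$ to the stationary limit $\tilde{\varphi}$, closing the contradiction.
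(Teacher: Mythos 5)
Your part (ii) is correct and, up to a small detour, matches the paper: the paper simply compares $w$ directly with the stationary solution $\varphi_L$ and concludes $w(y,s)\leq\varphi_L(|y|)$, whereas you first pass through Proposition \ref{prop8} (i); both yield global existence.

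The problem is in part (i), exactly at the point you yourself flag as ``the main obstacle,'' and the route you sketch does not close it. After taking the monotone limit $\tilde{\varphi}(r)=\lim_{s\to\infty}\tilde{w}(r,s)$, you aim to show that $\tilde{\varphi}$ is a classical solution on all of $[0,\infty)$ with $\tilde{\varphi}'(0)=0$, so that only Proposition \ref{prop7} (ii) is needed. But the two-sided control $\underline{w}_0\le\tilde{w}\le w$ provides no upper bound uniform in $s$ (global existence of $w$ does not make $w$ bounded), and no Lyapunov-functional or Schauder argument can exclude the scenario in which the limit is finite for every $r>0$ yet tends to $+\infty$ as $r\to0$, i.e.\ in which $\tilde{\varphi}$ is a \emph{singular} stationary solution. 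This is a genuine alternative, not a technicality, and the paper does not try to rule it out by regularity: instead it invokes Lemma \ref{lemma13} (quoted from Fujishima), which says the monotone limit is either a $C^2[0,\infty)$ solution of (\ref{e}) with $\tilde{\varphi}'(0)=0$, or a $C^2((0,\infty))$ solution with $\tilde{\varphi}(r)\to\infty$ as $r\to0$. In the first case Proposition \ref{prop7} (ii) gives the contradiction, as in your argument; in the second case one has a singular solution lying strictly above $\varphi_L$, which contradicts Proposition \ref{prop7} (iv). Your proposal never invokes part (iv) of Proposition \ref{prop7}, yet that is precisely the structural ingredient needed to dispose of the grow-up/singular-limit case; without it the contradiction argument is incomplete. (A minor further omission: to apply Lemma \ref{lemma13} one also needs $\tilde{w}(\cdot,s)$ to be radially nonincreasing in $r$ for each $s$, which is why the paper insists that the subsolution $\underline{w}_0$ from Proposition \ref{prop8} (ii) be nonincreasing in $r$ and propagates this monotonicity to $\tilde{w}$.)
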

To prove Proposition \ref{prop9} we prepare the following Lemmas.

\begin{lemma}[\cite{Fujishima} Lemma 2.4.]\label{lemma12}
\begin{enumerate}
\item Let $w_0$ be continuous weak subsolution to \rm{(\ref{ee})}. Assume that the solution $w$ to \rm{(\ref{eq101})} with initial data $w_0$ exists globally in time. Then $w$ is nondecreasing in $s$.
\item Let $w_0$ be continuous weak supersolution to \rm{(\ref{ee})}. Assume that the solution $w$ to \rm{(\ref{eq101})} with initial data $w_0$ exists globally in time. Then $w$ is nonincreasing in $s$.
\end{enumerate}
\end{lemma}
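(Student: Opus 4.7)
The plan is to deduce the time-monotonicity from the comparison principle (Lemma \ref{lemma14}) via a standard time-shift argument. I treat part (i); part (ii) is completely symmetric with all inequalities reversed.

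First I show that $w(y, s) \geq w_0(y)$ for every $(y, s) \in \mathbb{R}^N \times [0, \infty)$. The key observation is that the time-independent function $(y, s) \mapsto w_0(y)$ is itself a weak subsolution of the evolutionary equation (\ref{eq101}): trivially $(w_0)_s \equiv 0$, so the weak inequality for (\ref{eq101}) reduces to the weak inequality for the elliptic equation (\ref{ee}), which holds by hypothesis. A short test-function computation (integration by parts in $y$ only, since the test functions $\xi(\cdot, s)$ are compactly supported in space) makes this rigorous. Since the classical global solution $w$ is also a weak solution of (\ref{eq101}) with the same initial datum $w_0$, Lemma \ref{lemma14} applied to the pair $(w_0, w)$ yields $w_0(y) \leq w(y, s)$ on $\mathbb{R}^N \times [0, \infty)$.

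Second, I exploit the time-translation invariance of (\ref{eq101}). Fix $h > 0$ and define $W(y, s) := w(y, s + h)$. Then $W$ is a classical global solution of (\ref{eq101}) whose initial datum is $W(\cdot, 0) = w(\cdot, h)$. By the previous step, $W(\cdot, 0) = w(\cdot, h) \geq w_0(\cdot) = w(\cdot, 0)$, and both $w$ and $W$ satisfy the same equation (\ref{eq101}). A second application of Lemma \ref{lemma14}, now to the pair $(w, W)$, gives $w(y, s) \leq W(y, s) = w(y, s + h)$. Since $h > 0$ was arbitrary, $w$ is nondecreasing in $s$.

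The principal obstacle is verifying the hypotheses of Lemma \ref{lemma14} at each step: both functions must be bounded above, and their difference must satisfy the Gaussian-type lower bound $-Ae^{B|y|^2}$. The upper bound follows from the assumption $u_0 \leq C$ in (\ref{assumption}) and the self-similar change of variables (\ref{eq100}), combined with a standard parabolic maximum-principle argument comparing $w$ against a spatially constant supersolution of (\ref{eq101}). The Gaussian-type lower bound on the difference is a consequence of the sub-Gaussian lower bound $u_0(x) \geq -Ce^{|x|^{2-\varepsilon}}$ with $\varepsilon \in (0,2)$ in (\ref{assumption}), which is preserved under the transformation (\ref{eq100}) and propagated forward by (\ref{eq101}). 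A secondary (but routine) point is justifying that the adjoint-form weak formulation of (\ref{ee}) really matches the evolutionary weak formulation at the stationary level, which again is a test-function/integration-by-parts computation.
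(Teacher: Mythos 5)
The paper does not prove this lemma at all: it is quoted verbatim from Fujishima's paper (Lemma 2.4 there), so there is no in-paper proof to compare against. Your argument --- extend $w_0$ to a time-independent weak subsolution of (\ref{eq101}), apply the comparison principle of Lemma \ref{lemma14} to obtain $w(y,s)\geq w_0(y)$, then compare $w$ with its time-translate $w(\cdot,\cdot+h)$ --- is the standard proof of such monotonicity statements and is sound; it is essentially the argument used in the cited reference. Two minor points worth noting: Lemma \ref{lemma14} is stated on finite time strips $\mathbb{R}^N\times[0,S]$, so both applications should be performed on each $[0,S]$ (where the upper bound and the Gaussian lower bound on the difference can actually be verified from (\ref{assumption})) and then $S$ sent to infinity; and when you check that the stationary extension of $w_0$ satisfies the parabolic weak inequality, you must reconcile the drift term $\frac{1}{2}y\cdot\nabla$ --- which appears in the weak form of (\ref{ee}) through $-\frac{1}{2}y\cdot\nabla\eta-\frac{N}{2}\eta$ --- with the parabolic weak formulation, which as printed in the paper omits that term; this is a typo in the paper rather than a gap in your argument.
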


\begin{lemma}[\cite{Fujishima} Lemma 2.7]\label{lemma13}
Let the solution $w=w(|y|,s)$ to \rm{(\ref{eq101})} be a global solution and radially symmetric  in $y$ . Assume that  $w(|y|,s)$ is nondecreasing function in $s$ for each fixed $r\geq0$ and nonincreasing function in $r=|y|$ for each fixed $s\geq0$. Put  $\varphi(r):=\lim_{s\to\infty}w(r,s)$.
\begin{enumerate}
\item If $\varphi$ is bounded above, then $\varphi\in{C^2([0,\infty))}$ is the solution to (\ref{e}) satisfying $\varphi'(0)=0$.
\item If $\varphi$ is not  bounded above. Then  $\varphi\in{C^2((0,\infty))}$ is the solution to (\ref{e}) satisfying $\lim_{r\to0}\varphi(r)=\infty$.
\end{enumerate}
\end{lemma}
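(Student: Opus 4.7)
The plan is to exploit the monotonicity of $w$ in $s$ together with parabolic interior regularity to pass to the limit as $s\to\infty$, producing an elliptic solution that is either globally finite (case (i)) or singular only at the origin (case (ii)). Since $w(r,s)$ is nondecreasing in $s$ at every fixed $r$, the pointwise limit $\varphi(r)=\lim_{s\to\infty}w(r,s)\in(-\infty,+\infty]$ exists, and as a monotone limit of $r$-nonincreasing functions, $\varphi$ is itself nonincreasing. Hence its infinity set is downward-closed: $\{\varphi=+\infty\}$ equals either $(0,r^{*})$ or $(0,r^{*}]$ for some $r^{*}\in[0,\infty]$.

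For the core regularity step, fix any interval $[r_{1},r_{2}]\subset(r^{*},\infty)$; on it $\sup\varphi<\infty$, so monotone convergence yields a uniform bound $\|w\|_{L^{\infty}([r_{1},r_{2}]\times[0,\infty))}\leq M$. Treating $e^{w}+1$ as a uniformly bounded forcing term in the linear parabolic equation and applying interior Schauder estimates on slightly smaller cylinders gives uniform $C^{2,1}$ bounds on $w$. Translating in time, $w^{(n)}(r,s):=w(r,s+n)$ has a subsequence converging in $C^{2,1}_{\mathrm{loc}}$ to a limit that must equal $\varphi(r)$, the pointwise limit being unique and $s$-independent. Simultaneously $w_{s}\to 0$ locally uniformly: $\int_{0}^{\infty}w_{s}(r,\cdot)\,ds=\varphi(r)-w_{0}(r)<\infty$, combined with the equicontinuity just established, forces this decay. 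Passing $s\to\infty$ in (\ref{eq101}) shows that $\varphi$ solves (\ref{e}) classically on $(r^{*},\infty)$.

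In case (i), $\varphi$ is bounded, so the regularity argument applies on every compact subset of $[0,\infty)$. To reach the origin, I pass to the Cartesian PDE (\ref{ee}) for the radial $C^{2}$ function $w(\cdot,s)$ on $\mathbb R^{N}$ and use interior regularity at $y=0$ to conclude $\varphi\in C^{2}(\mathbb R^{N})$; then $\varphi'(0)=0$ because a radial $C^{1}$ function on $\mathbb R^{N}$ must have vanishing radial derivative at the center. In case (ii), I must show $r^{*}=0$. Suppose instead $r^{*}>0$ with $\varphi(r)\to+\infty$ as $r\to r^{*+}$. Inserting the ansatz $\varphi(r)=-a\log(r-r^{*})+h(r)$ (with $a>0$ and $h$ bounded) into (\ref{e}), the only possible leading-order balance occurs at $a=2$, which demands that the coefficient of $(r-r^{*})^{-2}$ vanish; but this coefficient equals $2+e^{h(r^{*})}>0$, a contradiction. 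The drift term $(\tfrac{N-1}{r^{*}}+\tfrac{r^{*}}{2})\varphi'$ contributes only at order $(r-r^{*})^{-1}$ and cannot repair the mismatch. By contrast, at $r=0$ the singular coefficient $\tfrac{N-1}{r}$ contributes an extra $-2(N-1)/r^{2}$ at leading order, producing the consistent condition $e^{h(0)}=2N-4>0$; this is exactly why the singularity of $\varphi$ can be placed only at the origin. Thus $r^{*}=0$, and $\varphi\in C^{2}((0,\infty))$ solves (\ref{e}) with $\lim_{r\to 0}\varphi(r)=+\infty$.

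The substantive difficulty lies in case (ii): the parabolic regularity machinery only controls $\varphi$ where it is already known to be locally bounded, so ruling out interior blow-up must come from an ODE asymptotic analysis of (\ref{e}). The algebraic asymmetry between $r=0$ and $r^{*}>0$ in the singular coefficient of $\varphi'$ is the essential mechanism that forces the singularity to the center.
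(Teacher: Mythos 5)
The paper does not actually prove this lemma; it is quoted verbatim from \cite{Fujishima} (Lemma 2.7), so there is no internal proof to compare against and your argument has to stand on its own. Your overall architecture --- monotone pointwise limit, uniform parabolic estimates on sets where $\varphi$ is locally bounded, time-translation compactness with $w_s\to 0$, passage to the elliptic limit, and interior regularity at the origin in case (i) --- is the standard and correct route, and case (i) is essentially complete.

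The genuine gap is in case (ii), in the step that forces $r^{*}=0$. First, your exclusion of an interior singularity rests on the ansatz $\varphi(r)=-a\log(r-r^{*})+h(r)$ with $h$ bounded; nothing in the argument shows that a solution of (\ref{e}) blowing up at $r^{*}>0$ must have this logarithmic form, so the ``only possible leading-order balance'' claim is formal rather than proved. The asymmetry you point to is real, but the rigorous version of it comes from the integral identity $(\rho_N\varphi')'=-\rho_N(e^{\varphi}+1)\le 0$ with $\rho_N(r)=r^{N-1}e^{r^2/4}$: since $\rho_N\varphi'$ is nonincreasing and $\rho_N(r)\ge\rho_N(r^{*})>0$ near any $r^{*}>0$, one gets $\varphi'\ge -C$ on $(r^{*},r_0)$, hence $\varphi$ stays bounded as $r\downarrow r^{*}$ --- no ansatz needed, and the argument visibly fails only at $r^{*}=0$ where $\rho_N$ vanishes. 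Second, and more seriously, that still only rules out $\varphi(r)\to+\infty$ \emph{continuously} as $r\downarrow r^{*}$; since $\varphi$ is merely a monotone pointwise limit, you must also exclude the scenario in which $\varphi$ is bounded on $(r^{*},\infty)$ but identically $+\infty$ on $(0,r^{*})$. The ODE for $\varphi$ on $(r^{*},\infty)$ says nothing about this, so a parabolic input is required: for instance, test (\ref{eq101}) against a fixed compactly supported $\xi\ge 0$, use the monotonicity of $w$ in $s$ and monotone convergence to conclude $e^{\varphi}\in L^{1}_{\mathrm{loc}}(\mathbb{R}^N)$, hence $\varphi<\infty$ almost everywhere, which together with radial monotonicity forces the infinity set to be at most $\{0\}$. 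Without some argument of this kind, case (ii) is not established.
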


\begin{proof}[Proof of \rm{ Proposition \rm{\ref{prop9}}}]
(i) The proof is carried out  contradiction argument. Assume to contrary that $w$ exists globally in time. By Proposition \ref{prop8} (ii) there exists a continuous weak subsolution $\underline{w}_0$ such that $\underline{w}_0=\underline{w}_0(r), \ r=|x|,$ nonincreasing in $r$, $\underline{w}_0\neq \varphi_L$ and (\ref{eq91'}). Let  $\underline{w}$ be the solution to (\ref{eq101}) with initial data $w_0=\underline{w}_0$. From Lemma \ref{lemma14},  $\underline{w}=\underline{w}(r,s), \ r=|y|$ is a radially symmetric  and nonincreasing function in $r\geq0$. We remark that  $w_0=u_0$ satisfies assumption (\ref{assumption}). By the comparison principle, we have $\varphi_L<\underline{w}< w$ and  $\underline{w}$ exists globally in time. From Lemma \ref{lemma12}, $\underline{w}(r,s)$ is  nonincreasing in $s.$ Let $\underline{\varphi}(r)=\lim_{s\to\infty}\underline{w}(r,s) \ \mbox{for} \ r\geq0$. Since $\underline{w}(r,s)$ is nonincreasing in $r,$ $\underline{\varphi}(r)$ is a nonincreasing function and satisfies
\begin{equation}\label{eq104}
\varphi_L(r)<\underline{w}_0(r)\leq \underline{w}(r,s)\leq\underline{\varphi}(r), \quad r>0, \ s\geq0.
\end{equation}
Assume that $\underline{\varphi}$ is bounded above. By Lemma \ref{lemma13} (i), we get $\underline{\varphi}\in{C^2[0,\infty)}$ to (\ref{e}) satisfying $\underline{\varphi}'(0)=0$. From (\ref{eq104}), we have  $\varphi_L(r)<\underline{\varphi}(r) \ \mbox{for} \ r\geq0.$ This is a contradiction by Proposition \ref{prop7} (ii). Therefore we conclude that $\underline{\varphi}\not\in{L^{\infty}[0,\infty)}$. By Lemma \ref{lemma13} (ii), we have $\underline{\varphi}\in{C^2(0,\infty)}$ satisfying $\lim_{r\to0}\underline{\varphi}(r)=\infty$. From (\ref{eq104}), we obtain $\varphi_L(r)<\underline{\varphi}(r) \ \mbox{for} \ 0<r<\infty$. This is a contradiction by Proposition \ref{prop7} (iv). Therefore the solution $w$ to (\ref{eq101}) with initial data $w_0$ blows up in finite time. \\
(ii) Since $\varphi_L$ is the stationary solution to (\ref{ee}) and $w_0$ satisfies  assumption (\ref{assumption}), we obtain 
$$
w(y,s)\leq \varphi_L(|y|), \quad y\in{\mathbb{R}^N}, \ s>0
$$
by the comparison principle. Therefore the solution $w$ to (\ref{eq101}) exists globally in time.
\end{proof}

\begin{proof}[Proof of \rm{Theorem \ref{main theorem}}]
(i) The proof is carried out by contradiction argument. Assume to contrary that $u$ exists globally in time. By the comparison principle, $u(x,t)>u_L(x,t_0+t) \   x\in{\mathbb{R}^N}, \ t>0.$ Hence assume that $u_0(x)>u_L(x,t_0) \ \mbox{for} \ x\in{\mathbb{R}^N}$. Then we have
\begin{equation}\label{thm5.1}
\log{t_0}+u_0(\sqrt{t_0}x)>\varphi_L(|x|), \quad x\in{\mathbb{R}^N}.
\end{equation}
Let ${w}(x,t)=\log{t_0}+u(\sqrt{t_0}x,t_0t)$. Then ${w}$ satisfies the following:
\begin{equation}\label{thm5.2}
{w}_t=\Delta{{w}}+e^{{w}} \quad \mbox{in} \ \mathbb{R}^N\times(0,\infty), \quad {w}(x,0)=\log{t_0}+u_0(\sqrt{t_0}x) \quad \mbox{in} \ \mathbb{R}^N.
\end{equation}
Put $$\hat{w}(y,s)=\log{(t+1)}+{w}(x,t), \quad y=\frac{x}{\sqrt{1+t}}, \quad s=\log{(1+t)}.
$$ 
Then $\hat{w}$ is a global solution to (\ref{eq101}) satisfying $\hat{w}_0(y)={w}(y,0) \ \mbox{for} \ y\in{\mathbb{R}^N}.$ From  (\ref{thm5.1}) and (\ref{thm5.2}), we have $\hat{w}_0(y)>\varphi_L(|y|) \ \mbox{for} \ y\in{\mathbb{R}^N}.$ By proposition \ref{prop9} (i), The solution $\hat{w}$ to (\ref{eq101}) blows up in finite time. This is contradiction. Therefore $u$ blows up in finite time. \\
(ii) Assume that $u_0(x)<u_L(x,t_0) \ \mbox{for} \ y\in{\mathbb{R}^N}$ by the comparison principle. Then we have
\begin{equation}\label{thm5.2-1}
\log{t_0}+u_0(\sqrt{t_0}x)<\varphi_L(|x|) \quad \mbox{for} \ x\in{\mathbb{R}^N}.
\end{equation}
Put $w(x,t)=\log{t_0}+u(\sqrt{t_0}x,t_0t)$. Then ${w}$ satisfies (\ref{thm5.2}).
Let
$$
\hat{w}(y,s)=\log{(t+1)}+{w}(x,t), \quad y=\frac{x}{\sqrt{1+t}}, \quad s=\log{(1+t)}.
$$
Then $\hat{w}$ is the solution to (\ref{eq101}) satisfying $\hat{w}_0(y)={w}(y,0) \ \mbox{for} \ y\in{\mathbb{R}^N}.$ From (\ref{thm5.2-1}), we obtain $w_0(y)<\varphi_L(|y|) \  \mbox{for} \ y\in{\mathbb{R}^N}.$
By proposition \ref{prop9} (ii), $\hat{w}$ exists globally. Therefore $w$ exists globally.
\end{proof}

\section*{Acknowledgment}
The author would like to express his gratitude to professor Mitsuru Sugimoto for valuable advice and continuous encouragement. He would like to offer my special thanks to Professor Yuki Naito for his helpful comment. He has had the support and continuous encouragement. The author would like to thank Sugimoto's office members for useful discussions. The author also thank Mr. Soichiro Suzuki in Sugimoto's office members for supporting my research activities.

\section*{References}
\vspace{-0.3cm}

\end{document}